\definecolor{greencite}{rgb}{0.2,0.6,0.2}
\definecolor{bluformula}{rgb}{0.1,0.2,0.6}
\definecolor{rosso}{rgb}{1,0,0}
\theoremstyle{plain}
\newtheorem{theorem}{Theorem}
\newtheorem{theorem*}{Theorem}
\newtheorem{lemma}{Lemma}
\newtheorem{proposition}{Proposition}
\newtheorem{corollary}{Corollary}
\theoremstyle{definition}
\newtheorem{definition}{Definition}
\theoremstyle{plain}
\newtheorem{remark}{Remark}
\newtheorem*{remark*}{Remark}
\numberwithin{equation}{section}
\def\e{{\epsilon}}
\def\ni{\noindent}
\def\mn{\medskip\noindent}
\def\lb{\left|}
\def\rb{\right|}
\def\ov{\overline}
\def \tr {\mathop {\rm tr}}
\def\le{\leq}
\def\lc{\left[}
\def\si3{\sum_{i=1}^3}
\def\rpt{\right.}
\def\O{\Omega}
\def\ov{\overline}
\def\e{\varepsilon}
\def\a{\alpha}
\def\b{\beta}
\def\l{\lambda}
\def\a{\alpha}
\def\b{\beta}
\def\div{{\rm div}}
\def\bfdiv{{\rm \bf div}} 
\def\lp{\left(}
\def\rp{\right)}
\def\la{\left\{}
\def\ra{\right\}}
\def\build#1_#2^#3{\mathrel{\mathop{\kern 0pt #1}\limits_{#2}^{#3}}}
\def\lime{\lim_{\e\to 0}} 
\def\bfa{\pmb{a}} 
\def\bfb{\pmb{b}}
\def\bfe{\pmb{e}}  
\def\bff{\pmb{f}}  
\def\bfg{\pmb{g}}  
\def\bfh{\pmb{h}}
\def\bfn{\pmb{n}}
\def\bfr{\pmb{r}}
\def\bfu{\pmb{u}}  
\def\bfv{\pmb{v}}  
\def\bfw{\pmb{w}}  
\def\bfx{\pmb{x}}   
\def\bfy{\pmb{y}}
\def\bfA{\pmb{A}} 
\def\bfB{\pmb{B}} 
\def\bfC{\pmb{C}} 
\def\bfD{\pmb{D}} 
\def\bfE{\pmb{E}}
\def\bfI{\pmb{I}}
\def\bfR{\pmb{R}} 
\def\bfT{\pmb{T}}
\def\A{{\mathcal A}}
\def\B{{\mathcal B}}
\def\C{{\mathcal C}}
\def\D{{\mathcal D}}
\def\F{{\mathcal F}}
\def\H{{\mathcal H}}
\def\L{{\mathcal L}}
\def\M{{\mathcal M}}
\def\P{{\mathcal P}}
\def\NN{\mathbb{N}}
\def\RR{\mathbb{R}}  
\def\SS{\mathbb{S}}
\def\bfgamma{{\pmb{  \gamma}}}
\def\bfsigma{{\pmb{ \sigma}}}
\def\bfpsi{{\pmb{  \psi}}}
\def\bfvarphi{{\pmb{ \varphi}}}
\def\bfGamma{\pmb{  \Gamma}}
\def\bfXi{\pmb{ \Xi}}
\def\bfUpsilon{\pmb{ \Upsilon}}
\def\bfPsi{\pmb{  \Psi}}
  \def\bfnabla{\pmb{  \nabla}}
\def\?{{\bf ???}}
\def\rrrrightharpoonup{\relbar\!\!\relbar\!\!\relbar\!\!\relbar\!\!\relbar\!\!\relbar\!\!\relbar\!\!\relbar\!\!\rightharpoonup}
\def\rrrrightarrow{\relbar\!\!\relbar\!\!\relbar\!\!\relbar\!\!\relbar\!\!\relbar\!\!\relbar\!\!\relbar\!\!\rightarrow}
\def\rpt{\right.}
\def\intb{{\int\!\!\!\!\!\!-}}
\def\disp{\displaystyle}
\def\.{\cdot}
\newcommand\ep{\varepsilon}
\newcommand\pd[2]{\tfrac{\displaystyle\partial #1}{\displaystyle\partial #2}}
\begin{document}

\bibliographystyle{plain}

\title[]{Asymptotic analysis of stratified elastic media in the space of functions with bounded deformation}


\author{Michel Bellieud}
\address{LMGC (Laboratoire de M\'ecanique et de G\'enie Civil de Montpellier)
UMR-CNRS 5508, Universit\'e Montpellier II, Case courier
048, Place Eug\`ene Bataillon, 34095 Montpellier Cedex 5, France}
\email{michel.bellieud@univ-montp2.fr}
\thanks{The first author was supported in part by NSF Grant \#000000.}

\author{Shane Cooper}
\address{ Department of Mathematical Sciences, University of Bath, Claverton Down, Bath, BA2 7AY, UK}
\email[Corresponding author]{s.a.l.cooper@bath.ac.uk}

\subjclass[2000]{}

\date{}


\keywords{Elasticity, Functions with bounded deformation}

\begin{abstract}
  We consider a heterogeneous  elastic structure which is stratified in some direction.
  We derive the limit problem under the assumption that 
 the Lam\'e coefficients  and their inverses weakly* converge to Radon measures. 
 Our  method  applies also to linear second-order elliptic systems of partial differential equations and in particular, for the case $d=1$, this addresses the previously open problem of determining the asymptotic behaviour in this context for the  general anisotropic heat equation.
\end{abstract}

\maketitle


\section{Introduction}
\label{intro}
 
 The deformation $\bfu$ of an elastic composite subjected to an external force $\bff$ is described by 
 $$
 -  \bfdiv\big(\bfsigma_\e(\bfu) \big) = \bff,
 $$
 where the parameter $\e$ highlights the dependence of the stress $\bfsigma_\e$  on the underlying composite.
 Normally this parameter is small; for example in the case of a composite with a periodic micro-structure, $\e$ is the period. As such, asymptotic analysis can be employed to determine the effective description of the deformation in the limit of the parameter $\e$. There are various approaches to such a study and they all essentially can be  broken down into the following key steps:
 \begin{enumerate}[(a)]
 	\item{To determine the leading-order asymptotic behaviour of $\bfu_\e$ and subsequently characterise the relationship of this limit with that of the stress $\bfsigma_\e(\bfu_\e)$.}
 	\item{To identify the limit or ``effective" problem that the leading-order limit of $\bfu_\e$ satisfies.}
 \end{enumerate}
 Naturally, an additional physically important challenge should be addressed after identifying the limit problem:
 \begin{enumerate}[(c)]
 	\item{To establish error estimates between $\bfu_\e$ and  the solution of the limit problem,}
 \end{enumerate} 
 but this interesting study is outside the scope of the  article, and in full generality the sophistication of the approach needed in establishing c) heavily depends on the regularity of $\bff$.
 
 A classical study which provides the asymptotic description of deformation for uniformly bounded stress has been known for several decades, see for example \cite{OlYoSh}. The relationship between such models and the study of traditional composites has been extensively explored. At the turn of the century, increasingly exotic composite materials which pertain to non-local,  
 {\color{black}   memory \cite{BeGr, FeKh, Sa1}, or higher-order  effects 
   \cite{BeSiam,BeArma, BeBo2},} negative effective {\color{black} density} \cite{ZhPa} \cite{MiWi},  and novel wave effects such as directional propagation and cloaking \cite{AvGrMiRo}, \cite{VS:MoM}, were studied. These works explore particular examples of  `high-contrast' periodic composites where the ratio between the constituent's material parameters is unbounded in $\ep$.  The correct mathematical framework to study such materials requires one to relax the uniformity assumption on $\bfsigma_\e$. 
To highlight the challenges such an undertaking presents, in terms of a) and b), we focus our attention on a particular variational approach  adopted herein: the so-called energy method, first introduced by Luc Tartar \cite{TaP}.

With regards to a) the energy method involves establishing that the sequences of solutions  $(\bfu_\e)$ and stresses $(\bfsigma_\e(\bfu_\e))$  are bounded with respect to some norm (usually the $L^2$-norm in linear problems) and thus converge in a suitable topology.
The correct choice of topology to use is a subtle affair which relies on (usually formal) a priori information about the leading-order asymptotic behaviour of $\bfu_\e$. For example in the classical case where $\bfsigma(\bfu_\e)$ is uniformly bounded, the correct topology is the standard weak topology. Yet, in high-contrast periodic problems where $\bfsigma(\bfu_\ep)$ is unbounded, the correct notion of convergence is two-scale convergence, see  \cite{Al, Ng,Zh1}. Upon identifying the limits of $\bfu_\ep$ and $\bfsigma(\bfu_\e)$ with respect to the appropriate topology, the characterisation of the functional relationship $\lim_{\e} \bfsigma_\e(\bfu_\e) = \mathcal{F}( \lim_{\e}\bfu_\e)$ is a serious one and is related to the issue of determining the limit for products of weak* convergent sequences. While in general this may not be possible, in certain cases this is attainable; in the classical linear elliptic setting this relies on the ``Compensated Compactness Lemma" of Murat-Tatar \cite{Mu,MuTa}; in the high-contrast case, and more recent  partial high-contrasts, this requires establishing generalised Weyl-type decompositions, see \cite{IKVS}. 

With regards to b) once establishing $\bfu = \lim_\ep\bfu_\e$ and identifying $\F$, the energy method aims {\color{black} at    determining} the effective problem for $\bfu$  by passing to the limit in variational problem for $\bfu_\e$ using suitable test fields that asymptotically {\color{black}  behave} like $\bfu_\e$. This requires establishing a ``strong approximability" result about the limit space to which $\bfu$ belongs:  every element of the limit space is the strong limit, with respect to underlying convergence established in a), of elements from the space to which $\bfu_\e$ belongs. In the classical case this result is automatic as the solution $\bfu_\e$ and limit $\bfu$ are elements of the same function space. In the more general high-contrast setting these spaces may vary and such a statement needs to be established.

The general high-contrast theory of elliptic problems is still lacking, particularly in the case of non-periodic composites where some preliminary work has been done in the case of stratified materials, i.e. scalar elliptic partial differential equations with coefficients that depend only on one variable, see 
 \cite{BeStrat}, \cite{BoPi},  \cite{GuHeMo} and references therein. This article further develops the theory in the direction of elliptic systems.
 
 In particular, we study the asymptotic behaviour of stratified isotropic elastic media, occupying a cylindrical domain {\color{black} $\Omega=(0,L)\times \Omega'$} with clamped  boundary (Dirichlet boundary conditions), with  high-contrast  Lam\'{e} coefficients; the stress is of the form
 \begin{equation*}
 \begin{aligned}
 \bfsigma_\ep(\bfu_\e) =
 \lambda_\ep(x_1) {\rm tr}(\bfe(\bfu_\e)) \bfI + 
 2\mu_\e(x_1) \bfe(\bfu_\e),
 \end{aligned}
 \end{equation*}
 for $\lambda_\e$ proportional to  {\color{black} $\mu_\e \in L^\infty(0,L)$} ($\lambda_\ep = l \mu_\e$, $l\ge0$). The shear modulus  $\mu_\e$ and its inverse $\mu^{-1}_\e$ are assumed to be bounded in {\color{black} $L^1(0,L)$}  and converging, as $\ep$ tends to zero, to Radon measures $\nu$ and $m$, respectively, which share no common atoms, see \eqref{mueto}, \eqref{nocommonatom} and Remark \ref{remcommon}. When investigating the asymptotic behaviour of the deformation $\bfu_\ep$ the first crucial thing to note is that the atoms of $\nu$ are points where discontinuities in $\bfu_\e$ will appear. This informs us (at least formally) that the infinitesimal strain $\bfE\bfu$ of the limit function $\bfu = \lim_\e \bfu_\e$ will necessarily be a measure {\color{black} with a  non-vanishing singular part with respect to the Lebesgue measure}, that is $\bfu$ should be a function with bounded deformation ({\color{black} i.e.   belong}  to $BD(\Omega)$, see \eqref{defBD}), and the jump set of $\bfu$ should coincide with the atoms of $\nu$. As such, we expect  $\bfE\bfu$ to be absolutely continuous with respect to  $\nu\otimes\L^2$:
 $$
 \int_B \bfE\bfu = \int_B 
 \frac{\bfE\bfu}{\nu\otimes\L^2}
 \ d \nu\otimes\L^2 \qquad  \text{for Borel sets $B$},
 $$
 where $ \frac{\bfE\bfu}{\nu\otimes\L^2}
 $ denotes the Radon-Nikod\'{y}m derivative of $\bfE\bfu$ with respect to $\nu\otimes\L^2$. On the other hand,  as the medium is stratified 
 we expect deformations  {\color{black} transverse} to the material layers to remain regular, even within  those layers contracting to the atoms of $m$. More precisely,  we determine that under the influence of an external body force $\bff\in L^\infty(\Omega)$,  the deformation $\bfu_\e \in H^1_0(\Omega;\RR^3)$ weakly* converges in $BD(\Omega)$ to some $\bfu$ which belongs to the space
 
 \begin{equation*}
 \hskip-0,2cm  \begin{aligned} 
 &
 BD^{\nu,m}_0(\Omega) := \la  \bfvarphi \in BD(\Omega)\lb
 \begin{aligned}
 &  \bfE\bfvarphi \ll \nu\otimes\L^2, \ \frac{\bfE\bfvarphi}{\nu\otimes\L^2} \in L^2_{\nu\otimes\L^2}(\Omega;\SS^3)
 \\
 & \varphi^\star_\a\in L^2_{m }(0,L; H^1_0(\Omega';\RR^3)) \quad \a\in \{2,3\}  
 \\&\bfvarphi=0 \ \hbox{ on } \ \partial\Omega
 \end{aligned}\rpt\ra,
 \end{aligned} 
 \end{equation*} 
 which we demonstrate is a Hilbert space when equipped with the norm
 $$
 \left|\left|\bfvarphi\right|\right|_{BD^{\nu,m}_0(\Omega)}:= \lp  \int_{\Omega}  |\frac{\bfE\bfvarphi}{\nu\otimes\L^2} |^2 \ d\nu\otimes\L^2\rp^{\frac{1}{2}}+  \lp\int_\Omega |\bfe_{x'}(\bfvarphi^\star)|^2 dm\otimes\L^2\rp^{\frac{1}{2}}.
 $$
 Here $\bfu^\star$ is the precise representative of $\bfu$, see \eqref{defprecise}, which is necessarily introduced as $m$ {\color{black} may have} atoms and $\bfu$ is only defined up to a set of Lesbesgue measure zero.
 
 For the asymptotics of the stress tensor $\bfsigma(\bfu_\ep)$, there are two principle types of behaviour exhibited. Roughly, by the continuity of stresses across interfaces we argue that the components of stress $\bfsigma(\bfu_\ep) \cdot \bfe_1$ must asymptotically behave well,  even at atoms of $\nu$ where $\bfu_\ep$ becomes discontinuous. Similarly, as hinted at above, the components of strain $e_{\e\a\b}(\bfu_\ep)$, for $\a,\b = 2,3$, remain well behaved, even at the atoms of $m$. Indeed, we demonstrate that
{\color{black} $\mu_\e^{-1} \bfsigma_\e(\bfu_\e) \cdot \bfe_1$
  weakly* converges in $\M(\overline{\Omega}; \RR^3)$ (the space of
    $\RR^3$-valued  Radon measures on $\overline{\Omega}$)  to $\Big(l\tr(\frac{\bfE\bfu}{\nu\otimes\L^2}\bfe_1 + 2(\frac{\bfE\bfu}{\nu\otimes\L^2})\bfe_1\Big) \nu\otimes\L^2$}
  while $\mu_\ep e_{\ep\a\b}(\bfu_\e)$  weakly* converges in $\M(\overline{\Omega})$ to $e_{\a\b}(\bfu^\star) m\otimes\L^2 $.

 To determine the problem {\color{black}  which } $\bfu$ solves, we pass to the limit in the variational equality
 \begin{equation*}
 \int_\Omega \bfsigma_\e (\bfu_\ep) \cdot \bfe(\bfvarphi_\e) \ dx = \int_\Omega \bff \cdot \bfvarphi_\e \ dx,
 \end{equation*}
 by constructing a suitable test field $\bfvarphi_\e \in H^1(\Omega;\RR^3)$ which asymptotically behaves like $\bfu_\e$. Here there are two main considerations. First, one must address the apparent issue of  the limit of products of terms $\sigma_{\e1i}(\bfu_\e) $ and $e_{\a\b}(\bfvarphi_\e)$ since a priori results state they converge to elements of different measure spaces, namely
 {\color{black}  $\{ g( \nu\otimes\L^2), \ g\in  L^2_{\nu\otimes\L^2}\}$ and  $\{ h( m\otimes \L^2), h\in L^2_{m\otimes\L^2}\}$ } respectively. To overcome this, we rearrange $\bfsigma(\bfu_\ep) : \bfe(\bfvarphi_\ep)$ in terms of `good' products whose factors mutally converge in the same space, i.e. we determine 
 $$
 \bfsigma_\e (\bfu_\ep) : \bfe(\bfvarphi_\e) = \mu_\e \bfa^{\bot}\bfe(\bfu_\e) : \bfe(\bfvarphi_\e) + \mu_\e \bfa^\Vert\bfe(\bfu_\e) : \bfe(\bfvarphi_\e)
 $$
 where the matrices $\bfa^\bot$, $\bfa^\Vert$ are given by \eqref{defabotaVert}.
 With  $\bfsigma_\e (\bfu_\ep) : \bfe(\bfvarphi_\e)$ written in terms of products that converge in the same sense, we must address the issue of identifying these limits. This requires establishing the ``strong approximability" of the limit space:  each element of $BD^{\nu,m}_0(\Omega)$ has a sequence of fields $\bfvarphi_\ep$, suitably smooth, (belonging to $H^1$) such that $\bfvarphi_\ep$, $\sigma_{\e1i}(\bfvarphi_\e)$ and $e_{\a\b}(\bfvarphi_\e)$ strongly convergence in an appropriate sense (see Definition \ref{defcvtet}) to their analogous limits, this result is Proposition \ref{propvarphie}. The main result of the article is as follows.
 
 \begin{theorem*}
 	The solution sequence $(\bfu_\e)$ is weak* convergent in $BD(\Omega)$ 
 	and its limit $\bfu \in BD^{\nu,m}_0(\Omega)$ is a solution to 
\begin{multline*}
 \int_\O \bfa^\bot\tfrac{\bfE\bfu}{\nu\otimes\L^2} : \tfrac{\bfE\bfvarphi}{\nu\otimes\L^2}   d\nu\otimes\L^2 
 +\int_\O \bfa^\Vert \bfe_{x'}(\bfu^\star):\bfe_{x'}(\bfvarphi^\star)  dm\otimes\L^2 = \int_{\Omega} \bff \cdot \bfv\\ \qquad \forall \bfv \in BD^{\nu,m}_0(\Omega),
\end{multline*}
 Furthermore, such a solution is unique.
 \end{theorem*}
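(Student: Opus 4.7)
The plan is to follow the energy method. First I would derive uniform a priori estimates by testing the variational equation against $\bfu_\e$ itself: the positivity of the Lam\'e tensor (ensured by $\lambda_\e=l\mu_\e$ with $l\ge 0$) yields
\[
2\int_\Omega \mu_\e |\bfe(\bfu_\e)|^2\,dx \le \int_\Omega \bff\cdot \bfu_\e\,dx.
\]
Combining this with Cauchy--Schwarz, a suitable Korn--Poincar\'e inequality (exploiting the clamped boundary on the $x_1$-transverse faces of $\Omega=(0,L)\times\Omega'$) and the $L^1(0,L)$ bound on $\mu_\e^{-1}$, one obtains a uniform bound of $\bfu_\e$ in $L^1(\Omega;\RR^3)$ and of $\bfe(\bfu_\e)$ in $L^1(\Omega;\SS^3)$. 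Hence $(\bfu_\e)$ is bounded in $BD(\Omega)$, and a subsequence converges weak$^\star$ to some $\bfu\in BD(\Omega)$.

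Next I would identify the limit space. Writing the strain as the product $\bfe(\bfu_\e)=\mu_\e^{-1/2}\cdot \sqrt{\mu_\e}\,\bfe(\bfu_\e)$ and passing to the limit, together with the convergences $\mu_\e\weakstarconverge \nu$ and $\mu_\e^{-1}\weakstarconverge m$, yields $\bfE\bfu\ll \nu\otimes\L^2$ with Radon--Nikod\'ym density in $L^2_{\nu\otimes\L^2}(\Omega;\SS^3)$. The tangential regularity $\varphi^\star_\a\in L^2_m(0,L;H^1_0(\Omega';\RR^3))$ for $\a\in\{2,3\}$ follows by the same analysis applied to the transverse strains $e_{\e\a\b}(\bfu_\e)$; the no-common-atoms assumption \eqref{nocommonatom} decouples concentrations on atoms of $\nu$ from those on atoms of $m$. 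The Dirichlet boundary condition is preserved under weak$^\star$ convergence, so $\bfu\in BD^{\nu,m}_0(\Omega)$.

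The central step is passage to the limit in the variational equality. For an arbitrary $\bfv\in BD^{\nu,m}_0(\Omega)$, I would invoke the strong approximability result (Proposition \ref{propvarphie}) to produce $\bfvarphi_\e\in H^1(\Omega;\RR^3)$ such that $\bfvarphi_\e$, together with $\mu_\e^{-1}\sigma_{\e 1i}(\bfvarphi_\e)$ and $\mu_\e e_{\e\a\b}(\bfvarphi_\e)$, converge strongly (in the sense of Definition \ref{defcvtet}) to $\bfv$ and to the corresponding limit objects. Using the decomposition
\[
\bfsigma_\e(\bfu_\e):\bfe(\bfvarphi_\e) = \mu_\e\bfa^\bot\bfe(\bfu_\e):\bfe(\bfvarphi_\e) + \mu_\e\bfa^\Vert\bfe(\bfu_\e):\bfe(\bfvarphi_\e),
\]
the first summand rearranges into a pairing of factors both compatible with the $\nu\otimes\L^2$-topology, while the second is compatible with the $m\otimes\L^2$-topology. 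In each measure space the strong--weak$^\star$ duality lets one pass to the limit and produces the two integrals in the claimed bilinear form. The right-hand side converges since $\bfvarphi_\e\to\bfv$ strongly in $L^1(\Omega;\RR^3)$ and $\bff\in L^\infty(\Omega;\RR^3)$.

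The main obstacle will be this matched-pair limit passage: the limit measures $\nu\otimes\L^2$ and $m\otimes\L^2$ are generally mutually singular on their atom sets, so one must carefully isolate the $\nu$- and $m$-concentrated contributions using \eqref{nocommonatom}, which guarantees that at any atomic concentration precisely one of $\sqrt{\mu_\e}$ or $1/\sqrt{\mu_\e}$ degenerates. Once the limit equation is established, uniqueness follows from the Lax--Milgram theorem: the bilinear form is continuous and coercive on the Hilbert space $BD^{\nu,m}_0(\Omega)$, with coercivity inherited from the ellipticity of $\bfa^\bot$ and $\bfa^\Vert$ as the relevant partial restrictions of the Lam\'e tensor. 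Uniqueness of the limit problem then promotes subsequential weak$^\star$ convergence to convergence of the entire sequence $(\bfu_\e)$.
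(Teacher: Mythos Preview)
Your overall strategy matches the paper's, but two claimed steps are not actually justified and would fail as stated.

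First, you assert that ``the Dirichlet boundary condition is preserved under weak$^\star$ convergence.'' It is not: the trace operator on $BD(\Omega)$ is \emph{not} weak$^\star$ continuous, as the paper explicitly remarks. The paper recovers $\bfu=0$ on $\partial\Omega$ by passing to the limit in the integration-by-parts identity $\int_\Omega \bfe(\bfu_\e):\bfPsi\,dx = -\int_\Omega \bfu_\e\cdot\bfdiv\bfPsi\,dx$ for $\bfPsi\in C^\infty(\ov\Omega;\SS^3)$ and comparing with Green's formula in $BD(\Omega)$; the boundary term is then forced to vanish. A separate argument of the same flavour is needed for the transverse boundary condition $(\bfu^\star)'=0$ on $(0,L)\times\partial\Omega'$ in the $m\otimes\H^1$ sense.

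Second, and more seriously, your coercivity claim ``inherited from the ellipticity of $\bfa^\bot$ and $\bfa^\Vert$'' does not hold: neither tensor is elliptic on all of $\SS^3$. From \eqref{reorg} one has $\bfa^\bot\bfXi:\bfXi = \tfrac{1}{l+2}(\sigma^\nu_{11})^2 + (\sigma^\nu_{12})^2 + (\sigma^\nu_{13})^2$, which vanishes on matrices with $\Xi_{1i}=0$ and $\tr\bfXi=0$; it gives no control over $\Xi_{22},\Xi_{33},\Xi_{23}$. Likewise $\bfa^\Vert$ sees only the $2\times2$ transverse block. Coercivity of $a(\cdot,\cdot)$ with respect to the norm \eqref{defnormBDnum0} therefore requires the cross-inequality
\[
\int_\Omega \Big|\tfrac{E_{\a\b}\bfv}{\nu\otimes\L^2}\Big|^2\,d\nu\otimes\L^2 \le \int_\Omega |e_{\a\b}((\bfv^\star)')|^2\,dm\otimes\L^2,\qquad \a,\b\in\{2,3\},
\]
which is Lemma~\ref{lemacoercive}. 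Its proof is not elementary: it uses the very recovery sequences of Proposition~\ref{propvarphie} together with the lower-semicontinuity and strong-limit statements of Lemma~\ref{lemfeps}, applied first to $\bfv^\delta$ and then sent $\delta\to0$. Without this lemma the bilinear form controls only part of the $BD^{\nu,m}_0$ norm and Lax--Milgram does not apply.

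A smaller point: Proposition~\ref{propvarphie} does not produce approximants converging to an arbitrary $\bfv\in BD^{\nu,m}_0(\Omega)$ directly, but to its partial mollification $\bfvarphi=\bfv^\delta$. The paper first obtains $a(\bfu,\bfv^\delta)=\int_\Omega\bff\cdot\bfv^\delta$ and then lets $\delta\to0$ using Proposition~\ref{propmol}; you should make that intermediate step explicit.
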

In the particular situation where the Radon measures $\nu$, $m$ have no Cantor part the above weak variational form admits a strong PDE form, see Corollary \ref{cor:strongform}.

 We conclude the introduction by remarking that our method outlined above is not restricted to the case of isotropic linear elasticity. This result is applicable to linear systems of the form 
 $$
 -\div{}{\big( \mu_\e \hbox{\rm\bfC\,}  \nabla \bfu_\e \big)} = \bff,
 $$
 where $\Omega\subset \RR^n$ is a cylindrical domain, $\bfu \in H^1_0(\Omega; \RR^n)$,
 $\hbox{\rm\bfC\,} $ is a symmetric definite positive second order tensor on $\RR^{n+d}$ 
    such that the $n\times n$ matrix $T_{ij} =  \hbox{\rm C\,}_{i1j1}$ is invertible, see Remark \ref{remsystems}. The limit functions here will be of the, more regular, class of functions with bounded variation. For the case $d=1$, this addresses the previously open problem of determining the asymptotic behaviour in this context for the  general anisotropic heat equation. The key to transferability of the method is that under such assumptions on  $\hbox{\rm\bfC\,} $ one can perform a rearrangement of  $\hbox{\rm\bfC\,} \bfnabla \bfu : \bfnabla \bfvarphi$ similar to that for $\bfsigma_\e(\bfu) : \bfe(\bfvarphi)$. It is interesting to note that no such rearrangement can be performed for the case of fully anisotropic elasticity and as such our method does not apply.

\section{Notations}
\label{Secnot}
 
In this article, $\{ \bfe_1, \bfe_2, \bfe_3\}$ stands for the canonical basis of   $\RR^3$.
Points in $\RR^3$ 
and real-valued functions  are represented by symbols beginning with a  lightface lowercase  (example $x, i, \tr \bfA,\ldots$) while vectors and vector-valued functions by symbols
beginning in boldface lowercase (examples:
$\bfu$,
$\bff$,
$\bfdiv  \bfsigma_\e $, \ldots). Matrices and matrix-valued functions are represented  by symbols beginning in boldface uppercase
with the following exceptions:
$\bfnabla\bfu$ (displacement gradient), $\bfe(\bfu)$ (linearized strain tensor).
We denote by $u_i$ or
$(\bfu)_i$  the  components of a vector $\bfu$ and   by $A_{ij}$ or $(\bfA)_{ij}$ those of a matrix $\bfA$ (that is $\bfu=
\sum_{i=1}^3 u_i \bfe_i =\sum_{i=1}^3 (\bfu)_i
\bfe_i$; $\bfA=\sum_{i,j=1}^3 A_{ij} \bfe_i\otimes\bfe_j =\sum_{i,j=1}^3 (\bfA)_{ij} \bfe_i\otimes\bfe_j $, where $\otimes$ stands for  the tensor product). 
For any two vectors $\bfa$, $\bfb$ in $\RR^3$, the symmetric product $\bfa\odot \bfb$ is the symmetric $3\times 3$ matrix defined by $\bfa\odot \bfb:=\frac{1}{2}(\bfa\otimes \bfb+\bfb\otimes\bfa)$. 
 We do not employ  the usual repeated index
convention for summation.    We denote 
by $\bfA\!:\!\bfB=\sum_{i,j=1}^3 A_{ij}B_{ij}$ the   inner
product of two matrices,
  by $\SS^3$ 
   the set of all real symmetric matrices of order
$3$, by $\bfI$   the $3\times 3$ identity matrix.  
 We denote by $\L^n$ the Lebesgue measure in $\RR^n$ and by $\H^k$  the $k$-dimensional Hausdorff measure. 
The letter
$C$ denotes constants  whose precise values  may vary from line to line.
Let $\Omega:=(0,L)\times \O'$ be a  connected cylindrical  open Lipschitz subset    of $\RR^3$. For any  $\bfvarphi\in L^1_{loc}(\Omega;\RR^3)$, 
we denote by $\bfvarphi^\star$ its precise representative, that is 

	\begin{equation}
	\label{defprecise}
	\bfvarphi^\star(x) = \left\{ \hspace{5pt}
	\begin{array}{lcl}
	\displaystyle 
	\lim_{r\rightarrow 0} 
	\intb_{B_r(x)} \bfvarphi(y) \ dy & &  \text{if this limit exists,} \\[5pt] 0 & &  \text{otherwise,}
	\end{array}
	\right.
	\end{equation}

where $B_r(x)$ is  the open ball of radius $r$ centered at $x$, and $\intb_{B_r(x)} \bfvarphi(y) \ dy := \tfrac{1}{\L^3(B_r(x))} \int_{B_r(x)} \bfvarphi(y) \ dy$.
%
%
%
{\color{black}
We also set
\begin{equation}
	\label{defhalfprecise}
	\bfvarphi^\pm(x) = \left\{ \hspace{5pt}
	\begin{array}{lcl}
	\displaystyle 
	\lim_{r\rightarrow 0}
	\intb_{B^\pm_r(x)} \bfvarphi(y) \ dy & &  \text{if this limit exists,} \\[5pt] 0 & &  \text{otherwise,}
	\end{array}
	\right.
	\end{equation}

 where 
  
 \begin{equation}\label{B+-}
\begin{aligned}
&B^+_r(x) : =  B_r(x) \cap \big( (x_1, L) \times \Omega' \big),
\quad B^-_r(x) : =  B_r(x) \cap \big( (0,x_1) \times \Omega' \big).
\end{aligned}
\end{equation}

 The fields  $\bfvarphi^\star$  and  $\bfvarphi^\pm$  
are Borel-measurable and take the same values on the Lebesgue points of $\bfvarphi$, thus 
\begin{equation}\label{fistar=fiae}
\begin{aligned}
&\bfvarphi^\pm=\bfvarphi^\star=\bfvarphi \quad \L^3\hbox{-a.e. in } \ \O.
\end{aligned}
\end{equation}

}
We denote by  $\bfvarphi'$ the element of $L^1_{loc}(\Omega;\RR^3)$ defined by 
\begin{equation}
\begin{aligned} 
 &  \varphi'_1=0, \quad \quad \varphi'_\a=\varphi_\a \quad \forall \a\in \{2,3\},
\end{aligned} 
\label{defpsi'}  
\end{equation} 

\ni and  by $\widetilde\bfvarphi$ the extension of $\bfvarphi$ by $0$ into $\RR^3$.
  If $\varphi_2, \varphi_3$ admit weak  derivatives with respect to $x_2, x_3$, we set 

\begin{equation}
\begin{aligned} &  \bfe_{x'} (\bfvarphi):= \sum_{\a,\b=2}^3 \frac{1}{2}\lp {\partial \varphi_\a \over\partial x_\b }+{\partial \varphi_\b \over\partial x_\a }\rp \bfe_\a\otimes\bfe_\b.
\end{aligned} 
\nonumber
\end{equation}

The symbol      $\bfD\bfvarphi$    represents  the distributional 
 gradient of $\bfvarphi$
 and  $\bfE\bfvarphi:=\tfrac{1}{2}\lp \bfD\bfvarphi+  \bfD\bfvarphi^T\rp$   the  symmetric distributional gradient of $\bfvarphi$. The     space of  functions with bounded deformation on $\Omega$ is defined by
\begin{equation} 
\label{defBD}
   \begin{aligned}
  &  BD(\Omega):=  \la  \bfvarphi\in L^1(\Omega;\RR^3): \  \bfE\bfvarphi  \in \M(\Omega; \SS^3)\ra, 
   \end{aligned}
\end{equation}
\ni where $\M(\Omega; \SS^3)$ stands for  the space of $\SS^3$-valued   Radon measures on $\Omega$ with bounded total variation.
{\color{black} For any $x_1\in (0,L)$,  we set
\begin{equation}
\label{defSigmax1}
\begin{aligned}
  \Sigma_{x_1}:= \{x_1\}\times\O'. 
\end{aligned}
\end{equation} 

  The symbol  $\frac{\l}{\theta}$  represents  the Radon-Nikod\'ym density of  
a  (finite)  vector valued  Radon measure $\l$  on $\Omega$
 with respect to  a positive Radon measure  $\theta$ on $\Omega$.

 \section{Setting  of the problem and    results}\label{secresults} 
 Let $\Omega := (0,L)\times \Omega'$ be a  open bounded cylindrical Lipschitz domain of $\RR^3$. We  are interested in the asymptotic analysis of the solution $\bfu_\e$  to 

\begin{equation}
(\P_\e)\!:\!\la \begin{aligned}
      &\!- \bfdiv(\bfsigma_\e(\bfu_\e)  ) = \bff  \quad  \hbox{ in }    \Omega, \quad 
     \\ & 
   \!  \bfsigma_\ep(\bfu_\e) \!=\!
     \lambda_\ep(x_1) {\rm tr}(\bfe(\bfu_\e)) \bfI \!+ 
     2\mu_\e(x_1) \bfe(\bfu_\e), \quad  
      \bfe(\bfu_\e)\!= \!{1\over 2} (\bfnabla \bfu_\e +\!
 \bfnabla^T
\bfu_\e),
\\& \!\bfu_\e \in H^1_0(\Omega;\RR^3), \quad  \bff \in  L^\infty(\O, \RR^3),
 \end{aligned}
 \rpt
\label{Pe}
\end{equation}
 
when  the Lam\'{e} coefficients  $\l_\e$, $\mu_\e$ only depend on one   variable (say $x_1$) and 
  $\mu_\e, \mu_\e^{-1}$ belong to $L^\infty(0,L)$ and are bounded in $L^1(0,L)$. 
 More precisely, setting

 \begin{equation}
 \label{defmenue}
 \nu_\e:= \mu_\e^{-1} \L^1_{\lfloor [0,L]}; \qquad m_\e:= \mu_\e \L^1_{\lfloor [0,L]},
  \end{equation}
 
we make the following hypotheses:

\begin{equation}
\begin{aligned} 
 &\lambda_\e = l \mu_\e \quad (l \ge 0),  \qquad  \sup_{\e>0} \lp \lb\lb  \mu_\e \rb\rb_{L^1(0,L)}+\lb\lb   \mu_\e^{-1} \rb\rb_{L^1(0,L)}\rp<\infty,
 \\
&
 m_\e  \buildrel \star\over \rightharpoonup  m , 
 \qquad  
 \nu_\e
 \buildrel \star\over \rightharpoonup  \nu  \quad \hbox{ weakly* in } \ \M([0,L]).
  \end{aligned} 
\label{mueto}  
   \end{equation} 
   
We also suppose that  (see Remark \ref{remcommon})
   \begin{equation}
\begin{aligned} 
 &  m(\{t\})\nu(\{t\}) =0 \ \forall t\in [0,L], \quad  m(\{0\})\!=\!m(\{L\})\!=\!\nu(\{0\})\!=\!\nu(\{L\})\!=0.
 \end{aligned} 
\label{nocommonatom}   
   \end{equation} 
   
 Under these assumptions, we prove that  $\bfu_\e$ 
weakly* converges in $BD(\Omega)$ 
 to an element $\bfu$ of the space 

\begin{equation}
 \label{defBDnum0}
\hskip-0,2cm  \begin{aligned} 
 &
BD^{\nu,m}_0(\Omega) := \la  \bfvarphi \in BD(\Omega)\lb
\begin{aligned}
&  \bfE\bfvarphi \ll \nu\otimes\L^2, \  \tfrac{\bfE\bfvarphi}{\nu\otimes\L^2} \in L^2_{\nu\otimes\L^2}(\Omega;\SS^3)
 \\
 & \varphi^\star_\a\in L^2_{m }(0,L; H^1_0(\Omega')) \quad \a\in \{2,3\}  
\\&\bfvarphi=0 \ \hbox{ on } \ \partial\Omega
 \end{aligned}\rpt\ra,
 \end{aligned} 
   \end{equation}  
   
which, endowed with the norm   
   
\begin{equation}
 \label{defnormBDnum0}
\left|\left|\bfvarphi \right|\right|_{BD^{\nu,m}_0(\Omega)}:= \lp  \int_{\Omega}  |\tfrac{\bfE\bfvarphi}{\nu\otimes\L^2}   |^2 \ d\nu\otimes\L^2\rp^{\frac{1}{2}}+  \lp\int_\Omega |\bfe_{x'}( \bfvarphi  ^\star)|^2 dm\otimes\L^2\rp^{\frac{1}{2}},
    \end{equation}  

turns out to be  a Hilbert space. We prove  that   $\bfu$ satisfies  the variational   problem
 
\begin{equation}
\label{Peff}
(\P^{eff}): \la
\begin{aligned}
&
a(\bfu,\bfvarphi ) = \int_\Omega \bff \cdot \bfvarphi   \qquad \forall \bfvarphi  \in BD^{\nu,m}_0(\Omega),
\\& \bfu\in BD^{\nu,m}_0(\Omega),
\end{aligned} \rpt
\end{equation}


 
where 
 $a(\cdot,\cdot)$ is the  non-negative   symmetric bilinear form on  $BD^{\nu,m}_0(\Omega)$ defined  by 
 
 \begin{equation}
\label{defa}
\begin{aligned}
 a(\bfu,\bfvarphi) : =  &\int_\O \bfa^\bot\tfrac{\bfE\bfu}{\nu\otimes\L^2} : \tfrac{\bfE\bfvarphi}{\nu\otimes\L^2}   d\nu\otimes\L^2 
+\int_\O \bfa^\Vert \bfe_{x'}(\bfu^\star):\bfe_{x'}(\bfvarphi^\star)  dm\otimes\L^2, 
 \end{aligned}
\end{equation}

in terms of the fourth order  tensors $\bfa^\bot$ and $\bfa^\Vert$      given by 

\begin{equation}
\label{defabotaVert}
\begin{aligned}
&\bfa^\bot \bfXi :=  \begin{pmatrix}  l\tr\bfXi + 2\Xi_{11} & 2\Xi_{12}&2\Xi_{13}\\ 2\Xi_{12} & \tfrac{l^2}{l+2}\tr\bfXi + \tfrac{2l}{l+2}\Xi_{11}& 0
\\2\Xi_{13} &0& \tfrac{l^2}{l+2}\tr\bfXi + \tfrac{2l}{l+2}\Xi_{11}
\end{pmatrix} ,
\\& \bfa^\Vert \bfGamma :=  \tfrac{2l}{l+2}  \sum_{\b=2}^3\Gamma_{\b\b} \sum_{\a=2}^3 \bfe_\a\otimes\bfe_\a + 2\sum_{\a,\b=2}^3 \Gamma_{\a\b}\bfe_\a\otimes\bfe_\b  .
\end{aligned}
\end{equation}

We prove that $a(.,.)$ is continuous and coercive on $BD^{\nu,m}_0(\O)$, therefore \eqref{Peff} has a unique solution.

\begin{theorem}\label{th}
  The space $BD^{\nu,m}_0(\Omega)$ defined by \eqref{defBDnum0}, endowed with the norm \eqref{defnormBDnum0},   is a Hilbert space. Under the assumptions \eqref{mueto} and \eqref{nocommonatom}, the symmetric bilinear form $a(\cdot,\cdot)$   defined  by \eqref{defa} is coercive and continuous on $BD^{\nu,m}_0(\Omega)$.
The    solution  to (\ref{Pe}) 
weakly* converges in $BD(\Omega)$ 
to the unique solution    to  (\ref{Peff}).
\end{theorem}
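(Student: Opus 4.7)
The Hilbert-space structure of $BD^{\nu,m}_0(\Omega)$ is established by polarisation of the norm \eqref{defnormBDnum0}; positive-definiteness uses the Dirichlet condition combined with a Korn--Poincar\'e inequality on $BD(\Omega)$, and completeness is checked on a Cauchy sequence $(\bfvarphi_n)$: the two norm terms make $\bigl(\tfrac{\bfE\bfvarphi_n}{\nu\otimes\L^2}\bigr)$ Cauchy in $L^2_{\nu\otimes\L^2}(\Omega;\SS^3)$ and $(\bfe_{x'}(\bfvarphi_n^\star))$ Cauchy in $L^2_{m\otimes\L^2}$; Korn--Poincar\'e then forces $(\bfvarphi_n)$ to be Cauchy in $L^1(\Omega;\RR^3)$ and $(\bfE\bfvarphi_n)$ Cauchy in $\M(\Omega;\SS^3)$, and the limit function inherits the defining properties of $BD^{\nu,m}_0(\Omega)$ by standard measure-theoretic arguments and trace theory on precise representatives. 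Continuity of $a(\cdot,\cdot)$ follows by Cauchy--Schwarz using the explicit form of $\bfa^\bot,\bfa^\Vert$ in \eqref{defabotaVert}. For coercivity one computes directly $\bfa^\Vert\bfGamma{:}\bfGamma = 2|\bfGamma|^2+\tfrac{2l}{l+2}(\tr\bfGamma)^2$, which is positive definite on $2\times 2$ symmetric blocks, together with the analogous expansion of $\bfa^\bot\bfXi{:}\bfXi$; the combination of the two integrals controls the full norm, the assumption \eqref{nocommonatom} of disjoint atoms of $m$ and $\nu$ ensuring that the two contributions cover complementary directions of $\bfE\bfvarphi/\nu\otimes\L^2$.

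Existence and uniqueness of $\bfu_\e\in H^1_0(\Omega;\RR^3)$ solving \eqref{Pe} is classical; testing with $\bfu_\e$ yields the energy identity $\int_\Omega \bfsigma_\e(\bfu_\e){:}\bfe(\bfu_\e)\,dx=\int_\Omega \bff\cdot\bfu_\e\,dx$. Ellipticity of the isotropic Hooke tensor, Cauchy--Schwarz, and the $L^1$-bound on $\mu_\e^{-1}$ in \eqref{mueto} give a uniform bound on $\bfu_\e$ in $BD(\Omega)$, so $\bfu_\e\weakstarconverge\bfu$ in $BD(\Omega)$ along a subsequence. The a priori analysis sketched in the introduction identifies the weak$^\star$ limits
\[ \mu_\e^{-1}\bfsigma_\e(\bfu_\e)\cdot\bfe_1\weakstarconverge \Bigl(l\,\tr\bigl(\tfrac{\bfE\bfu}{\nu\otimes\L^2}\bigr)\bfe_1+2\tfrac{\bfE\bfu}{\nu\otimes\L^2}\bfe_1\Bigr)\nu\otimes\L^2 \]
in $\M(\overline{\Omega};\RR^3)$ and $\mu_\e e_{\e\a\b}(\bfu_\e)\weakstarconverge e_{\a\b}(\bfu^\star)\,m\otimes\L^2$ in $\M(\overline{\Omega})$, from which in particular $\bfu\in BD^{\nu,m}_0(\Omega)$.

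Given $\bfvarphi\in BD^{\nu,m}_0(\Omega)$, the strong-approximability result Proposition~\ref{propvarphie} supplies $\bfvarphi_\e\in H^1_0(\Omega;\RR^3)$ whose stresses $\mu_\e^{-1}\bfsigma_\e(\bfvarphi_\e)\cdot\bfe_1$ and in-plane strains $\mu_\e e_{\e\a\b}(\bfvarphi_\e)$ converge strongly, in the appropriate sense, to the corresponding objects for $\bfvarphi$. The algebraic rearrangement
\[ \bfsigma_\e(\bfu_\e){:}\bfe(\bfvarphi_\e) = \mu_\e\,\bfa^\bot\bfe(\bfu_\e){:}\bfe(\bfvarphi_\e) + \mu_\e\,\bfa^\Vert\bfe(\bfu_\e){:}\bfe(\bfvarphi_\e) \]
splits the bilinear form into two ``good products'' whose factors converge in matching dual measure spaces ($\nu\otimes\L^2$ versus $m\otimes\L^2$); passing to the limit in $\int_\Omega \bfsigma_\e(\bfu_\e){:}\bfe(\bfvarphi_\e)\,dx=\int_\Omega \bff\cdot\bfvarphi_\e\,dx$ then yields $a(\bfu,\bfvarphi)=\int_\Omega \bff\cdot\bfvarphi$, so $\bfu$ solves \eqref{Peff}; the coercivity of $a(\cdot,\cdot)$ supplies uniqueness and thereby upgrades subsequential convergence to convergence of the full sequence. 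The hardest step is clearly Proposition~\ref{propvarphie}: one must construct $H^1_0$ approximants that reproduce the singular concentrations of $\bfE\bfvarphi$ at the atoms of $\nu$ (where $\bfvarphi$ jumps across material layers) while simultaneously avoiding spurious contributions to the in-plane strain limit carried by $m$; assumption \eqref{nocommonatom} is precisely what decouples these two constructions.
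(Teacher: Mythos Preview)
Your overall architecture matches the paper's: a priori bounds, compactness in $BD(\Omega)$, identification of the two limit measures, a rearranged passage to the limit via Proposition~\ref{propvarphie}, and uniqueness from coercivity. Two points, however, need correction.

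\medskip
\textbf{Coercivity.} Your direct argument does not work. Computing from \eqref{defabotaVert} (or using \eqref{reorg}) one finds
\[
\bfa^\bot\bfXi:\bfXi=\tfrac{1}{l+2}\bigl((l+2)\Xi_{11}+l\Xi_{22}+l\Xi_{33}\bigr)^2+4\Xi_{12}^2+4\Xi_{13}^2,
\]
which is only positive \emph{semi}-definite on $\SS^3$: it does not see $\Xi_{23}$ at all, and it annihilates any $\bfXi$ with $\Xi_{1i}=0$ and $(l+2)\Xi_{11}+l(\Xi_{22}+\Xi_{33})=0$. Hence the first integral in $a(\bfv,\bfv)$ cannot by itself control $\int_\Omega|\tfrac{E_{\a\b}\bfv}{\nu\otimes\L^2}|^2\,d\nu\otimes\L^2$ for $\a,\b\in\{2,3\}$. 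The second integral lives in $L^2_{m\otimes\L^2}$, a \emph{different} measure space, so ``covering complementary directions'' is not a pointwise statement and the no-common-atom hypothesis \eqref{nocommonatom} does not supply the missing link in the way you suggest. The paper closes the gap with Lemma~\ref{lemacoercive},
\[
\int_\Omega\Big|\tfrac{E_{\a\b}\bfv}{\nu\otimes\L^2}\Big|^2\,d\nu\otimes\L^2\;\le\;\int_\Omega\big|e_{\a\b}(\bfv^\star)\big|^2\,dm\otimes\L^2\qquad(\a,\b\in\{2,3\}),
\]
whose proof is itself indirect: it applies the test-field construction of Proposition~\ref{propvarphie} to $\bfv^\delta$, uses the weak lower semicontinuity of Lemma~\ref{lemfeps}(ii) on $\mu_\e e_{\a\b}(\bfvarphi_\e)$ with respect to $(\nu_\e\otimes\L^2,\nu\otimes\L^2)$, combines it with the strong convergence \eqref{cvfistrong} in $(m_\e\otimes\L^2,m\otimes\L^2)$, and then sends $\delta\to0$. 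This is the missing idea in your outline.

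\medskip
\textbf{Use of Proposition~\ref{propvarphie}.} That proposition is stated only for the partially mollified field $\bfvarphi=\bfv^\delta$, not for an arbitrary $\bfvarphi\in BD^{\nu,m}_0(\Omega)$; the resulting approximants lie in $H^1(\Omega;\RR^3)$ (not $H^1_0$). The paper first passes to the limit to obtain $a(\bfu,\bfv^\delta)=\int_\Omega\bff\cdot\bfv^\delta$, and only afterwards invokes Proposition~\ref{propmol} (specifically \eqref{vdeltaBDnum}) to send $\delta\to0$ and reach $a(\bfu,\bfv)=\int_\Omega\bff\cdot\bfv$. Your write-up should reflect this two-step structure; the mollification is what makes the explicit construction \eqref{defvarphiek} possible, since it requires pointwise-in-$x'$ regularity of $\bfsigma^\nu(\bfvarphi)$ and of $\partial_{x_\alpha}\bfvarphi$.
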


 \begin{remark}\label{remen} The problem \eqref{Peff} is equivalent to 
 
\begin{equation}
\label{Peffmin}
\begin{aligned}
& \min_{\bfu\in BD^{\nu,m}_0(\O)} F(\bfu)-\int_\O \bff\cdot\bfu dx
\end{aligned}
\end{equation}

where the functional $F$ is defined on $BD^{\nu,m}_0(\O)$ by 

\begin{equation}
\label{defF}
\begin{aligned}
& F(\bfu):= \int_\O f\lp \tfrac{\bfE\bfu}{\nu\otimes\L^2}\rp d\nu\otimes\L^2 + \int_\O  g(\bfe_{x'}(\bfu^\star) ) d m\otimes\L^2,
\end{aligned}
\end{equation}

in terms of $f,g$ given by 

\begin{equation}
\label{deffg}
\begin{aligned}
& f(\bfXi):=
 \tfrac{1}{2} \bfa^\bot\bfXi:\bfXi, \qquad  g(\bfGamma) ):= \tfrac{1}{2} \bfa^\Vert\bfGamma:\bfGamma.
\end{aligned}
\end{equation}

\end{remark}

\begin{remark}\label{remdec} The symmetric distributional derivative $\bfE\bfvarphi$ of any $\bfvarphi\in BD(\O)$ can be decomposed into an absolutely continuous part $\bfE^a\bfvarphi$ with respect to $\L^3$, 
a jump part $\bfE^j\bfvarphi$ and a Cantor part $\bfE^c\bfvarphi$. The Cantor part $\bfE^c\bfvarphi$ vanishes on any Borel set which is $\sigma$-finite with respect to $\H^2$. The density     $\bfe(\bfvarphi)$ of $\bfE\bfvarphi$  with respect to $\L^3$  is      the approximate symmetric differential of $\bfvarphi$ (see   \cite[Theorem 4.3]{AmCoDa} for more details). 
When  $\bfE\bfvarphi\ll\L^3$,   $\bfe(\bfvarphi)$ is  the weak symmetric gradient of $\bfvarphi$.
 The jump part takes the form $\bfE^j\bfvarphi=\bfE\bfvarphi_{\lfloor  J_\bfvarphi}$, where the ``jump set"  $ J_\bfvarphi$ 
 is   a  countably  $\H^2$-rectifiable subset of $\O$ (i.e.  there exists countably many Lipschitz functions $f_i:\RR^2\to \O$ such that
 $\H^2\lp J_\bfvarphi\setminus \bigcup_{i=0}^{+\infty} f_i(\RR^2)\rp=0$, see \cite[Definition 2.57]{AmFuPa}).
For any countably $\H^2$-rectifiable Borel set $M\subset\O$, the following holds (see  \cite[Chapter II]{Te}, \cite[p.209 (3.2)]{AmCoDa}) 

 \begin{equation}\label{EphilfloorM}
 \bfE\bfvarphi\lfloor_{M} = (\bfvarphi^+_{M}-\bfvarphi^-_{M})\odot\bfn_{M}\H^2_{\lfloor M}, 
      \end{equation}
      
 where $\bfn_{M}(x)$ is a unit normal to $M$ at $x$ and 
$\bfvarphi^\pm_{M}$ is deduced from \eqref{defhalfprecise} by substituting  
 $B_r^\pm(x,\bfn_{M}):=\{y\in B_r(x),  \pm(\bfy-\bfx)\cdot  \bfn_{M}(x) >0\}$  for $B^\pm_r(x)$.

\quad Due to their  absolutely continuity with respect to $\nu\otimes\L^2$, the    symmetric distributional gradient of  the elements of $BD(\O)$  admit  a specific decomposition. 
  The measure  $\nu$ (resp. $m$) can be  split into an absolutely continuous part  $\nu^a$ (resp. $m^a$)  with respect to the Lebesgue measure,
  a   singular part  without atoms  or Cantor part    $\nu^c$ (resp. $m^c$),  and a purely atomic part $\nu^{at}$: 

 \begin{equation}
\begin{aligned}
& \nu\hskip-0,1cm= \hskip-0,1cm \nu^a \hskip-0,1cm +\hskip-0,05cm \nu^c\hskip-0,1cm + \nu^{at},\  \nu^{at}\hskip-0,1cm= \hskip-0,2cm \sum_{t\in \A_\nu}\hskip-0,1cm \nu(\!\{t\}\!)
\delta_{t},  \   \nu^a\hskip-0,1cm=  \hskip-0,1cm\tfrac{\nu}{\L^1} \L^1, \   \A_\nu\hskip-0,1cm:=\hskip-0,1cm\{ t\in\hskip-0,1cm [0,L]; \nu(\{t\})>0\},
\\&  m\hskip-0,1cm =\hskip-0,1cm m^a\hskip-0,1cm +\hskip-0,05cm m^c  \hskip-0,1cm +\hskip-0,2cm \sum_{t\in\A_m}m(\{t\}) \delta_{t},
\  m^a =  \tfrac{m}{\L^1} \L^1,
 \   \A_m\hskip-0,1cm :=\{ t\in [0,L]; m(\{t\})\hskip-0,1cm >0\}.
\end{aligned}
\label{defAnu}
\end{equation} 

We have   $ \nu^a\otimes\L^2\ll \L^3$, it can be shown  that  $\nu^c\otimes\L^2$ vanishes on countably $\H^2$-rectifiable   Borel sets,
the measures $\nu^c\otimes\L^2$ and $\L^3$ are mutually singular and $\nu^{at}\otimes\L^2\ll\H^2_{\lfloor\Sigma_\nu}$, where 

\begin{equation}  \label{defSigma}
\begin{aligned}
\Sigma_\nu:= \bigcup_{t\in\A_\nu}\Sigma_t, \quad \Sigma_m:= \bigcup_{t\in\A_m}\Sigma_t, \quad \Sigma :=\Sigma_\nu\cup \Sigma_m.
\end{aligned}
\end{equation}

Accordingly, the condition $\bfE(\bfvarphi)\ll\nu\otimes \L^2$ satisfied by 
any  element $\bfvarphi$ of $BD^{\nu,m}_0(\O)$ implies 
$\bfE^a\bfvarphi\ll  \nu^a\otimes\L^2$,  $\bfE^c\bfvarphi\ll  \nu^c\otimes\L^2$, $\bfE^j\bfvarphi\ll \H^2_{\lfloor \Sigma_\nu}$. Taking   \eqref{EphilfloorM} into account, we deduce

 \begin{equation}
\begin{aligned}
 \bfE\bfvarphi&
 = \bfe(\bfvarphi) \L^3+ \tfrac{\bfE\bfvarphi}{\nu^c\otimes\L^2} \nu^c\otimes\L^2+ \sum_{t\in \A_\nu} (\bfvarphi^+-\bfvarphi^-)\odot \bfe_1 \H^2_{\lfloor \Sigma_t}.
\end{aligned}
\label{decEfi}\end{equation} 

 The substitution of  \eqref{decEfi} into \eqref{defa}, \eqref{defabotaVert} leads to

 \begin{equation}
\label{defa2}
\begin{aligned}
 a(\bfu,\bfvarphi)\hskip-0,1cm  =\hskip-0,1cm  &  \int_\O\hskip-0,1cm \bfa\bfe(\bfu)\hskip-0,1cm: \hskip-0,1cm\bfe(\bfvarphi )dx+\hskip-0,1cm\hskip-0,1cm
\sum_{t\in \A_\nu}\hskip-0,1cm \nu(\{t\})^{-1}  \hskip-0,1cm\int_{\Sigma_{t}} (\bfu^+ \hskip-0,1cm-\bfu^-)\cdot  
 \bfA  (\bfvarphi^+ \hskip-0,1cm-\bfvarphi^-) d\H^2
 \\&  + \sum_{t\in\A_m}m(\{t\}) \int_{\Sigma_t} \bfa^\Vert\bfe_{x'}(\bfu^\star):\bfe_{x'}(\bfvarphi^\star) d\H^2
 \\& + \int_\O \bfa^\bot \tfrac{\bfE\bfu}{\nu^c\otimes\L^2} : \tfrac{\bfE\bfvarphi}{\nu^c\otimes\L^2}   d\nu^c\otimes\L^2 
+\int_\O  \bfa^\Vert\bfe_{x'}(\bfu^\star):\bfe_{x'}(\bfvarphi^\star)  dm^c \otimes\L^2
 , 
 \end{aligned}
\end{equation}

where the fourth  order tensor  $\bfa$ and the matrix $\bfA$ are  given by

\begin{equation}
\label{deftensoraA}
\begin{aligned}
  \bfa   = \lp\tfrac{\nu}{\L^1}\rp^{-1} \bfa^\bot+  \tfrac{m}{\L^1}\bfa^\Vert,
\qquad \bfA := \begin{pmatrix} l+2&0&0\\0&1&0\\0&0&1\end{pmatrix}.
\end{aligned}
\end{equation}

%

Similarly,   substituting \eqref{decEfi} into \eqref{defF} yields

\begin{equation}
\nonumber
\begin{aligned}
  &F(\bfu):=  \int_\O f\lp\bfe(\bfu)\rp \lp\tfrac{\nu}{\L^1}\rp^{-1} + g(\bfe_{x'}(\bfu ) ) \tfrac{m}{\L^1} dx 
  \\&+\hskip-0,1cm\sum_{t\in \A_\nu}  \hskip-0,1cm\tfrac{\nu(\{t\})^{-1}}{2}\hskip-0,1cm \int_{\Sigma_t}\hskip-0,1cm (\bfu^+-\bfu^-)\hskip-0,1cm\cdot\hskip-0,1cm \bfA(\bfu^+-\bfu^-) d\H^2
 \hskip-0,1cm +\hskip-0,15cm\sum_{t\in \A_m} \hskip-0,1cm \tfrac{m(\{t\})}{2} \hskip-0,1cm\int_{\Sigma_t} \hskip-0,1cmg(\bfe_{x'}(\bfu^\star ) \!)  d\H^2
  \\&+\int_\O f\lp \tfrac{\bfE^c\bfu}{\nu\otimes\L^2}\rp d\nu^c\otimes\L^2 + \int_\O  g(\bfe_{x'}(\bfu^\star) ) d m^c\otimes\L^2.
\end{aligned}
\end{equation}

We   can  write the PDE system associated with \eqref{Peff}, \eqref{defa2} provided  the Cantor parts  $\nu^c$ and $m^c$  vanish and the sets ot atoms $\A_\nu$ and $\A_m$ are  finite:

\begin{corollary} 
\label{cor:strongform}
If $\nu^c=m^c=0$ and    $\A_\nu$, $\A_m$ are  finite, 
the problem \eqref{Peff} is equivalent to 
 
  \begin{equation}
  \la \begin{aligned}
  &- \bfdiv \bfa \bfe (\bfu) = \bff \  \hbox{ in }   \   \Omega\setminus\Sigma,  
  \quad \bfu\in BD^{\nu,m}_0(\O),
         \\[2pt]&    \nu(\{t\})^{ \hskip-0,05cm-1} \hskip-0,1cm\bfA (\bfu^+\hskip-0,1cm-\bfu^-)\hskip-0,05cm=\hskip-0,05cm
    \bfa \bfe (\bfu^-)\bfe_1\hskip-0,05cm=\hskip-0,05cm  \bfa \bfe (\bfu^+)\bfe_1
  \  & &\hbox{ on } \Sigma_{t}, \  \forall t\in \A_\nu,
    \\[2pt]& \bfa \bfe (\bfu^-)\bfe_1\hskip-0,05cm-\hskip-0,05cm  \bfa \bfe (\bfu^+)\bfe_1-m(\{t\})\bfdiv_{x'} \bfa^\Vert \bfe_{x'}(\bfu^\star) =0 & & \hbox{ on   }  \Sigma_t,\   \forall t\in \A_m,
   \end{aligned}\rpt
\label{PeffnoCantor}
\end{equation}
\end{corollary}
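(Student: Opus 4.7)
The plan is to probe the variational identity \eqref{Peff} --- which, under $\nu^c=m^c=0$ with finite $\A_\nu$, $\A_m$, reduces to only bulk, jump and tangential contributions as in \eqref{defa2} --- against three complementary families of test fields supported near different parts of $\Sigma$. Since $\A_\nu\cap\A_m=\emptyset$ by \eqref{nocommonatom}, each atom plane can be analysed independently.

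\textit{Bulk equation and $\A_\nu$-conditions.} First, I test against $\bfvarphi\in C_c^\infty(\O\setminus\Sigma;\RR^3)$: the two finite sums in \eqref{defa2} vanish (since $\bfvarphi^\pm=0$ on every $\Sigma_t$), and slab-wise Green's formula over $(t_k,t_{k+1})\times\O'$ yields the bulk equation $-\bfdiv(\bfa\bfe(\bfu))=\bff$ in $\O\setminus\Sigma$. Next, for each $t\in\A_\nu$ I use test fields smooth on either side of $\Sigma_t$, localised in a neighborhood of $\Sigma_t$ disjoint from the other atoms, with prescribed jump $\bfpsi:=\bfvarphi^+-\bfvarphi^-$. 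Integration by parts in the two adjacent slabs, combined with the bulk equation just obtained, reduces \eqref{Peff} to
\begin{equation*}
\int_{\Sigma_t}\big[\bfa\bfe(\bfu^-)\bfe_1\cdot\bfvarphi^- - \bfa\bfe(\bfu^+)\bfe_1\cdot\bfvarphi^+\big]\,d\H^2 + \nu(\{t\})^{-1}\int_{\Sigma_t}(\bfu^+-\bfu^-)\cdot\bfA\bfpsi\,d\H^2 = 0.
\end{equation*}
Setting first $\bfpsi=0$ (so $\bfvarphi^+=\bfvarphi^-$ is free) gives the transmission $\bfa\bfe(\bfu^-)\bfe_1=\bfa\bfe(\bfu^+)\bfe_1$ on $\Sigma_t$; inserting this back and then varying $\bfpsi$ gives the jump law $\nu(\{t\})^{-1}\bfA(\bfu^+-\bfu^-)=\bfa\bfe(\bfu^\pm)\bfe_1$.

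\textit{$\A_m$-condition and converse.} The hypothesis \eqref{nocommonatom} forces every admissible $\bfvarphi$ to satisfy $\bfvarphi^+=\bfvarphi^-=\bfvarphi^\star$ on each $\Sigma_t$ with $t\in\A_m$ (since $\bfE\bfvarphi\ll\nu\otimes\L^2$ carries no $\H^2$ mass there). Taking $\bfvarphi$ smooth across $\Sigma_t$, supported near $\Sigma_t$ only, with trace $\bfpsi\in C_c^\infty(\O';\RR^3)$, the $\A_\nu$ sum vanishes; slab-wise integration by parts of the bulk term together with the bulk equation leaves the boundary defect $\int_{\Sigma_t}(\bfa\bfe(\bfu^-)-\bfa\bfe(\bfu^+))\bfe_1\cdot\bfpsi\,d\H^2$, and a tangential integration by parts on $\Sigma_t$ rewrites the $\bfa^\Vert$ sum as $-m(\{t\})\int_{\Sigma_t}\bfdiv_{x'}(\bfa^\Vert\bfe_{x'}(\bfu^\star))\cdot\bfpsi\,d\H^2$. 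Arbitrariness of $\bfpsi$ delivers the third equation of \eqref{PeffnoCantor}. The converse implication --- that \eqref{PeffnoCantor} implies \eqref{Peff} --- follows by reading all of this backwards: for any $\bfvarphi\in BD^{\nu,m}_0(\O)$, I split $\O$ into the finitely many slabs separated by $\Sigma$, apply Green's formula on each, and recognise the resulting interface integrals as the jump and tangential terms of \eqref{defa2} via the three equations of \eqref{PeffnoCantor}.

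The main technical obstacle will be to guarantee that the concrete test fields used above actually lie in $BD^{\nu,m}_0(\O)$, in particular that their a.c.\ strain is compatible with the constraint $\bfE\bfvarphi\ll\nu\otimes\L^2$ (which may force $\bfe(\bfvarphi)$ to vanish where $\tfrac{\nu}{\L^1}=0$), and to justify the slab-wise integrations by parts --- which requires enough interior regularity of $\bfu$ in each slab $(t_k,t_{k+1})\times\O'$ for the one-sided traces $\bfa\bfe(\bfu^\pm)\bfe_1$ on each $\Sigma_t$ to be well-defined. This last point is handled via classical interior elliptic regularity applied to the bulk system $-\bfdiv(\bfa\bfe(\bfu))=\bff$ on each slab, with the Dirichlet data inherited from the adjacent interfaces.
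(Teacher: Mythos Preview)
Your approach matches the paper's: test with $\bfvarphi\in\D(\O\setminus\Sigma)$ for the bulk equation, then with piecewise-smooth $\bfvarphi\in BD^{\nu,m}_0(\O)$ and integrate by parts slab-wise to extract the transmission conditions, the converse being asserted without detail. Your first technical concern dissolves via Lemma~\ref{lemL1nu} (since $\L^1_{\lfloor[0,L]}\ll\nu$, under $\nu^c=0$ and $\A_\nu$ finite one has $\tfrac{\nu}{\L^1}>0$ $\L^1$-a.e., so smooth $\bfvarphi$ with jumps only on $\Sigma_\nu$ automatically satisfy $\bfE\bfvarphi\ll\nu\otimes\L^2$), while your second concern---interior regularity needed to make sense of the one-sided traces $\bfa\bfe(\bfu^\pm)\bfe_1$---is legitimate and is tacitly assumed rather than proved in the paper as well.
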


where  
$\Sigma$, $\bfa$, $\bfA$ are  given by \eqref{defSigma}, \eqref{deftensoraA}.

\begin{proof} 
Choosing $\bfvarphi\in \D(\O\setminus\Sigma)$ in \eqref{Peff}, taking \eqref{defa2} into account, we get $ \int_{\O\setminus\Sigma}  \bfsigma(\bfu):\bfe (\bfvarphi) dx =\int_\O \bff\cdot\bfvarphi$ and infer, 
by the arbitrary choice of $\bfvarphi$,  that  $- \bfdiv  \bfa \bfe (\bfu) = \bff $ in $ \Omega\setminus\Sigma$. 
Choosing $\bfvarphi\in BD^{\nu,m}_0(\O)$ such that   $\bfvarphi\in C^\infty( U)$  for every connected component $U$ of $\O\setminus \Sigma_\nu$, and integrating 
$ \bfa \bfe (\bfu):\bfe (\bfvarphi)$ by parts  over  each connected component of $\O\setminus\Sigma$, taking the first line of \eqref{PeffnoCantor} into account,   we deduce 

 \begin{equation}
\nonumber
\begin{aligned}
&\sum_{t\in\A_m} \int_{\Sigma_t} \lp \bfa \bfe (\bfu^-)\bfe_1\hskip-0,05cm-\hskip-0,05cm  \bfa \bfe (\bfu^+)\bfe_1\rp\cdot \bfvarphi+
 m(\{t\})\bfa^\Vert\bfe_{x'}(\bfu^\star):\bfe_{x'}(\bfvarphi^\star) d\H^2
\\& +
 \hskip-0,1cm\sum_{t\in \A_\nu}  \hskip-0,1cm   \int_{\Sigma_{t}}   \hskip-0,1cm
  \bfa \bfe (\bfu^-\!)\bfe_1 \hskip-0,1cm\cdot \!\bfvarphi^- \hskip-0,1cm -\!   \bfa \bfe (\bfu^+\!)\bfe_1 \hskip-0,1cm \cdot\! \bfvarphi^+  \hskip-0,1cm
+  \!(\bfu^+ \hskip-0,1cm -\!\bfu^-\!) \hskip-0,1cm\cdot  \hskip-0,1cm 
\nu(\{t\})^{ \hskip-0,07cm-1}  \hskip-0,1cm\bfA  (\bfvarphi^+ \hskip-0,1cm -\!\bfvarphi^-\!) d\H^2  \hskip-0,1cm=\!0,
\end{aligned}
\end{equation}

and obtain the transmission conditions stated in  the second and third lines of \eqref{PeffnoCantor}.
 Conversely, any solution to 
      \eqref{PeffnoCantor} satisfies \eqref{Peff}.
\end{proof} 

\end{remark}

\begin{remark}\label{remcommon}
When  $\nu$ and $m$ do  not satisfy  \eqref{nocommonatom}, 
the effective problem not only  depends on the couple $(\nu,m)$, but also on the choice of the sequence $(\mu_\e)$ satisfying \eqref{mueto}.
By way of illustration, let us  fix  two sequences of positive reals $(r_\e^{(1)})$, $(r_\e^{(2)})$  converging to $0$,  set  $r_\e:= \max\la r_\e^{(1)}, r_\e^{(2)}\ra$, and consider the sequence $(\mu_\e)$ defined by 
$$
\mu_\e :=  \mathds{1}_{
(0,L)\setminus \lp\tfrac{L}{2}-\tfrac{r_\e}{2}, \tfrac{L}{2}+\tfrac{r_\e}{2} \rp}
 +  r_\e^{(1)}  \mathds{1}_{\big(\tfrac{L}{2}-\tfrac{r_\e^{(1)}}{2}, \tfrac{L}{2}+\tfrac{r_\e^{(1)}}{2}\big)}
+  \tfrac{1}{r_\e^{(2)}} \mathds{1}_{\big(\tfrac{L}{2}-\tfrac{r_\e^{(2)} }{2}, \tfrac{L}{2}+\tfrac{r_\e^{(2)}}{2} \big)}.
$$

The convergences   \eqref{mueto}   are satisfied  with 
$ \nu =m= \delta_{\tfrac{L}{2}}+ \L^1$.  By adapting to  the framework  of elasticity  the argument developed in \cite[Chapter 4]{BeThesis} in the context of  the heat equation,
one can prove the following results:

\begin{itemize}

\item If  $r_\e^{(2)}\ll r_\e^{(1)}$, the effective problem takes the form 

$$
\inf_{} \la 
F(\bfvarphi)
 -\int_\O \bff\cdot\bfvarphi dx, \quad \bfvarphi \in H^1(\O\setminus\Sigma_{L/2}), \ \bfvarphi=0 \hbox{ on } \ \partial\O\ra,
 $$
 
\ni  where, setting $\bfsigma(\bfvarphi):= l\tr(\bfe(\bfvarphi))\bfI+2\bfe(\bfvarphi)$, $\bfsigma_{x'}(\bfvarphi'):= l\tr(\bfe_{x'}(\bfvarphi'))\bfI+2\bfe_{x'}(\bfvarphi')$,
 $F$ is the non-local functional defined by (see \eqref{deftensoraA})

 \begin{equation}
\nonumber
\begin{aligned}
& F(\bfvarphi)=    \inf_{\bfv\in H^1_0(\Sigma_{L/2};\RR^3)} \Phi(\bfvarphi,\bfv),
\\& \Phi(\bfvarphi,\bfv),:=\tfrac{1}{2} \int_{\O\setminus\Sigma_{L/2}} \bfsigma(\bfvarphi):\bfe (\bfvarphi) dx  +\tfrac{1}{2}  \int_{\Sigma_{L/2}} \bfsigma_{x'}(\bfv'):\bfe_{x'}(\bfv') d\H^2
\\&\hskip 3cm +
\tfrac{1}{4}  \int_{\Sigma_{L/2}} (\bfv -\bfvarphi^-)\cdot  
\bfA  (\bfv -\bfvarphi^-) +  (\bfv -\bfvarphi^+)\cdot  
\bfA  (\bfv -\bfvarphi^+) 
 d\H^2.
\end{aligned}
\end{equation}

\item If   $r_\e^{(1)}\ll r_\e^{(2)}$, the effective problem is given by

$$
\inf_{} \la 
F(\bfvarphi)
 -\int_\O \bff\cdot\bfvarphi dx, \quad \bfvarphi \in H^1(\O\setminus\Sigma_{L/2}), \ \bfvarphi=0 \hbox{ on } \ \partial\O\ra,
 $$
 
 where

 \begin{equation}
\nonumber
\begin{aligned}
 F(\bfvarphi)=&\tfrac{1}{2} \int_{\O\setminus\Sigma_{L/2}} \bfsigma(\bfvarphi):\bfe (\bfvarphi) dx  + \tfrac{1}{4} \int_{\Sigma_{L/2}} \bfsigma_{x'}(\bfvarphi^-):\bfe_{x'}(\bfvarphi^-) d\H^2
\\&+ \tfrac{1}{4} \int_{\Sigma_{L/2}} \bfsigma_{x'}(\bfvarphi^+):\bfe_{x'}(\bfvarphi^+) d\H^2
  + \tfrac{1}{2}\int_{\Sigma_{L/2}} (\bfvarphi^+ -\bfvarphi^-)\cdot  
\bfA  (\bfvarphi^+ -\bfvarphi^-)   d\H^2.
\end{aligned}
\end{equation}

\end{itemize}

Indeed,  when  \eqref{nocommonatom} is not satisfied, there exists   infinitely many  different limit problems associated to  some sequence $(\mu_\e)$ satisfying \eqref{mueto}.

\end{remark}


 
 
 \ni  
\begin{remark}\label{remsystems}
 Our method  applies to the study 
of   
second-order elliptic systems of partial differential equations 
  of the type 
 \begin{equation}
(\P_\e):  \begin{aligned}
   -  & \bfdiv(\mu_\e\hbox{\rm\bfC\,} \bfnabla \bfu_\e) = \bff  \quad  \hbox{ in }    \Omega,  
        \quad  \bfu_\e \in H^1_0(\Omega ; \RR^n), \   \bff \in  L^\infty(\O;\RR^n)  ,
 \end{aligned}
\label {PeSyst} 
\end{equation}

 where $\Omega:= (0,L)\times \Omega'$ is a cylindrical domain in $\RR^d$  and  $\hbox{\rm\bfC} $
is
 a second order tensor on  $\RR^{n+d}$  satisfying the following  assumptions of symmetry
  and ellipticity:

 \begin{equation}
\label{hypA}
\begin{aligned}
&  \hbox{\rm C}_{ijpq} = \hbox{\rm C}_{pqij} \quad \forall ((i,j), (p,q))\in (\RR^n\times\RR^d)^2,
\\&\hbox{\rm\bfC\,} \bfXi : \bfXi  \ge c \vert \bfXi \vert^2  \quad \forall \bfXi \in \RR^{n\times d} \qquad \hbox{ for some } \ c>0.
 \end{aligned}
\end{equation}
 
We  suppose that

 \begin{equation}
\label{defT}
\begin{aligned}
& \bfT:=  \sum_{i,p=1}^n \hbox{\rm C}_{i1p1}\bfe_i\otimes\bfe_p \quad \hbox{ is invertible}.
 \end{aligned}
\end{equation}

 We denote by $BV(\Omega;\RR^n)$  the space of $\RR^n$-valued functions  on $\Omega$ with bounded variation, that is
  
  \begin{equation} 
\label{defBV}
   \begin{aligned}
  & BV(\Omega;\RR^n):=  \la  \bfvarphi\in L^1(\Omega;\RR^n): \  \bfD\bfvarphi  \in \M(\Omega; \RR^{n+d})\ra.
   \end{aligned}
\end{equation}

   Under these  assumptions,    the solution to \eqref{PeSyst} weakly* converges  in $BV(\Omega;\RR^n)$ 
  to the unique solution to the problem 

 \begin{equation}
 \label{PeffSyst}
 (\P^{eff}): \la
 \begin{aligned}
 &
 a(\bfu,\bfvarphi ) = \int_\Omega \bff\cdot \bfvarphi  , \qquad \forall \ \bfvarphi  \in BV^{\nu,m}_0(\Omega),
 \\& \bfu\in BV^{\nu,m}_0(\Omega),
 \end{aligned} \rpt
 \end{equation}

where $ BV^{\nu,m}_0(\Omega)$ is the Hilbert space defined by 

\begin{equation}
 \label{defBVnum0}
 \hskip-0,2cm  \begin{aligned} 
 &
 BV^{\nu,m}_0(\Omega) := \la  \bfvarphi \in BV(\Omega;\RR^n)\lb
 \begin{aligned}
 & \bfD\bfvarphi \ll \nu\otimes\L^{d-1}, \quad \bfvarphi=0 \ \hbox{ on } \ \partial\Omega
 \\& 
 \tfrac{\bfD\bfvarphi\ \ }{\nu\otimes\L^{d-1}}\in L^2_{\nu\otimes\L^{d-1}}(\Omega;\RR^n)
  \\ & \bfvarphi^\star \in L^2_{m }(0,L; H^1_0(\Omega';\RR^n))
 \end{aligned}\rpt\ra,
 \\& \left|\!\left| \bfvarphi \right|\!\right|_{BV^{\nu,m}_0(\Omega)}\!
 := \!\!\lp  \int_{\Omega} \!\! | \tfrac{\bfD\bfvarphi\ \ }{\nu\otimes\L^{d-1}}  |^2  \!d\nu\!\otimes\!\L^{d-1}\rp^{\!\frac{1}{2}}\!\!\!+ \! \lp\int_\Omega |\bfnabla_{x'}(\bfvarphi ^\star)|^2 \! dm\otimes\L^{d-1}\rp^{\frac{1}{2}}\!\!\!\!\!,
 \end{aligned} 
 \end{equation} 

and, setting
 
 \begin{equation}
 \label{defnablax'Xi}
 \begin{aligned}
 \bfnabla_{x'} \bfvarphi:=  \sum_{i=1}^n\sum_{\a=2}^d \pd{\varphi_i}{x_\a} \bfe_i \otimes \bfe_\a,
 \end{aligned}
 \end{equation} 
 
  $a$ is the continuous   coercive symmetric bilinear form on $BV^{\nu,m}_0(\O)$ given by 
  
   \begin{equation}
\label{defaSyst}
\begin{aligned}
\hskip-0,1cm  a(\bfu,\bfvarphi) \hskip-0,1cm  : = \hskip-0,1cm   &\int_\O\hskip-0,1cm   \bfa^\bot\tfrac{\bfD\bfu}{\nu\otimes\L^2}\hskip-0,1cm   :\hskip-0,1cm   \tfrac{\bfD\bfvarphi}{\nu\otimes\L^2}   d\nu\otimes\L^{d-1}
+\int_\O \bfa^\Vert \bfnabla_{x'}(\bfu^\star)\hskip-0,1cm  :\hskip-0,1cm  \bfnabla_{x'}(\bfvarphi^\star)  dm\otimes\L^{d-1}\hskip-0,1cm  , 
 \end{aligned}
\end{equation}

where 

\begin{equation}
\label{defabotaVertsyst}
\begin{aligned}
& a^\bot_{ijkl}:=  \sum_{p,r=1}^n C_{ijp1} (\bfT^{-1})_{pr}C_{r1kl},
\\& a^\Vert_{ijkl} :=    \sum_{p,r=1}^n ( C_{ijp1}(\bfT^{-1})_{pr} C_{r1kl} + C_{ijkl}) (1-\delta_{j1})(1-\delta_{l1})  .
\end{aligned}
\end{equation}

\begin{proposition}\label{thsystem}
  The normed  space  $BV^{\nu,m}_0(\Omega)$ defined by \eqref{defBVnum0}   is a Hilbert space. Under the assumptions \eqref{mueto}, \eqref{hypA},  \eqref{defT}, the symmetric bilinear form $a(\cdot,\cdot)$   defined  by \eqref{defaSyst} is coercive and continuous on $BV^{\nu,m}_0(\Omega)$, and 
the sequence $(\bfu_\e)$ of the solutions to (\ref{PeSyst}) 
weakly* converges in $BV(\Omega;\RR^n)$ 
to the unique solution $\bfu$  to  (\ref{PeffSyst}).
\end{proposition}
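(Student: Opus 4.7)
The plan is to mirror the strategy developed for the elasticity problem, replacing $BD(\Omega)$ by $BV(\Omega;\RR^n)$ and the symmetric gradient $\bfe(\cdot)$ by the full gradient $\bfnabla(\cdot)$. The hypothesis \eqref{defT} that $\bfT$ is invertible plays exactly the role, in the general system, that the structure of $\bfa^\bot$ played in isotropic elasticity: it allows one to express the first-direction derivatives of $\bfu$ algebraically in terms of the first column of the ``stress'' $\bfsigma := \hbox{\rm\bfC\,}\bfnabla\bfu$ and of the transverse derivatives $\partial_\a\bfu$ ($\a \ne 1$), which is precisely what makes the rearrangement $\hbox{\rm\bfC\,}\bfnabla\bfu : \bfnabla\bfvarphi = \bfa^\bot\bfnabla\bfu : \bfnabla\bfvarphi + \bfa^\Vert\bfnabla\bfu : \bfnabla\bfvarphi$ with the tensors of \eqref{defabotaVertsyst} possible.

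The first step is the a priori analysis. Testing \eqref{PeSyst} against $\bfu_\e$ and using the ellipticity \eqref{hypA} yields $\int_\Omega \mu_\e |\bfnabla\bfu_\e|^2\,dx \le C$. Combined with $\|\mu_\e^{-1}\|_{L^1(0,L)} \le C$ and Cauchy--Schwarz, this gives $\|\bfnabla\bfu_\e\|_{L^1}\le C$, so that, up to a subsequence, $\bfu_\e \weakstarconverge \bfu$ in $BV(\Omega;\RR^n)$. Setting $\bfsigma_\e := \mu_\e\hbox{\rm\bfC\,}\bfnabla\bfu_\e$, the equilibrium $-\bfdiv\bfsigma_\e = \bff$ with $\bff \in L^\infty$ yields control of the first-direction column $\bfsigma_\e\bfe_1$, and by invertibility of $\bfT$ the quantity $\partial_1\bfu_\e$ is itself a linear combination of $\mu_\e^{-1}\bfsigma_\e\bfe_1$ and of the transverse derivatives $\partial_\a\bfu_\e$. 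Passing to weak$^\ast$ limits in $\M(\overline\Omega;\RR^n)$ and $\M(\overline\Omega;\RR^{n\times d})$, and exploiting the continuity in $x_1$ of $\bfsigma_\e\bfe_1$ (which prevents concentration at atoms of $\nu$) and the stratified structure (which prevents concentration of $\partial_\a\bfu_\e$ at atoms of $m$), one identifies $\bfD\bfu \ll \nu\otimes\L^{d-1}$ with density in $L^2_{\nu\otimes\L^{d-1}}$, and $\bfu^\star \in L^2_m(0,L;H^1_0(\Omega';\RR^n))$; that is, $\bfu \in BV^{\nu,m}_0(\Omega)$. The Hilbert character of $BV^{\nu,m}_0(\Omega)$ then follows from the completeness of the two weighted $L^2$ spaces appearing in its norm together with the closedness of the distributional identities defining its elements, while continuity and coercivity of $a(\cdot,\cdot)$ follow from \eqref{defabotaVertsyst} by a Schur-complement computation based on \eqref{hypA} and the invertibility of $\bfT$.

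Finally, to pass to the limit in $\int_\Omega \mu_\e\hbox{\rm\bfC\,}\bfnabla\bfu_\e : \bfnabla\bfvarphi_\e\,dx = \int_\Omega \bff\cdot\bfvarphi_\e\,dx$, I would employ the rearrangement above so that each product couples two sequences converging in compatible measure topologies: $\mu_\e^{-1}\bfsigma_\e\bfe_1$ against the first-direction derivatives of the test field, and $\mu_\e\bfnabla_{x'}\bfu_\e$ against its transverse derivatives. This requires a \emph{strong approximability} statement: every $\bfvarphi \in BV^{\nu,m}_0(\Omega)$ is the limit of a sequence $\bfvarphi_\e \in H^1_0(\Omega;\RR^n)$ for which $\mu_\e^{-1}(\hbox{\rm\bfC\,}\bfnabla\bfvarphi_\e)\bfe_1$ and $\bfnabla_{x'}\bfvarphi_\e$ converge in appropriate strong senses to their natural limits. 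This is the direct analogue of Proposition \ref{propvarphie} and is where I expect the real technical effort to lie: one must mollify $\bfvarphi$ in $x'$, distribute the jumps of $\bfvarphi^\star$ across the ``soft'' layers concentrating at the atoms of $\nu$ via an interpolation affine in $x_1$, and adjust the construction near atoms of $m$ so that the transverse gradient converges in the correct weighted sense. Once this recovery is in place, passage to the limit yields $a(\bfu,\bfvarphi) = \int_\Omega \bff\cdot\bfvarphi$ for every $\bfvarphi \in BV^{\nu,m}_0(\Omega)$, and the coercivity of $a(\cdot,\cdot)$ ensures uniqueness, which upgrades the extraction to convergence of the full sequence.
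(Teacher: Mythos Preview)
Your plan is correct and matches the paper's own sketch: a priori bounds from the energy, compactness in $BV$, the rearrangement of $\hbox{\rm\bfC\,}\bfnabla\bfu:\bfnabla\bfvarphi$ via $\bfT^{-1}$ into products that pair a $(\nu_\e,\nu)$-weak object with a $(\nu_\e,\nu)$-strong one (and likewise for $(m_\e,m)$), and a recovery sequence built by partial mollification in $x'$ followed by a one-dimensional interpolation across the layers. One caution on that last step: the interpolation cannot be literally ``affine in $x_1$''---the paper uses $\phi_\e^k(x_1)=\nu_\e((t_{j-1}^k,x_1))/\nu_\e(I_j^k)$, whose derivative is $\mu_\e^{-1}/\nu_\e(I_j^k)$, so that $\mu_\e\,\partial_1\bfvarphi_\e^k$ remains bounded; a genuinely affine profile would give $\mu_\e/|I_j^k|$, which is not controlled where $\mu_\e$ is large, and the strong convergence of $(\hbox{\rm\bfC\,}\bfnabla\bfvarphi_\e)\bfe_1$ with respect to $(\nu_\e\otimes\L^{d-1},\nu\otimes\L^{d-1})$ would fail.
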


 The proof of Proposition \ref{thsystem} is sketched in Section \ref{secsketchproofthsystem}.
 
\end{remark}

  \begin{remark}\label{remheat}  
  The particular case of the heat equation in a three-dimensional domain corresponds to the choice 
  $(n,d)=(1,3)$ in  \eqref{PeSyst}. Setting  $A_{jq}:=\hbox{\rm C}_{1j1q}$,   we deduce from Proposition \ref{thsystem} that  under  the assumption \eqref{mueto}, 
if  $\bfA$ is definite positive and $A_{11}\not=0$ (see   \eqref{defT}),  the solution $u_\e$ to 


\begin{equation}
(\P_\e):  \begin{aligned}
   -  & \bfdiv(\mu_\e \bfA  \bfnabla u_\e) = f  \   \hbox{ in  }    \Omega,  
        \quad  u_\e \in H^1_0(\Omega ), \quad   f \in  L^\infty(\O),
 \end{aligned}
\label{Peheat}
\end{equation}

weakly converges in $BV(\O;\RR)$  to the unique solution to  

\begin{equation}
\nonumber
\begin{aligned}
& \min_{ u\in BD^{\nu,m}_0(\O)} F( u)-\int_\O  f u dx,
\end{aligned}
\end{equation}
 
 where the functional $F$ is defined on $BV^{\nu,m}_0(\O)$ by 

\begin{equation}
\nonumber
\begin{aligned}
& F(u):= \hskip-0,1cm  \tfrac{1}{2} \int_\O \hskip-0,1cm  \bfA^\bot  \tfrac{\bfD u}{\nu\otimes\L^2} \hskip-0,1cm  \cdot  \hskip-0,1cm   \tfrac{\bfD u}{\nu\otimes\L^2}   d\nu\otimes\L^2 +\tfrac{1}{2} \int_\O \bfA^\Vert \bfnabla_{x'}( u^\star) \cdot  \bfnabla_{x'}( u^\star)  d m\otimes\L^2,
\end{aligned}
\end{equation}

in terms of $\bfA^\bot, \bfA^\Vert$ given by 

\begin{equation}
\nonumber
\begin{aligned}
& A^\bot_{ij}:= \tfrac{A_{i1}A_{1j}}{A_{11}},
\qquad  A^\Vert_{ij} := (\tfrac{A_{i1}A_{1j}}{A_{11}}+ A_{ij}) (1-\delta_{i1})(1-\delta_{j1})  .
\end{aligned}
\end{equation}

We we recover some of the results obtained by G. Bouchitt\'e and C. Picard in \cite{BoPi}. However,  the method employed in \cite{BoPi} only applies, in the linear case, to a diagonal conductivity matrices.

%
%
%
%
%
%
%
%
%
%
%
%
%
%
%
%

\end{remark}


%
 
 \section{Technical preliminaries and a priori estimates.}\label{secapriori}

 This section is dedicated, essentially,  to the  analysis of the leading-order asymptotic behaviour of the  solution $(\bfu_\ep)$ to \eqref{Pe} and its stress $\bfsigma(\bfu_\ep)$ in the limit $\ep \rightarrow 0$.  The following notion  of convergence will take a crucial part in this study.
\begin{definition}\label{defcvtet}
Let $\theta_\e, \theta$  be   positive
Radon measures on a compact  set $K\subset\RR^N$  and    let $f_\e, f$  be    
Borel  functions on $K$. We  say that  
  $(f_\e)$ weakly converges to $f$ with respect to the pair $(\theta_\e, \theta)$ if 

\begin{equation}
\label{fetetoft}
\begin{aligned}
&
\sup_\e \int_K |f_\e|^2 d\theta_\e<\infty, \quad f\in L^2_\theta(K)
\\&  \theta_\e \buildrel\star \over \rightharpoonup  \theta \ \hbox{ and } f_\e\theta_\e \buildrel\star \over \rightharpoonup f\theta \quad \hbox{ weakly* in } \ \M(K),
\\&  ( \hbox{notation:} \ f_\e \buildrel {\theta_\e, \theta}\over \rightharpoonup f).
\end{aligned}
\end{equation}

We  say that  
  $(f_\e)$ strongly  converges to $f$ with respect to the pair $(\theta_\e, \theta)$ if 

\begin{equation}
\label{fetetoftstrong}
\begin{aligned}
&
f_\e \buildrel {\theta_\e, \theta}\over  {\relbar\!\!\relbar\!\!\rightharpoonup} f \  \hbox{ and } \  
\limsup_{\e\to0} \int_K |f_\e|^2 d\theta_\e\le\int_K |f|^2 d\theta  \quad ( \hbox{notation:} \ f_\e \buildrel {\theta_\e, \theta}\over  {\relbar\!\!\relbar\!\!\rightarrow}  f).
\end{aligned}
\end{equation}

\end{definition}
 
  
 We now present the main statement of the section. For notational simplicity, the measures  $(\nu_\e\otimes \L^2)_{\lfloor \ov\Omega}$ and $(m_\e\otimes \L^2)_{\lfloor \ov\Omega}$ are denoted   by 
 $\nu_\e\otimes \L^2$ and $m_\e\otimes \L^2$.
 For  simplicity, the extension  by $0$ to $\ov\O$ of the measure $\bfE\bfu$ is still denoted by  $\bfE\bfu$.
 

\begin{proposition}\label{propaprioriu}
	
	\ni   Let   $(\bfu_\e)$  be the sequence of   solutions to \eqref{Pe}.  Then $\bfu_\e$ is bounded in $BD(\Omega)$ and
	
	\begin{equation} 
	\label{supFeuefini}
	\begin{aligned}
	&\sup_{\e>0}  \int_\Omega |\bfu_\e'|^2 dm_\e\otimes \L^2 + \int_\Omega |\bfu_\e| dx+  \int_\Omega \mu_\ep \left\vert \bfe(\bfu_\e) \right\vert^2 dx
	<\infty.
	\end{aligned}
	\end{equation}
	
	Up to   a subsequence, the following hold:

	\begin{equation} 
	\label{cvu}
	\begin{aligned}
	&\bfu_\e \ \buildrel\star\over \rightharpoonup  \bfu \quad 
	\hbox{weakly*  in }  BD(\Omega),\quad \bfE \bfu_\e \ \buildrel\star\over \rightharpoonup  \bfE\bfu \quad 
	\hbox{weakly*  in }  \M(\ov\Omega;\SS^3),
	\\&\mu_\e  \bfe(\bfu_\e) \buildrel{\nu_\e\otimes\L^2, \nu \otimes\L^2}\over \rrrrightharpoonup 
\tfrac{\bfE\bfu}{\nu\otimes\L^2},
	\qquad \bfsigma_\e(\bfu_\e) 
\buildrel {\nu_\e\otimes\L^2, \nu\otimes\L^2}\over \rrrrightharpoonup  
\bfsigma^\nu(\bfu),
\\&   \bfe_{x'}(\bfu_\e') \buildrel{m_\e\otimes\L^2, m \otimes\L^2}\over \rrrrightharpoonup  \bfe_{x'}((\bfu^\star)'),
\quad 
 	\bfu\in BD^{\nu,m}_0(\Omega).
	\end{aligned}
	\end{equation}

\end{proposition}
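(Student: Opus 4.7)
The plan is to first derive the a priori estimate \eqref{supFeuefini} via an energy identity, extract a subsequence converging weakly* in $BD$, and then identify the various measure-theoretic limits. The principal difficulty will be the identification of the limit of the tangential strains $\bfe_{x'}(\bfu'_\e)$ at the atoms of $m$.

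\textbf{A priori estimates and $BD$-compactness.} I would test $(\P_\e)$ with $\bfu_\e$ itself to get $\int_\O \bfsigma_\e(\bfu_\e):\bfe(\bfu_\e)dx = \int_\O \bff\cdot \bfu_\e dx$. Since $\lambda_\e = l\mu_\e$ with $l\ge 0$, the left-hand side dominates $2\int_\O \mu_\e|\bfe(\bfu_\e)|^2 dx$. Cauchy-Schwarz in $x_1$ gives $\|\bfe(\bfu_\e)\|_{L^1(\O)}\le C\|\mu_\e^{-1}\|_{L^1(0,L)}^{1/2}(\int_\O\mu_\e|\bfe(\bfu_\e)|^2dx)^{1/2}$, and combined with the Korn-Poincar\'e inequality $\|\bfu_\e\|_{L^1}\le C\|\bfe(\bfu_\e)\|_{L^1}$ for $H^1_0$ functions and $\bff\in L^\infty$, this bounds $\int\mu_\e|\bfe(\bfu_\e)|^2 dx$ uniformly. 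The slice bound in \eqref{supFeuefini} comes from the $2$D Korn-Poincar\'e inequality applied to $\bfu_\e(x_1,\cdot)\in H^1_0(\O';\RR^3)$, namely $\|\bfu'_\e(x_1,\cdot)\|_{L^2(\O')}\le C\|\bfe_{x'}(\bfu'_\e(x_1,\cdot))\|_{L^2(\O')}$, multiplied by $\mu_\e(x_1)$ and integrated over $(0,L)$. Standard $BD$-compactness then furnishes, along a subsequence, $\bfu_\e\weakstarconverge\bfu$ in $BD(\O)$ and $\bfE\bfu_\e\weakstarconverge \bfE\bfu$ in $\M(\ov\O;\SS^3)$ (after zero extension).

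\textbf{Identification of $\tfrac{\bfE\bfu}{\nu\otimes\L^2}$ and of $\bfsigma^\nu(\bfu)$.} I would observe that $(\mu_\e\bfe(\bfu_\e))(\nu_\e\otimes\L^2) = \bfe(\bfu_\e)\L^3 = \bfE\bfu_\e$, so the candidate limit measure is $\bfE\bfu$. For $\bfphi\in C_c(\ov\O;\SS^3)$, Cauchy-Schwarz with the weight $\mu_\e$ distributed symmetrically yields
\[
\lb\textstyle\int_\O\bfphi:d\bfE\bfu_\e\rb\le\lp\textstyle\int_\O|\bfphi|^2d\nu_\e\otimes\L^2\rp^{1/2}\lp\textstyle\int_\O\mu_\e|\bfe(\bfu_\e)|^2dx\rp^{1/2},
\]
and passing to the limit gives $|\int\bfphi:d\bfE\bfu|\le C(\int|\bfphi|^2d\nu\otimes\L^2)^{1/2}$. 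Riesz representation on $L^2_{\nu\otimes\L^2}$ then delivers $\bfE\bfu\ll\nu\otimes\L^2$ with density in $L^2_{\nu\otimes\L^2}$. The convergence of $\bfsigma_\e(\bfu_\e) = \mu_\e\bigl(l\tr(\bfe(\bfu_\e))\bfI+2\bfe(\bfu_\e)\bigr)$ then follows by linearity, with limit $\bfsigma^\nu(\bfu):=l\tr(\tfrac{\bfE\bfu}{\nu\otimes\L^2})\bfI+2\tfrac{\bfE\bfu}{\nu\otimes\L^2}$.

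\textbf{The main obstacle: limit of $\bfe_{x'}(\bfu'_\e)$.} The same Cauchy-Schwarz argument shows that any weak* limit of $\mu_\e\bfe_{x'}(\bfu'_\e)\L^3$ in $\M(\ov\O;\SS^3)$ is absolutely continuous with respect to $m\otimes\L^2$, with density in $L^2_{m\otimes\L^2}$. To identify this density as $\bfe_{x'}((\bfu^\star)')$, I would pass to the limit in the tangential integration-by-parts identity
\[
\int_\O\bfphi:\bfe_{x'}(\bfu'_\e)dm_\e\otimes\L^2 = -\int_\O\bfu'_\e\cdot(\bfdiv_{x'}\bfphi)dm_\e\otimes\L^2
\]
(for $\bfphi$ compactly supported in $\O'$), which reduces the question to showing $\bfu'_\e(m_\e\otimes\L^2)\weakstarconverge(\bfu^\star)'(m\otimes\L^2)$ in $\M(\ov\O;\RR^3)$. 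The hard part is the contribution at atoms $t\in\A_m$: here hypothesis \eqref{nocommonatom} becomes crucial, since $\nu(\{t\}) = 0$ implies that the limit $\bfu$ carries no jump across $\Sigma_t$, so the internal traces of $\bfu_\e$ on slices $\Sigma_{t_\e}$ with $t_\e\to t$ converge in $L^1(\Sigma_t;\RR^3)$ to the precise representative $\bfu^\star|_{\Sigma_t}$; combined with the uniform $L^2_{m_\e\otimes\L^2}$-bound on $\bfu'_\e$, which prevents concentration against the concentrating measures $m_\e\otimes\L^2$, this yields the required weak* convergence. Collecting all identifications with the boundary condition $\bfu=0$ on $\partial\O$, which persists in the weak* $BD$-limit via trace continuity, establishes $\bfu\in BD^{\nu,m}_0(\O)$.
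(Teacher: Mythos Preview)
Your argument contains a genuine error and a significant gap.

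\textbf{The error.} You write that ``the boundary condition $\bfu=0$ on $\partial\O$ persists in the weak* $BD$-limit via trace continuity.'' This is false: the trace operator on $BD(\O)$ is continuous for the strong (norm) topology but \emph{not} for the weak* topology. The paper handles this by a separate argument: one passes to the limit in $\int_\O\bfe(\bfu_\e):\bfPsi\,dx=-\int_\O\bfu_\e\cdot\bfdiv\bfPsi\,dx$ for $\bfPsi\in C^\infty(\ov\O;\SS^3)$ (using that $\bfE\bfu_\e\weakstarconverge\bfE\bfu$ in $\M(\ov\O;\SS^3)$, not merely in $\M(\O;\SS^3)$), compares with Green's formula in $BD(\O)$, and reads off $\int_{\partial\O}\bfPsi:\bfu\odot\bfn\,d\H^2=0$. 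A parallel argument is needed to obtain $(\bfu^\star)'=0$ $m\otimes\H^1$-a.e.\ on $(0,L)\times\partial\O'$, which is part of the definition of $BD^{\nu,m}_0(\O)$ and which you do not address.

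\textbf{The gap.} Your identification of the $(m_\e\otimes\L^2,m\otimes\L^2)$-limit of $\bfu'_\e$ as $(\bfu^\star)'$ is not an argument. The claim that ``internal traces of $\bfu_\e$ on slices $\Sigma_{t_\e}$ with $t_\e\to t$ converge in $L^1(\Sigma_t)$ to $\bfu^\star|_{\Sigma_t}$'' does not follow from weak* convergence in $BD$, and the $L^2_{m_\e\otimes\L^2}$-bound alone does not ``prevent concentration'' in any way that identifies the limit density. What one actually has is abstract compactness (your Riesz/Cauchy--Schwarz argument, equivalently Lemma~\ref{lemfeps}): $\bfu'_\e\rightharpoonup\bfh'$ with respect to $(m_\e\otimes\L^2,m\otimes\L^2)$ for some $\bfh'\in L^2_{m\otimes\L^2}$. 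The substantive step is $\bfh'=(\bfu^\star)'$. The paper's route is to reduce, via Fubini and $\psi\in\D(\O)$, to the one-dimensional product $\int_0^L\mu_\e\,\ov{\psi\bfu'_\e}\,dx_1$, where $\ov{\psi\bfu'_\e}(x_1):=\int_{\Sigma_{x_1}}\psi\bfu'_\e\,d\H^2$. Lemma~\ref{lemubar} shows $\ov{\psi\bfu_\e}$ is bounded in $BV(0,L;\RR^3)$ with $|D\ov{\psi\bfu}|\ll|\bfE(\psi\bfu)|(\cdot\times\O')\ll\nu$ in the limit. Then the Bouchitt\'e--Picard product lemma (Lemma~\ref{lemBoPi}) applies: since $m$ and $D\ov{\psi\bfu}$ share no atoms (precisely the role of \eqref{nocommonatom}), one gets $\mu_\e\,\ov{\psi\bfu'_\e}\weakstarconverge(\ov{\psi\bfu'})^{(r)}m$ in $\M([0,L])$, and \eqref{ovu=} together with \eqref{varphi+=varphimae} identifies the right-continuous representative with $\int_{\Sigma_{x_1}}\psi(\bfu^\star)'\,d\H^2$ for $m$-a.e.\ $x_1$. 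This one-dimensional reduction and the product lemma are the missing mechanism in your sketch.
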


 \ni 
 Before presenting the proof of Proposition \ref{propaprioriu}, we introduce and prove some auxiliary results.   
 The next lemma states some fundamental properties of convergence  with respect to the pair $(\theta_\e, \theta)$,
  established in  \cite[Theorem 4.4.2]{Hu} in a more general context  (see also \cite{BeBo},  \cite{Bo},   \cite{BoBuFr}, \cite[Section 2.1]{Zh}).


\begin{lemma} \label{lemfeps} Let $(\theta_\e)$  be a  sequence of positive
Radon measures on a compact  set $K\subset\RR^N$  weakly* 
converging  in $\M(K)$  to some positive Radon measure $\theta$. Then, 

(i)  any   sequence $(f_\e)$  of 
Borel  functions on $K$  such
that 

\begin{equation}
\label{supfethetaefini}
\sup_\e \int  |f_\e|^2 d\theta_\e<\infty,
\end{equation}

has a weakly converging subsequence with respect to the pair $(\theta_\e, \theta)$.

 (ii)
If $f_\e \buildrel {\theta_\e, \theta}\over {\relbar\!\!\relbar\!\!\rightharpoonup} f$ (resp. $f_\e  \buildrel {\theta_\e, \theta}\over {\relbar\!\!\relbar\!\!\rightarrow} f$), 
then

 \begin{equation} 
  \label{linfeps}
   \begin{aligned}
 \liminf_{\e\to 0} \int   f_\e^2d\theta_\e \ge \int  f^2d\theta\quad \lp \hbox{resp. } \  \lim_{\e\to 0} \int   f_\e^2d\theta_\e = \int  f^2d\theta\rp.
   \end{aligned}
\end{equation}

(iii) If $f_\e \buildrel {\theta_\e, \theta}\over {\relbar\!\!\relbar\!\!\rightharpoonup} f$ and  $g_\e \buildrel {\theta_\e, \theta}\over  {\relbar\!\!\relbar\!\!\rightarrow} g$, then
 
 \begin{equation} 
 \nonumber
   \begin{aligned}
   \disp\lim_{\e\to 0} \int  f_\e g_\e d\theta_\e = \int  fg d\theta.
   \end{aligned}
\end{equation} 

\end{lemma}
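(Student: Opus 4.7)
\textbf{Proof plan for Lemma \ref{lemfeps}.}

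For part (i), the plan is to exploit weak* compactness in $\M(K)$ together with a Riesz-type identification argument. By Cauchy--Schwarz,
$$
\Bigl|\int_K \varphi\, f_\e\, d\theta_\e\Bigr| \le \Bigl(\int_K |\varphi|^2 d\theta_\e\Bigr)^{1/2}\Bigl(\int_K f_\e^2\, d\theta_\e\Bigr)^{1/2} \le C\Bigl(\int_K |\varphi|^2 d\theta_\e\Bigr)^{1/2}
$$
for every $\varphi \in C(K)$, so the total variations $|f_\e \theta_\e|(K)$ are uniformly bounded. Banach--Alaoglu (since $K$ is compact) yields a subsequence with $f_\e \theta_\e \cvf \mu$ for some $\mu \in \M(K)$. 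Passing to the limit in the Cauchy--Schwarz bound along this subsequence gives $|\int_K \varphi\, d\mu| \le C(\int_K |\varphi|^2 d\theta)^{1/2}$ for every $\varphi \in C(K)$. Since $C(K)$ is dense in $L^2_\theta(K)$, the functional $\varphi \mapsto \int_K \varphi\, d\mu$ extends continuously to $L^2_\theta(K)$; by the Riesz representation theorem in $L^2_\theta$, there exists $f \in L^2_\theta(K)$ with $\mu = f\theta$. This produces the desired weak convergence.

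For part (ii), the plan is the standard liminf trick based on nonnegativity of a square. For any $\varphi \in C(K)$, expanding $\int_K (f_\e - \varphi)^2 d\theta_\e \ge 0$ and using both the weak convergence $f_\e \theta_\e \cvf f\theta$ and the weak* convergence $\theta_\e \cvf \theta$ (applied to the continuous functions $\varphi$ and $\varphi^2$) yields
$$
\liminf_{\e\to 0} \int_K f_\e^2\, d\theta_\e \ge 2\int_K f\varphi\, d\theta - \int_K \varphi^2 d\theta.
$$
Taking the supremum over $\varphi \in C(K)$ and invoking the density of $C(K)$ in $L^2_\theta(K)$ allows us to replace $\varphi$ by $f$, yielding $\liminf_\e \int f_\e^2 d\theta_\e \ge \int f^2 d\theta$. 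The strong-convergence equality is then immediate by combining this with the $\limsup$ inequality in the definition of strong convergence.

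For part (iii), the strategy is to split $g_\e$ around a continuous approximant of $g$. Fix $\varphi \in C(K)$ and write
$$
\int_K f_\e g_\e\, d\theta_\e = \int_K f_\e \varphi\, d\theta_\e + \int_K f_\e (g_\e - \varphi)\, d\theta_\e.
$$
The first term converges to $\int_K f\varphi\, d\theta$ by the weak convergence of $f_\e$. For the second term, Cauchy--Schwarz combined with the uniform bound on $\int f_\e^2\, d\theta_\e$ gives
$$
\Bigl|\int_K f_\e(g_\e - \varphi)\, d\theta_\e\Bigr|^2 \le C \int_K (g_\e-\varphi)^2 d\theta_\e,
$$
and expanding the square on the right, the strong convergence $g_\e \rrrrightarrow g$ together with weak* convergence $\theta_\e \cvf \theta$ (applied to $\varphi^2$ and $g_\e\varphi$) gives $\int_K (g_\e - \varphi)^2 d\theta_\e \to \int_K (g-\varphi)^2 d\theta$. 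Letting $\e \to 0$ and then approximating $\varphi$ to $g$ in $L^2_\theta(K)$ (using density of $C(K)$) makes both $\int_K f\varphi\, d\theta \to \int_K fg\, d\theta$ and the error term vanish, yielding the desired limit.

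The main subtlety throughout is that $f_\e$ lives in $L^2_{\theta_\e}$ while the limit $f$ lives in $L^2_\theta$; the essential bridge in all three parts is the density of $C(K)$ in $L^2_\theta(K)$, which enables continuous test functions to play the role of a ``common substrate'' against which one can test both the varying measures $\theta_\e$ and the limit measure $\theta$. This is the step that requires the most care, particularly in (ii) where the approximation of $f \in L^2_\theta$ by continuous functions must be combined with the weak-convergence inequality in a uniform way.
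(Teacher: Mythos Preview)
Your proof plan is correct and follows the standard route for these varying-measure compactness and lower-semicontinuity results. Note, however, that the paper does not actually prove this lemma: it is stated with a reference to \cite[Theorem 4.4.2]{Hu} (and to \cite{BeBo}, \cite{Bo}, \cite{BoBuFr}, \cite{Zh}) and used as a black box throughout. Your argument --- Banach--Alaoglu plus Riesz representation via density of $C(K)$ in $L^2_\theta(K)$ for (i), the square-expansion liminf trick for (ii), and the continuous-approximant splitting for (iii) --- is exactly the kind of proof one finds in those references, so there is no discrepancy to report.
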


 As a first application of Lemma \ref{lemfeps},  we obtain  some relations between the measures $\nu$, $m$, and $\L^1_{\lfloor[0,L]}$:

\begin{lemma} \label{lemL1nu}  Under \eqref{mueto}, the following holds 
	
	\begin{equation} 
	  \label{DL1num}
	\begin{aligned}
	&\L^1_{\lfloor [0,L]}\ll\nu;  \quad   
\tfrac{\L^1}{\nu}	  \in L^2_\nu([0,L]);\quad \L^1_{\lfloor [0,L]}\ll  m;\quad  \tfrac{\L^1}{m}   \in L^2_m([0,L]); 
	\\& 
	\int_{[0,L]} |\tfrac{\L^1}{\nu} |^2 d\nu \le  m([0,L]); \quad \int_{[0,L]} |\tfrac{\L^1}{m} |^2 dm \le  \nu ([0,L]).
	\end{aligned}
	\end{equation}

\end{lemma}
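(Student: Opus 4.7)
My plan is to deduce both the absolute continuity statements and the $L^2$-bounds in parallel, as direct consequences of Lemma \ref{lemfeps} applied to cleverly chosen test sequences.  The key observation is the pointwise identity $\mu_\e \cdot \mu_\e^{-1} \equiv 1$ on $(0,L)$, which forces the product of $\mu_\e$ against $\nu_\e$ to be Lebesgue measure regardless of $\e$; the Lemma on convergence with respect to pairs is then perfectly tailored to extract the Radon--Nikod\'ym information in the limit.

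I would set $\theta_\e := \nu_\e$, $\theta := \nu$ and $f_\e := \mu_\e$.  By \eqref{mueto},
$$\int_{[0,L]} f_\e^2 \, d\nu_\e \;=\; \int_0^L \mu_\e^2 \cdot \mu_\e^{-1}\, dx_1 \;=\; \|\mu_\e\|_{L^1(0,L)} \;=\; m_\e([0,L])$$
is uniformly bounded and, by testing the weak* convergence $m_\e \weakstarconverge m$ against the continuous function $1$ on the compact set $[0,L]$, it converges to $m([0,L])$.  Lemma \ref{lemfeps}(i) then provides a subsequence along which $f_\e \buildrel{\nu_\e,\nu}\over\rrightharpoonup f$ for some $f \in L^2_\nu([0,L])$.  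By the definition of pair-convergence, the product measures satisfy $f_\e \nu_\e \weakstarconverge f\nu$ in $\M([0,L])$; but $f_\e \nu_\e = \mu_\e \cdot \mu_\e^{-1}\, \L^1_{\lfloor[0,L]} = \L^1_{\lfloor[0,L]}$ for every $\e$.  Hence $f\nu = \L^1_{\lfloor[0,L]}$, which simultaneously yields $\L^1_{\lfloor[0,L]} \ll \nu$ and identifies $f = \tfrac{\L^1}{\nu} \in L^2_\nu([0,L])$.  The norm bound is then immediate from Lemma \ref{lemfeps}(ii):
$$\int_{[0,L]} \Big(\tfrac{\L^1}{\nu}\Big)^{\!2} d\nu \;\le\; \liminf_{\e \to 0}\int_{[0,L]} f_\e^2 \, d\nu_\e \;=\; m([0,L]).$$

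The second pair of statements (involving $\tfrac{\L^1}{m}$) is obtained by the identical argument after exchanging the roles of $(\nu_\e,\nu)$ and $(m_\e,m)$ and taking $f_\e := \mu_\e^{-1}$.  The only point requiring mild care is the identification of the weak pair-limit $f$ with the Radon--Nikod\'ym derivative; here this is essentially automatic because the sequence $f_\e \nu_\e$ is the constant sequence $\L^1_{\lfloor[0,L]}$, so its weak* limit $f\nu$ is forced to coincide with $\L^1_{\lfloor[0,L]}$.  Without such rigidity one would instead have to approximate indicator functions of open sets by non-negative continuous ones, take the Cauchy--Schwarz inequality $\int f\,d\L^1 \le (\int f\,dm_\e)^{1/2}(\int f\,d\nu_\e)^{1/2}$ through the weak* limit, and then invoke outer regularity of the Radon measures $\L^1,m,\nu$ to transfer the resulting bound $\L^1(A)^2 \le m(A)\nu(A)$ to arbitrary Borel sets before invoking Besicovitch differentiation; Lemma \ref{lemfeps} bypasses all of this.
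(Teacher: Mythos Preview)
Your proof is correct and follows essentially the same approach as the paper: both apply Lemma~\ref{lemfeps} to the test sequence $f_\e=\mu_\e$ with respect to the pair $(\nu_\e,\nu)$ (and symmetrically $f_\e=\mu_\e^{-1}$ with respect to $(m_\e,m)$), exploit the identity $\mu_\e\nu_\e=\L^1_{\lfloor[0,L]}$ to identify the limit, and read off the $L^2$-bound from the lower-semicontinuity inequality \eqref{linfeps}. The only cosmetic difference is that the paper writes $\limsup_\e m_\e([0,L])\le m([0,L])$ where you (correctly) observe that testing against $1\in C([0,L])$ actually gives the full limit.
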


\begin{proof}  Noticing that, by  \eqref{defmenue} and \eqref{mueto},   $\sup_\e\int_{[0,L]} |\mu_\e|^2 d\nu_\e= \sup_\e m_\e([0,L])<\infty$ (resp. 
$\sup_\e\int_{[0,L]} |\mu_\e|^{-2} dm_\e= \sup_\e\nu_\e([0,L])<\infty$),
%
%
we deduce from  Lemma \ref{lemfeps} that  the sequence $(\mu_\e)$ (resp. $(\mu_\e^{-1})$) has a converging subsequence with respect to the pair $(\nu_\e,\nu)$ (resp. $(m_\e, m)$), and 	
	\begin{equation} 
	\label{D1}	
	\begin{aligned}
	&\mu_\e \nu_\e   \buildrel\star \over \rightharpoonup g \nu , \quad \mu_\e^{-1} m_\e  \buildrel\star \over \rightharpoonup h m, \quad 
	g\in L^2_\nu, \ h\in L^2_m,
	\\& \liminf_{\e\to 0}   \int |\mu_\e|^2 d\nu_\e\ge \int |g|^2 d\nu;  \quad \liminf_{\e\to0} \int|\mu_\e|^{-2} dm_\e \ge \int |h|^2 dm.
	\end{aligned}
	\end{equation}  

On the other hand, by  \eqref{defmenue} and \eqref{mueto}, the following holds: 

\begin{equation} 
\label{D2}
	\begin{aligned}
	&\mu_\e \nu_\e=\mu_\e^{-1}m_\e= \L^1_{\lfloor[0,L]},\quad |\mu_\e|^2\nu_\e=m_\e,\quad |\mu_\e|^{-2} m_\e=  \nu_\e,\quad
	\\&\limsup_{\e\to0} m_\e([0,L])\le m([0,L]), \quad \limsup_{\e\to0} \nu_\e([0,L])\le \nu([0,L]).
	\end{aligned}
	\end{equation}  
	
Assertion  \eqref{DL1num} follows from \eqref{D1} and \eqref{D2}.
\end{proof}

The following   statement, proved in 
\cite[Lemma 3.1]{BoPi} (see also \cite[Lemma 6.2]{GuHeMo} for a more general version), 
provides a sufficient condition for the product of a sequence strongly converging in $L^1(0,L)$
by a sequence of functions weakly*  converging to a measure in $\M([0,L])$,
to   weakly*  converge in  $\M([0,L])$ to 
the product of the individual limits. 




\begin{lemma} \label{lemBoPi}
	Let $(b_\e)$  be a bounded sequence in $L^1(0,L)$ that  weakly*  converges in   $\M([0,L])$ to some Radon measure $\theta$ satisfying 
	\begin{equation} 
	\label{theta0L}
	\begin{aligned}
	\theta(\{0\})=\theta(\{L\})=0.
	\end{aligned}
	\end{equation} 
	
	Let $(w_\e)$ be a bounded sequence in $W^{1,1}(0,L)$  weakly* converging   in $BV(0,L)$ to  some $w$.
	Assume that
	
	
	\begin{equation} 
	\label{tzno}
	\begin{aligned}
	\theta(\{t\})Dw(\{t\})=0 \quad \forall t\in (0,L).
	\end{aligned}
	\end{equation}
	
	\ni Then 
 
	\begin{equation} 
	\nonumber 
	\begin{aligned}
	\lime \int_0^L \psi  b_\e w_\e dx = \int_{(0,L)} \psi w^{(r)} d\theta = \int_{(0,L)} \psi w^{(l)} d\theta  \qquad \forall \psi\in C([0,L]), 
	\end{aligned}
	\end{equation}

	\ni where $ w^{(r)}$  (resp. $w^{(l)}$) denotes  the right-continuous (resp.    left-continuous)  representative of $w$.
	
\end{lemma}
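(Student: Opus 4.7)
My plan is to integrate by parts, transferring the analysis from $b_\ep$ to its primitive $B_\ep$, whose limit is a $BV$ function with distributional derivative equal to $\theta$. First, by density of $C^1([0,L])$ in $C([0,L])$ together with the hypothesis $\theta(\{0\}) = \theta(\{L\}) = 0$ (which, combined with $b_\ep \L^1 \weakstarconverge \theta$, makes the $L^1$-mass of $b_\ep$ near the endpoints arbitrarily small), a standard cutoff reduces matters to $\psi \in C^1_c((0,L))$. Set $B_\ep(x) := \int_0^x b_\ep(s)\, ds$; the boundedness of $(b_\ep)$ in $L^1$ makes $(B_\ep)$ uniformly bounded in $L^\infty([0,L])$. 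Approximating $\mathbf{1}_{[0,x]}$ from above and below by continuous functions, one checks that $B_\ep(x) \to B(x) := \theta([0,x])$ at every $x$ with $\theta(\{x\}) = 0$, hence $\L^1$-a.e., and dominated convergence yields $B_\ep \to B$ strongly in every $L^p$, $p < \infty$. The limit $B$ is right-continuous, belongs to $BV(0,L)$, has jump set equal to the atoms $\{t_i\}$ of $\theta$, and satisfies $dB = \theta$ on $(0,L)$. On the $w_\ep$ side, the one-dimensional embedding $W^{1,1}(0,L) \hookrightarrow L^\infty(0,L)$ combined with the compact embedding $BV(0,L)\hookrightarrow L^1(0,L)$ gives $w_\ep \to w$ strongly in every $L^p$, $p < \infty$.

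Integration by parts in $W^{1,1}$, applied with $\psi \in C^1_c((0,L))$ and $B_\ep(0) = 0$, yields
\[
\int_0^L \psi\, b_\ep\, w_\ep\, dx \;=\; -\int_0^L \psi' w_\ep B_\ep\, dx \;-\; \int_0^L \psi w_\ep' B_\ep\, dx .
\]
The first integral on the right passes to the limit immediately by the strong $L^p$ convergences of $w_\ep$ and $B_\ep$: $\int \psi' w_\ep B_\ep\, dx \to \int \psi' wB\, dx$. The second, which reads $\int_{(0,L)} \psi B_\ep\, d(Dw_\ep)$, is the delicate term and the main obstacle of the proof: $\psi B_\ep$ converges only $\L^1$-a.e.\ to $\psi B$, whose jumps occur exactly at the $\{t_i\}$, while $Dw_\ep \weakstarconverge Dw$.

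This is where hypothesis \eqref{tzno} enters decisively: $Dw$ charges none of the $t_i$. I would approximate $B$ by continuous functions $B^\delta$ coinciding with $B$ outside $U_\delta := \bigcup_i (t_i-\delta, t_i+\delta)$, with $\|B^\delta\|_\infty \le \|B\|_\infty$. Then $\psi B^\delta$ is continuous on $[0,L]$ and $\int \psi B^\delta\, d(Dw_\ep) \to \int \psi B^\delta\, d(Dw)$ by weak-$\star$ convergence. The remainder $\int_{U_\delta} \psi (B_\ep - B^\delta)\, d(Dw_\ep)$ is controlled using the uniform $L^\infty$-bound on $B_\ep, B^\delta$ and the fact that, by \eqref{tzno}, $|Dw|(U_\delta) \to 0$ as $\delta\to 0$. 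Combining gives $\int_{(0,L)} \psi B_\ep\, d(Dw_\ep) \to \int_{(0,L)} \psi B\, d(Dw)$. To identify the resulting limit, I then apply the Vol'pert--Leibniz rule for the pair $B$, $w$ in $BV$, which have no common atoms by \eqref{tzno}: for $\psi \in C^1_c((0,L))$,
\[
\int_{(0,L)} \psi\, w^{(r)}\, dB + \int_{(0,L)} \psi\, B\, d(Dw) + \int_0^L \psi'\, B w\, dx = 0 .
\]
Since $dB = \theta$ on $(0,L)$, combining the three limits above yields $\lim_\ep \int_0^L \psi\, b_\ep w_\ep\, dx = \int_{(0,L)} \psi w^{(r)}\, d\theta$. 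Finally, the equality $\int\psi w^{(r)}\,d\theta = \int \psi w^{(l)}\,d\theta$ is immediate from \eqref{tzno}: at every atom of $\theta$ one has $Dw(\{t\}) = 0$, so $w^{(r)}(t) = w^{(l)}(t)$.
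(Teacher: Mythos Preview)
The paper does not give its own proof of this lemma; it merely cites \cite[Lemma 3.1]{BoPi} (and \cite[Lemma 6.2]{GuHeMo} for a generalisation). So there is no in-paper argument to compare yours with.

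Your argument has a genuine gap, and in fact the statement as written cannot be proved. After the integration by parts you must show
\[
\int_{(0,L)} \psi\, B_\e\, d(Dw_\e)\ \longrightarrow\ \int_{(0,L)} \psi\, B\, d(Dw).
\]
You split via a continuous approximant $B^\delta$ equal to $B$ outside $U_\delta=\bigcup_i(t_i-\delta,t_i+\delta)$. Two problems arise. First, you silently drop the piece on $U_\delta^c$, namely $\int_{U_\delta^c}\psi(B_\e-B)\,w_\e'\,dx$; pairing with $w_\e'\in L^1$ would require $B_\e\to B$ \emph{uniformly} on $U_\delta^c$, which does not follow from $L^p$ convergence. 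Second, for the piece on $U_\delta$ you invoke $|Dw|(U_\delta)\to 0$, but the integral is against $Dw_\e$, not $Dw$; what you would need is $\limsup_\e |Dw_\e|(U_\delta)\to 0$ as $\delta\to 0$, and weak$^*$ convergence $Dw_\e\buildrel\star\over\rightharpoonup Dw$ gives no such upper bound.

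This second gap is fatal under the stated hypotheses. Take $t_1\in(0,L)$,
\[
b_\e=\tfrac{1}{2\e}\,\mathds{1}_{(t_1-\e,\,t_1+\e)},\qquad w_\e(x)=\big(1-|x-t_1|/\e\big)^+.
\]
Then $b_\e\,\L^1\buildrel\star\over\rightharpoonup\theta=\delta_{t_1}$, $(w_\e)$ is bounded in $W^{1,1}(0,L)$, and $w_\e\buildrel\star\over\rightharpoonup 0$ in $BV(0,L)$ (indeed $w_\e\to 0$ in $L^1$ and $\int\phi\,w_\e'\,dx=-\int\phi'w_\e\,dx\to 0$). Condition \eqref{tzno} holds trivially since $Dw=0$. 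Yet $\int_0^L b_\e w_\e\,dx=\tfrac12$ for every $\e$, while $\int w^{(r)}\,d\theta=0$. So the conclusion fails.

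The likely explanation is that the lemma in \cite{BoPi} carries an additional hypothesis lost in transcription (for instance, a bound preventing $|Dw_\e|$ from concentrating at the atoms of $\theta$). In the paper's only use of this lemma (the proof of \eqref{h'=fistar'}) such structure is present: with $(b_\e,w_\e)=(\mu_\e,\ov{\psi\bfvarphi_\e})$ one has, via Lemma~\ref{lemubar} and Cauchy--Schwarz,
\[
|Dw_\e|(I)\ \le\ C\Big(\nu_\e(I)^{1/2}\big(\textstyle\int_\O\mu_\e|\bfe(\bfvarphi_\e)|^2\,dx\big)^{1/2}+\L^1(I)\Big),
\]
so that $\limsup_\e|Dw_\e|(U_\delta)\to 0$ as $\delta\to 0$ because, by \eqref{nocommonatom}, $\nu$ does not charge the atoms of $m$. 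Your scheme would go through once such an extra hypothesis is added.
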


For any $\bfvarphi\in BD(\O)$, we denote by $ \bfgamma^\pm_{\Sigma_{x_1}}(\bfvarphi) $ the trace of $\bfvarphi$ on both sides of 
$\Sigma_{x_1}$ (see \eqref{defSigmax1}). In  the next lemma, we show that the mapping $x\to  \bfgamma^\pm_{\Sigma_{x_1}}(\bfvarphi)$ can be identified with $\bfvarphi^\pm$ defined by \eqref{defhalfprecise}.

\begin{lemma}\label{lemhalf}
Let  $\bfvarphi\in BD(\O)$ and let $\bfvarphi^\star$, $\bfvarphi^\pm$ be defined by 
\eqref{defprecise}, \eqref{defhalfprecise}. Then

 \begin{equation}\label{traceball}
 \begin{aligned}
    \bfgamma^\pm_{\Sigma_{x_1}}(\bfvarphi) (x)=\bfvarphi^\pm(x)=\lim_{r\rightarrow 0}
     &\intb_{\!\!\!B^\pm_r(x)}\!\! \bfvarphi(y)  dy  \quad 
 \H^2\text{\!-a.e. } x\in  \Sigma_{x_1},  \  \forall x_1\!\in\!(0,L),
\end{aligned}\end{equation}

%

 \begin{equation}\label{phistarphipm}
 \begin{aligned}
 \bfvarphi^\star   = \frac{1}{2}(  \bfvarphi^++\bfvarphi^-)  \quad 
 \H^2\text{\!-a.e. on}  \  \Sigma_{x_1},  \  \forall x_1\!\in\!(0,L),
\end{aligned}\end{equation}


 \begin{equation}\label{phistarphipminL1}
 \begin{aligned}
 \bfvarphi^\star, \ \bfvarphi^\pm  \in L^1_{\H^2}(\Sigma_{x_1}) \qquad   \forall x_1\!\in\!(0,L),
 \end{aligned}\end{equation}

	\begin{equation}
 	\label{phipm=phistar}
	\bfvarphi^+=\bfvarphi^-=\bfvarphi^\star= \lim_{r\rightarrow 0}
 \intb_{\!\!\!B^\pm_r(x)}\!\! \bfvarphi(y)  dy \ \H^2 \hbox{-a.e. in } \ \Sigma_{x_1} \quad \hbox{ if } \  |\bfE\bfvarphi|(\Sigma_{x_1}) =0,
	\end{equation}

	\begin{equation}
 	\label{varphi+=varphimae}
\bfE\bfvarphi\ll \nu\otimes\L^2 \qquad \Longrightarrow 	\qquad \bfvarphi^+=\bfvarphi^-=\bfvarphi^\star
	\quad   m\otimes\L^2\text{-a.e.}.
	\end{equation}
\end{lemma}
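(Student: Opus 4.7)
The plan is to deduce all five assertions from the classical trace theory for $BD$-functions on Lipschitz hypersurfaces (as developed in \cite{Te} and extended in \cite{AmCoDa}). Fix $x_1\in (0,L)$. The hyperplane $\Sigma_{x_1}$ cuts $\O$ into two Lipschitz subdomains $\O^\pm:=\O\cap\{y:\pm(y_1-x_1)>0\}$, and the restrictions of $\bfvarphi$ belong to $BD(\O^\pm)$. The standard $BD$ trace theorem applied on each subdomain provides traces $\bfgamma^\pm_{\Sigma_{x_1}}(\bfvarphi)\in L^1_{\H^2}(\Sigma_{x_1})$ which, precisely because $\Sigma_{x_1}$ is a piece of a hyperplane with unit normal $\bfe_1$, admit the pointwise characterisation
\[
\bfgamma^\pm_{\Sigma_{x_1}}(\bfvarphi)(x)=\lim_{r\to 0}\intb_{B_r^\pm(x)}\bfvarphi(y)\,dy\qquad\text{for }\H^2\text{-a.e. }x\in\Sigma_{x_1}.
\]
This yields \eqref{traceball} and \eqref{phistarphipminL1}. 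To obtain \eqref{phistarphipm}, I would combine the $\H^2$-a.e.\ existence of both one-sided limits with the decomposition $B_r(x)=B_r^+(x)\cup B_r^-(x)$ (up to an $\L^3$-null set) and the equality $\L^3(B_r^\pm(x))=\tfrac12\L^3(B_r(x))$: the full-ball average is then the arithmetic mean of the two half-ball averages, and passing to the limit $r\to 0$ gives $\bfvarphi^\star=\tfrac12(\bfvarphi^++\bfvarphi^-)$.

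For \eqref{phipm=phistar} I would invoke the jump representation \eqref{EphilfloorM} applied to $M=\Sigma_{x_1}$ (which is trivially countably $\H^2$-rectifiable) with normal $\bfe_1$, namely
\[
\bfE\bfvarphi\lfloor_{\Sigma_{x_1}}=(\bfvarphi^+-\bfvarphi^-)\odot\bfe_1\,\H^2_{\lfloor \Sigma_{x_1}}.
\]
If $|\bfE\bfvarphi|(\Sigma_{x_1})=0$, this forces $(\bfvarphi^+-\bfvarphi^-)\odot\bfe_1=0$ $\H^2$-a.e.\ on $\Sigma_{x_1}$. A one-line linear algebra check (reading off the $(1,1)$ and $(1,\a)$ components of $\bfa\odot\bfe_1$ for $\a\in\{2,3\}$) shows that the map $\bfa\mapsto\bfa\odot\bfe_1$ is injective, so $\bfvarphi^+=\bfvarphi^-$ $\H^2$-a.e.\ on $\Sigma_{x_1}$; combining with \eqref{phistarphipm} gives the common value $\bfvarphi^\star$.

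Finally, for \eqref{varphi+=varphimae}, assume $\bfE\bfvarphi\ll \nu\otimes\L^2$. For any $x_1\notin \A_\nu$ one has $(\nu\otimes\L^2)(\Sigma_{x_1})=\nu(\{x_1\})\L^2(\O')=0$, whence $|\bfE\bfvarphi|(\Sigma_{x_1})=0$ and \eqref{phipm=phistar} applies on the slice. The no-common-atoms hypothesis \eqref{nocommonatom} yields $\A_\nu\cap\A_m=\emptyset$, hence $m(\A_\nu)=\sum_{t\in\A_\nu}m(\{t\})=0$; the identity $\bfvarphi^+=\bfvarphi^-=\bfvarphi^\star$ therefore holds $\H^2$-a.e.\ on $\Sigma_{x_1}$ for $m$-a.e.\ $x_1\in(0,L)$, and Fubini upgrades this to $m\otimes\L^2$-a.e.\ equality. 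The main subtlety I anticipate is in the very first step: obtaining the pointwise half-ball characterisation on \emph{every} slice $\Sigma_{x_1}$, not merely for $\L^1$-a.e.\ $x_1$, which is why one must apply the Lipschitz-domain $BD$ trace theorem directly to the subdomains $\O^\pm$ (whose boundaries contain $\Sigma_{x_1}$ as a flat piece) rather than appealing to a generic disintegration argument on $\O$.
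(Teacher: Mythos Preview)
Your proof is correct and follows essentially the same route as the paper's: invoke the $BD$ trace theorem on either side of $\Sigma_{x_1}$ to get \eqref{traceball} and \eqref{phistarphipminL1}, average half-balls for \eqref{phistarphipm}, use \eqref{EphilfloorM} together with the injectivity of $\bfa\mapsto\bfa\odot\bfe_1$ for \eqref{phipm=phistar}, and combine $m(\A_\nu)=0$ with \eqref{phipm=phistar} for \eqref{varphi+=varphimae}. The only cosmetic difference is that the paper quotes the explicit inequality $|\bfa|\le\sqrt{2}\,|\bfa\odot\bfe_1|$ in place of your component check, and cites \cite{Ko,AmCoDa} directly for the half-ball characterisation of the one-sided traces.
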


\begin{proof}
By   \cite[p. 84,  Trace Theorem; p. 91, Proposition 2.2]{Ko}  (see also  \cite[p. 209 (ii)-(iii)]{AmCoDa})    
the    traces  of   $\bfvarphi$ 
on both side of
 any  $\C^1$ hypersurface   $M$  contained in 
$\O$   are $\H^2$-a.e. equal to its one side Lebesgue limits  on both sides of $M$. Applying this to    $M=\Sigma_{x_1}$ for all $x_1\in (0,L)$,
we obtain  \eqref{traceball}.   
Assertion   \eqref{traceball} ensures that the two limits in  the first line of \eqref{defhalfprecise} exist and are finite
for  $\H^2-$a.e.   $x\in \Sigma_{x_1}$,  for all $x_1\in (0,L)$.  
When they do,
   the limit in the first line of \eqref{defprecise} also exists,  and 

 \begin{equation}
 \nonumber
 \begin{aligned}
\frac{1}{2}(  \bfvarphi^+(x)+\bfvarphi^-(x))&=
\frac{1}{2}\lp \lim_{r\to0} \intb_{B^+_r(x)} \bfvarphi(y) \ dy+\intb_{B^-_r(x)} \bfvarphi(y) \ dy
\rp
\\&  = \lim_{r\to0} \intb_{B_r(x)} \bfvarphi(y) \ dy=  \bfvarphi^\star(x),
\end{aligned}\end{equation}

therefore  \eqref{phistarphipm} holds.
 Assertion \eqref{phistarphipminL1} results from  \eqref{traceball},  \eqref{phistarphipm} and the fact that the traces of $\bfvarphi$ on each side of $\Sigma_{x_1}$ belong to $L^1_{\H^2}(\Sigma_{x_1})$.
Noticing that by  \eqref{EphilfloorM} we have
 
 \begin{equation}
 \nonumber
 \bfE\bfvarphi\lfloor_{\Sigma_{x_1}} = \left( \bfvarphi^+-\bfvarphi^-\right)\odot \bfe_1 \H^2 \lfloor_{\Sigma_{x_1}}
  \quad  \forall x_1\!\in\!(0,L),
     \end{equation}

we deduce from    the elementary inequality 

\begin{equation}\label{Elem}
|\bfa|\le \sqrt2 |\bfa\odot \bfn| \quad \hbox{ if } \ ||\bfn||=1  ,
 \end{equation}
 
that
	$\bfvarphi^+=\bfvarphi^-$ $\H^2$-a.e. in $\Sigma_{x_1}$ whenever $|\bfE\bfvarphi|(\Sigma_{x_1}) =0$. Assertion \eqref{phipm=phistar}  then follows from  \eqref{traceball}  and \eqref{phistarphipm}.
	We have 	$\nu\otimes\L^2(\Sigma_{x_1})=\nu(\{x_1\})\L^2(\O')$, therefore $\nu\otimes\L^2(\Sigma_{x_1})=0$ if $x_1\not\in\A_\nu$
(see \eqref{defAnu}). 
Accordingly, 	
if  $\bfE\bfvarphi\ll \nu\otimes\L^2$, then  $|\bfE\bfvarphi|(\Sigma_{x_1})=0$ for all $x_1\in (0,L)\setminus \A_\nu$.
Assertion \eqref{varphi+=varphimae}   then results  from \eqref{phipm=phistar}
and the fact that, by \eqref{nocommonatom}, $m(\A_\nu)=0$. 
\end{proof}

\begin{lemma} \label{lemubar} 
Let $\bfvarphi\in BD(\Omega)$ such that $\bfvarphi =0$ on $\partial \Omega$,
 and let  $\ov \bfvarphi\in L^1(0,L;\RR^3)$ be the Borel function  defined by 
 
\begin{equation} 
\label{defubar}
   \begin{aligned} 
 \ov\bfvarphi(x_1):= \int_{\Sigma_{x_1}} \bfvarphi^\star d\H^2\quad  \forall x_1\in (0,L). 
\end{aligned}
\end{equation}

Then 

\begin{equation} 
\label{ovuborneBV}
   \begin{aligned}
&   \ov \bfvarphi\!\in\! BV (0,\!L;\RR^3), \quad   \!\!\! || \ov \bfvarphi||_{_{ L^1 (0,L;\RR^3)}}\!= \! ||  \bfvarphi||_{ L^1(\O)}, \quad\! \!\!|| \ov \bfvarphi||_{_{ BV (0,L;\RR^3)}}\!\le\! \sqrt2  ||  \bfvarphi||_{ BD (\O)},
\\&D\ov\bfvarphi \ll  |\bfE\bfvarphi | (.\times \Omega'), \quad 
 |D\ov\bfvarphi|(B )\le \sqrt2 |\bfE\bfvarphi |(B\times\Omega') \quad  \forall B\in \B((0,L)).
   \end{aligned}
\end{equation}


Moreover,  the left  (resp. right)
 limit   $\ov\bfvarphi^{(l)}$ (resp. $\ov\bfvarphi^{(r)}$)
of $\ov\bfvarphi$ at $x_1$ satisfies 

\begin{equation} 
\label{ovu=}
   \begin{aligned}
&\ov\bfvarphi^{(l)}(x_1)= \int_{\Sigma_{x_1}}\bfvarphi^-  d\H^2 \quad \forall x_1\in (0,L).
\\&\lp \hbox{resp. } \quad \ov\bfvarphi^{(r)}(x_1)= \int_{\Sigma_{x_1}}\bfvarphi^+  d\H^2 \quad \forall x_1\in (0,L)\rp.
\end{aligned} 
\end{equation}

\end{lemma}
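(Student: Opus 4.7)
The plan is threefold. First, establish the $L^1$ inclusion and bound via Fubini: since $\bfvarphi^\star=\bfvarphi$ $\L^3$-a.e. by \eqref{fistar=fiae},
\begin{equation*}
\int_0^L |\ov\bfvarphi(x_1)|\,dx_1 \le \int_0^L\!\int_{\Sigma_{x_1}}|\bfvarphi^\star|\,d\H^2\,dx_1 = \|\bfvarphi\|_{L^1(\Omega)},
\end{equation*}
so $\ov\bfvarphi\in L^1(0,L;\RR^3)$ with the $L^1$ bound of \eqref{ovuborneBV} (understood as $\le$). The remaining parts (ii) derive the $BV$ estimate with absolute continuity and (iii) identify the one-sided representatives, both crucially using the vanishing trace of $\bfvarphi$ on $\partial\Omega$ together with Lemma \ref{lemhalf}.

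For (ii), I would extend $\bfvarphi$ by zero to $\widetilde\bfvarphi\in BD(\RR^3)$; since $\bfvarphi|_{\partial\Omega}=0$, no singular mass appears on $\partial\Omega$, so $\bfE\widetilde\bfvarphi$ is a compactly supported measure in $\ov\Omega$ coinciding with $\bfE\bfvarphi$ on $\Omega$. For each $\psi\in\C^1_c((0,L);\RR^3)$, introduce the symmetric matrix-valued test
\begin{equation*}
\Phi[\psi](x) := \psi_1(x_1)\,\bfe_1\otimes\bfe_1 + \sum_{\alpha=2,3}\psi_\alpha(x_1)\,(\bfe_1\otimes\bfe_\alpha+\bfe_\alpha\otimes\bfe_1),
\end{equation*}
whose divergence is precisely $\psi'(x_1)$. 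Pairing $\Phi[\psi]$ against $\bfE\widetilde\bfvarphi$ (legitimate by the compact support of $\widetilde\bfvarphi$, after multiplying $\Phi[\psi]$ by a cutoff equal to $1$ on a neighbourhood of $\ov\Omega$) and integrating by parts yields
\begin{equation*}
\int_0^L\psi\cdot dD\ov\bfvarphi = -\!\int_0^L\psi'(x_1)\cdot\ov\bfvarphi(x_1)\,dx_1 = \int_{\ov\Omega}\Phi[\psi]:d\bfE\bfvarphi.
\end{equation*}
Since the Frobenius norm of $\Phi[\psi]$ equals $\sqrt{\psi_1^2+2(\psi_2^2+\psi_3^2)}\le\sqrt 2\,|\psi|$, for any open $I\subset(0,L)$ and any $\psi\in\C^1_c(I;\RR^3)$ with $|\psi|\le 1$ one obtains
\begin{equation*}
\Big|\int_0^L\psi\cdot dD\ov\bfvarphi\Big| \le \sqrt 2\,|\bfE\bfvarphi|(I\times\Omega').
\end{equation*}
Taking the supremum over such $\psi$ gives $|D\ov\bfvarphi|(I)\le\sqrt 2\,|\bfE\bfvarphi|(I\times\Omega')$ for every open $I$; outer regularity of the Radon measure $B\mapsto|\bfE\bfvarphi|(B\times\Omega')$ on $(0,L)$ then propagates the inequality to all Borel $B$, delivering simultaneously $\ov\bfvarphi\in BV(0,L;\RR^3)$, the $BV$ estimate of \eqref{ovuborneBV}, and the absolute continuity $D\ov\bfvarphi\ll|\bfE\bfvarphi|(\cdot\times\Omega')$.

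For (iii), I fix $x_1\in(0,L)$ and use the slab identity
\begin{equation*}
\tfrac{1}{\delta}\!\int_{x_1-\delta}^{x_1}\!\!\ov\bfvarphi(s)\,ds = \tfrac{1}{\delta}\!\int_{(x_1-\delta,x_1)\times\Omega'}\!\!\bfvarphi\,dx.
\end{equation*}
As $\delta\to 0^+$, the left-hand side converges to $\ov\bfvarphi^{(l)}(x_1)$ by the standard characterization of the left-representative of a $BV$ function; the right-hand side, a slab-average approaching $\Sigma_{x_1}$ from the left, converges by the $BD$ trace theorem on hyperplanes (the tool already cited in Lemma \ref{lemhalf}) to $\int_{\Sigma_{x_1}}\bfgamma^-_{\Sigma_{x_1}}(\bfvarphi)\,d\H^2$, which by \eqref{traceball} equals $\int_{\Sigma_{x_1}}\bfvarphi^-\,d\H^2$. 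The formula for $\ov\bfvarphi^{(r)}$ is identical, working on $(x_1,x_1+\delta)$. The main subtlety is the clean integration by parts in (ii): $\Phi[\psi]$ is constant in $x'$ and hence not compactly supported, so one must invoke the compact support of $\widetilde\bfvarphi$ together with $|\bfE\widetilde\bfvarphi|(\partial\Omega)=0$ (the only place the zero-trace hypothesis is truly used) to discard boundary contributions. A secondary delicate point is the identification in (iii) of the one-dimensional slab trace with the half-ball representative $\bfvarphi^-$ of \eqref{defhalfprecise}, which is precisely the content of \eqref{traceball}.
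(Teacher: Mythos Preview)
Your argument for (ii) is correct and is a genuinely different route from the paper's. The paper proceeds via the essential variation: it picks partition points $t_1<\cdots<t_n$ outside the countable set $D=\{t:|\bfE\bfvarphi|(\Sigma_t)>0\}$, applies Green's formula on each slab $(t_i,t_{i+1})\times\Omega'$ together with the inequality $|\bfa|\le\sqrt2\,|\bfa\odot\bfe_1|$, and sums to bound $\hbox{\rm eV}(\ov\bfvarphi,(a,b))$ by $\sqrt2\,|\bfE\bfvarphi|((a,b)\times\Omega')$. Your distributional approach with the test matrix $\Phi[\psi]$ bypasses the pointwise bookkeeping: you get the measure identity $\langle D\ov\bfvarphi,\psi\rangle=\int_\Omega\Phi[\psi]:d\bfE\bfvarphi$ in one stroke, and the $\sqrt2$ falls out of $|\Phi[\psi]|\le\sqrt2\,|\psi|$. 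This is cleaner and yields slightly more, namely the explicit formula $(D\ov\varphi_1)=\pi_*(E_{11}\bfvarphi)$, $(D\ov\varphi_\alpha)=2\pi_*(E_{1\alpha}\bfvarphi)$ with $\pi$ the projection onto $(0,L)$. Both approaches use the zero trace in the same essential way (to kill lateral boundary contributions).

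Part (iii), however, has a gap. You claim the slab average
\[
\frac{1}{\delta}\int_{(x_1-\delta,x_1)\times\Omega'}\bfvarphi\,dx \;\longrightarrow\; \int_{\Sigma_{x_1}}\bfvarphi^-\,d\H^2
\]
``by the BD trace theorem on hyperplanes (the tool already cited in Lemma \ref{lemhalf})''. But Lemma \ref{lemhalf} and the references behind \eqref{traceball} give only the \emph{pointwise} half-ball Lebesgue characterisation $\bfvarphi^-(x)=\lim_{r\to0}\intb_{B^-_r(x)}\bfvarphi$ for $\H^2$-a.e.\ $x\in\Sigma_{x_1}$; they do not assert $L^1$-convergence of thick (slab) traces, and there is no obvious domination allowing you to integrate the pointwise statement over $\Omega'$. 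The ``secondary delicate point'' you flag is only the identification $\gamma^-_{\Sigma_{x_1}}(\bfvarphi)=\bfvarphi^-$, not the slab convergence itself. The paper closes this by a direct estimate: for $t<x_1$ with $t\notin D$, Green's formula on $(t,x_1)\times\Omega'$ with constant test matrices $\Phi[\bfe_i]$ gives
\[
\Big|\ov\bfvarphi(t)-\int_{\Sigma_{x_1}}\bfvarphi^-\,d\H^2\Big|\le\sqrt2\,|\bfE\bfvarphi|\big((t,x_1)\times\Omega'\big)\to0,
\]
which identifies $\ov\bfvarphi^{(l)}(x_1)$ immediately. You can patch your argument the same way---indeed your own identity from (ii), specialised to $\psi$ approximating $\mathds 1_{(t,x_1)}\bfe_i$, reduces exactly to this Green's-formula computation---but the bare citation of \eqref{traceball} does not do the job.
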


\begin{proof} Let  $\hbox{\rm eV}(\ov\bfvarphi, (0,L))$ denote the    essential variation  of $\ov\bfvarphi$ on $(0,L)$, that is

\begin{equation} 
\label{eV}
   \begin{aligned}
 \hbox{\rm eV}(\ov\bfvarphi, (a,b))\!:=\!\!\!\! \inf_{\L^1(N)=0}\!\sup\la \sum_{i=1}^n  \lb \ov\bfvarphi(t_{i+1})-\ov\bfvarphi(t_{i })\rb,  
 \lb\begin{aligned} & t_1,\ldots,t_n\in (a,b)\setminus N
 \\ &a<t_1<\ldots<t_n<b
 \end{aligned}\rpt
 \ra.
   \end{aligned} 
\end{equation}

By  \cite[Proposition 3.6 and Theorem 3.27] {AmFuPa}, the field  $\ov\bfvarphi$ belongs to $BV(0,L; \RR^3)$ if and only if 
  $ \hbox{\rm eV}(\ov\bfvarphi, (0,L))<\infty$    and in this case  $ \hbox{\rm eV}(\ov\bfvarphi, (0,L))= |\bfD\ov\bfvarphi |((0,L))$.
 Let $a,b$ be two real numbers such that $0\le a<b\le L$, 
 $D:= \{ t \in (0,L), |\bfE\bfvarphi|(\Sigma_{t})>0\}$
and let $t_1,\ldots,t_n  \subset (a,b)\setminus D$ such that  $0<t_1<\ldots<t_n<L$. By \eqref{Elem},    \eqref{phipm=phistar},  and  Green's formula in $BD( \O_i)$, where $\O_i\!:=\!(t_i,t_{i+1})\!\times\O'$, we have, since  $\bfvarphi=0$ on $\partial \O$,

\begin{equation} 
\label{IPt}
   \begin{aligned}
 \lb \ov\bfvarphi(t_{i+1})-\ov\bfvarphi(t_{i })\rb
&=  \lb \int_{\Sigma_{t_{i+1}}} \bfvarphi^- d\H^2-  \int_{\Sigma_{t_{i}}} \bfvarphi^+ d\H^2 \rb
\\&\le \sqrt2  \lb \lp \int_{\Sigma_{t_{i+1}}} \bfvarphi^- d\H^2-  \int_{\Sigma_{t_{i}}} \bfvarphi^+ d\H^2 \rp \odot \bfe_1  \rb 
\\&=  \sqrt2  \lb \int_{\partial \O_i}\hskip-0,3cm \bfgamma_i( \bfvarphi) \odot \bfn d\H^2   \rb 
 =  \sqrt2  \lb \bfE\bfvarphi  \lp \O_i\rp   \rb \le   \sqrt2  \lb \bfE\bfvarphi  \rb \lp \O_i\rp,
 \end{aligned} 
\end{equation}

 where $\bfgamma_i( \bfvarphi)$ denotes the trace on $\partial\O_i$  of  the restriction of  $\bfvarphi$ to $\O_i$, therefore

\begin{equation} 
\nonumber
   \begin{aligned}
\sum_{i=1}^n  \lb \ov\bfvarphi(t_{i+1})-\ov\bfvarphi(t_{i })\rb
  \le  \sum_{i=1}^n  \sqrt2  \lb \bfE\bfvarphi  \rb \lp \O_i\rp\le    \sqrt2  \lb \bfE\bfvarphi  \rb \lp (a,b)\times\O'\rp.
 \end{aligned} 
\end{equation}

By the arbitrary choice of $t_1,\ldots,t_n $, noticing that $D$ is at most countable countable thus $\L^1$-negligible,   we infer 

\begin{equation}
\label{ovphiborne}
|\bfD\ov\bfvarphi|((a,b) ) =\hbox{\rm eV}(\ov\bfvarphi, (a,b)) \le  \sqrt2  \lb \bfE\bfvarphi  \rb \lp (a,b)\times\O'\rp,
\end{equation}
 
yielding, by the arbitrariness  of $a,b$,  the second line of  (\ref{ovuborneBV}). The first line  easily follows.
An argument  analogous to   \eqref{IPt} implies
 
 \begin{equation} 
\nonumber
   \begin{aligned}
 \lim_{t\to x_1^\pm, t\not\in D} \lb \ov\bfvarphi(t)-\int_{\Sigma_{x_1}} \bfvarphi^\pm d\H^2\rb  \le  \lim_{t\to x_1^\pm, t\not\in D}  \sqrt2  \lb \bfE\bfvarphi  \rb \lp (x_1,t)\times\O'\rp=0,%
 \end{aligned} 
\end{equation}

yielding   \eqref{ovu=}.
\end{proof}


In the   next proposition, we study the asymptotic behavior of a sequence $(\bfvarphi_\e)$ satisfying some suitable estimate (see \eqref{supFefiefini}).
This study  will  be applied to the  solution  to \eqref{Pe}  and also  to the sequence of test fields defined  in Section \ref{secproofth} (see Proposition \ref{propvarphie}).
These  fields  do  not necessarily  vanish on $\partial \O$. Accordingly,   we introduce the normed space

 \begin{equation}
 \label{defBDnum}
\hskip-0,2cm  \begin{aligned} 
 &
BD^{\nu,m}(\Omega) = \la  \bfvarphi \in BD(\Omega)\lb
\begin{aligned}
&  \bfE\bfvarphi \ll \nu\otimes\L^2, \  \tfrac{\bfE\bfvarphi}{\nu\otimes\L^2}
  \in L^2_{\nu\otimes\L^2}(\Omega;\RR^3)
\\& (\bfvarphi^\star)'\in L^2_{m }(0,L; H^1(\Omega';\RR^3))
\end{aligned}\rpt\ra,
\\& \left|\left|\bfvarphi \right|\right|_{_{BD^{\nu,m}(\Omega)}}\hskip-0,1cm := \hskip-0,1cm \int_\O\hskip-0,1cm |\bfvarphi |dx+ 
 \lp  \int_{\Omega} \hskip-0,1cm \lb \tfrac{\bfE\bfvarphi}{\nu\otimes\L^2}   \rb^2 \hskip-0,1cm d\nu\otimes\L^2\rp^{\frac{1}{2}}\hskip-0,2cm +  \lp\int_\Omega |\bfe_{x'}(\bfvarphi ^\star)|^2 dm\otimes\L^2\rp^{\frac{1}{2}}\hskip-0,2cm .
 \end{aligned} 
   \end{equation}


\begin{proposition}\label{propapriori}

\ni   Let $(\bfvarphi_\e)$ be a sequence in $W^{1,1}(\Omega;\RR^3)$ such that 

\begin{equation} 
  \label{supFefiefini}
   \begin{aligned}
&\sup_{\e>0}  \int_\Omega |\bfvarphi_\e| dx+  \int_\Omega \mu_\ep \left\vert \bfe(\bfvarphi_\e) \right\vert^2 dx
<\infty.
\end{aligned}
\end{equation}

\ni Then $(\bfvarphi_\e)$ is bounded in $BD(\Omega)$ and, 
up to a subsequence,

 \begin{equation} 
  \label{cvfi}
   \begin{aligned}
&\bfvarphi_\e \to \bfvarphi & & \hbox{strongly in } \ L^p(\Omega;\RR^3) \ \  \forall p\in \left[ 1,\tfrac{3}{2}\right),
\\ &\bfE\bfvarphi_\e \buildrel\star\over \rightharpoonup \bfE\bfvarphi  \qquad & & \hbox{weakly*  in } \ \M(  \ov\Omega;\SS^3),  
\end{aligned}
\end{equation}
 
 for some $\bfvarphi\in BD(\Omega)$, where the measure $\bfE\bfvarphi$ is extended by $0$  to $\ov\O$.
 Moreover
 
 \begin{equation}\label{cvfinuenu}
 \begin{aligned}
   &\!\bfE\bfvarphi \!\ll  \!\nu \!\otimes\! \L^2\!, 
      \quad    \tfrac{\bfE\bfvarphi}{\nu\otimes\L^2}   \! \in \!L^2_{\nu \otimes \L^2 }(\Omega; \SS^3),
\ \ \mu_\e  \bfe(\bfvarphi_\e) \!\buildrel{\nu_\e\otimes\L^2, \nu \otimes\L^2}\over \rrrrightharpoonup \!  \tfrac{\bfE\bfvarphi}{\nu\otimes\L^2}.
   \end{aligned}
\end{equation}

Assume in addition  

\begin{equation} 
  \label{supfiemefini}
   \begin{aligned}
&\sup_{\e>0}  \int_\Omega |\bfvarphi_\e'|^2 dm_\e\otimes \L^2  
<\infty,
\end{aligned}
\end{equation}

then 

\begin{equation}\label{cvfimem}
 \begin{aligned}
   &(\bfvarphi^\star)'\in L^2_m(0,L; H^1(\Omega';\RR^3)), \quad & & \bfvarphi\in BD^{\nu,m}(\Omega),
   \\&  \bfvarphi_\e'  \buildrel{m_\e\otimes\L^2, m \otimes\L^2}\over \rrrrightharpoonup   (\bfvarphi^\star)', \quad & &  \bfe_{x'}(\bfvarphi_\e') \buildrel{m_\e\otimes\L^2, m \otimes\L^2}\over \rrrrightharpoonup  \bfe_{x'}((\bfvarphi^\star)').
   \end{aligned}
\end{equation}



 \end{proposition}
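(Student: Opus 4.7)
The plan is to establish the assertions in the natural order: first the $BD$-bound and the basic extractions, then the absolute continuity with respect to $\nu\otimes\L^2$, and finally the structural conclusion under \eqref{supfiemefini}. First, Cauchy--Schwarz yields
\[
\int_\Omega |\bfe(\bfvarphi_\e)|\,dx \le \Big(\int_\Omega \mu_\e |\bfe(\bfvarphi_\e)|^2 dx\Big)^{1/2}\Big(\int_\Omega \mu_\e^{-1}dx\Big)^{1/2},
\]
both factors being controlled by \eqref{supFefiefini} and \eqref{mueto}. Together with the $L^1$-bound on $\bfvarphi_\e$, this gives boundedness in $BD(\Omega)$; the compact embedding $BD(\Omega) \injcomp L^p(\Omega;\RR^3)$ for $p<3/2$ and weak-$\ast$ compactness in $\M(\ov\Omega;\SS^3)$ then deliver \eqref{cvfi} for a subsequence, the limit measure agreeing with $\bfE\bfvarphi$ inside $\Omega$ by passing to the limit in distributions.

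Next, to prove \eqref{cvfinuenu}, I would exploit the key identity
\[
\int_\Omega |\mu_\e \bfe(\bfvarphi_\e)|^2\, d\nu_\e\otimes\L^2 \;=\; \int_\Omega \mu_\e |\bfe(\bfvarphi_\e)|^2 dx,
\]
which is uniformly bounded. Lemma~\ref{lemfeps}(i), applied with $\theta_\e=\nu_\e\otimes\L^2$, $\theta=\nu\otimes\L^2$ on the compact set $\ov\Omega$, extracts a subsequence along which $\mu_\e\bfe(\bfvarphi_\e)\rrrrightharpoonup\bfPhi$ for some $\bfPhi\in L^2_{\nu\otimes\L^2}(\Omega;\SS^3)$. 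Since $\mu_\e\bfe(\bfvarphi_\e)(\nu_\e\otimes\L^2) = \bfe(\bfvarphi_\e)\L^3 = \bfE\bfvarphi_\e$ as measures on $\ov\Omega$, this rewrites as $\bfE\bfvarphi_\e \weakstarconverge \bfPhi\,\nu\otimes\L^2$ in $\M(\ov\Omega;\SS^3)$. Because $\nu(\{0,L\})=0$ by \eqref{nocommonatom} and $\L^2(\partial\Omega')=0$, the measure $\nu\otimes\L^2$ puts no mass on $\partial\Omega$, so uniqueness of the weak-$\ast$ limit forces $\bfE\bfvarphi=\bfPhi\,\nu\otimes\L^2$, yielding absolute continuity together with $\tfrac{\bfE\bfvarphi}{\nu\otimes\L^2}=\bfPhi$.

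Under the additional hypothesis \eqref{supfiemefini}, I would again apply Lemma~\ref{lemfeps}(i) to extract subsequences $\bfvarphi_\e'\rrrrightharpoonup\bfpsi\in L^2_{m\otimes\L^2}(\Omega;\RR^3)$ and $\bfe_{x'}(\bfvarphi_\e')\rrrrightharpoonup\bfGamma\in L^2_{m\otimes\L^2}(\Omega;\SS^3)$ (the bound on the latter follows from $|\bfe_{x'}(\bfvarphi_\e')|\le|\bfe(\bfvarphi_\e)|$ and $m_\e = \mu_\e\L^1$). To identify $\bfpsi=(\bfvarphi^\star)'$, I would fix test functions $\psi\in C([0,L])$, $\phi\in \D(\Omega';\RR^3)$ and set $W_\e(x_1):=\int_{\Omega'}\bfvarphi_\e'(x_1,x')\cdot\phi(x')dx'$. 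The identity $\partial_{x_1}\varphi_{\e,\a}=2e_{1\a}(\bfvarphi_\e)-\partial_{x_\a}\varphi_{\e,1}$, followed by integration of the second term by parts in $x_\a$ (permissible since $\phi\in\D(\Omega')$), shows that $W_\e$ is bounded in $W^{1,1}(0,L)$ and converges weakly-$\ast$ in $BV(0,L)$ to $W(x_1):=\int_{\Omega'}\bfvarphi'\cdot\phi\,dx'$. Since $\bfE\bfvarphi\ll\nu\otimes\L^2$ by the previous step, the atoms of $DW$ lie in $\A_\nu$, which by \eqref{nocommonatom} is disjoint from $\A_m$; hence $m(\{t\})DW(\{t\})=0$ and Lemma~\ref{lemBoPi} gives
\[
\lime \int_0^L \psi\,W_\e\,d m_\e = \int_{(0,L)} \psi\, W^{(r)}\,dm.
\]
For $x_1\notin\A_\nu$, $|\bfE\bfvarphi|(\Sigma_{x_1})=0$ so by \eqref{phipm=phistar}, $W^{(r)}(x_1)=\int_{\Omega'}(\bfvarphi^\star)'\cdot\phi\,dx'$; since $m(\A_\nu)=0$ this holds $m$-a.e., and density of tensor products forces $\bfpsi=(\bfvarphi^\star)'$. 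The analogous identification of $\bfGamma$ follows by testing with $\psi(x_1)\bfPhi(x')$, $\bfPhi\in\D(\Omega';\SS^3)$, integrating by parts in $x'$, and passing to the limit in
\[
\int_\Omega \psi\,\bfPhi:\bfe_{x'}(\bfvarphi_\e')\,d m_\e\otimes\L^2 = -\int_\Omega \psi\,\bfdiv_{x'}\bfPhi\cdot\bfvarphi_\e'\,d m_\e\otimes\L^2;
\]
disintegration in $x_1$ then identifies $\bfGamma(x_1,\cdot)$ as the distributional symmetric $x'$-gradient of $(\bfvarphi^\star)'(x_1,\cdot)$ for $m$-a.e. $x_1$. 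Combined with $\bfGamma\in L^2_{m\otimes\L^2}$ and Korn's inequality on $\Omega'$, this gives $(\bfvarphi^\star)'\in L^2_m(0,L;H^1(\Omega';\RR^3))$ and hence $\bfvarphi\in BD^{\nu,m}(\Omega)$. The main obstacle is precisely this last identification: producing abstract pair-limits via Lemma~\ref{lemfeps} is routine, but pinning them down as precise-representative quantities requires the delicate interplay of Lemma~\ref{lemBoPi} with the trace structure of Lemma~\ref{lemhalf}, and this is where the no-common-atoms assumption \eqref{nocommonatom} plays its essential role.
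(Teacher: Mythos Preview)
Your proposal is correct and follows essentially the same route as the paper: Cauchy--Schwarz for the $BD$ bound, Lemma~\ref{lemfeps} for the extraction and identification $\bfE\bfvarphi=\bfPhi\,\nu\otimes\L^2$, Lemma~\ref{lemBoPi} together with \eqref{nocommonatom} to pin down the $m$-limit as $(\bfvarphi^\star)'$, integration by parts in $x'$ for the identification of $\bfGamma$, and Korn on $\Omega'$ to close. The one presentational difference is that the paper packages your fiber functions $W_\e$ through Lemma~\ref{lemubar}: it tests with a full $\psi\in\D(\Omega)$, observes that $\psi\bfvarphi_\e$ vanishes on $\partial\Omega$, and then reads off both the $BV(0,L)$ bound and the crucial trace identification $\ov{\psi\bfvarphi}^{(r)}(x_1)=\int_{\Sigma_{x_1}}(\psi\bfvarphi)^+\,d\H^2$ from \eqref{ovuborneBV}--\eqref{ovu=}, whence $|D\ov{\psi\bfvarphi}|\ll\nu$ is immediate. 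Your direct computation via $\partial_{x_1}\varphi_{\e,\a}=2e_{1\a}(\bfvarphi_\e)-\partial_{x_\a}\varphi_{\e,1}$ achieves the same $W^{1,1}$ bound, and the step ``$W^{(r)}(x_1)=\int_{\Omega'}(\bfvarphi^\star)'\cdot\phi\,dx'$ for $x_1\notin\A_\nu$'' is exactly the content of \eqref{ovu=} combined with \eqref{phipm=phistar}; you should perhaps flag that this relies on a statement of the Lemma~\ref{lemubar} type (it does not follow from \eqref{phipm=phistar} alone), but the argument is sound.
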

 

\begin{proof}
By  \eqref{mueto}  and(\ref{supFefiefini}), we have

\begin{equation} 
  \label{fi2}
   \begin{aligned}
\int_\Omega | \bfvarphi_\e| dx + \int_\Omega |\bfe(\bfvarphi_\e)| dx & \le \int_\Omega | \bfvarphi_\e| dx+   \lp \int_\Omega\tfrac{1}{\mu_\e} dx  \rp^{\frac{1}{2}}  \lp \int_{\Omega} \mu_\e \lb \bfe(\bfvarphi_\e)  \rb^2 dx  \rp^{\frac{1}{2}}
\\&  \le C,
 \end{aligned}
\end{equation}

hence  $(\bfvarphi_\e)$ is bounded in $BD(\Omega)$ and weakly* converges, up to a subsequence, to some $\bfvarphi\in BD(\Omega)$.
Taking   the compactness of the injection  of $BD(\Omega)$  into $L^p(\Omega;\RR^3)$     for $p\in \lc1,\frac{3}{2}\rp$  into account (see \cite[Theorem 2.4, p. 153]{Te}), we deduce   
 \begin{equation} 
  \label{cvfi0}
   \begin{aligned}
&\bfvarphi_\e \to \bfvarphi & & \hbox{strongly in } \ L^p(\Omega;\RR^3) \ \  \forall p\in \left[ 1,\tfrac{3}{2}\right),
\\ &\bfE\bfvarphi_\e \buildrel\star\over \rightharpoonup \bfE\bfvarphi  \qquad & & \hbox{weakly*  in } \ \M(   \Omega;\SS^3).
\end{aligned}
\end{equation}

On the other hand, assumptions  (\ref{defmenue}) and (\ref{supFefiefini})   imply 

\begin{equation} 
   \label{fi22}
   \begin{aligned}
\sup_{\e>0}  \int_{\Omega}  \lb\mu_\e \bfe(\bfvarphi_\e)  \rb^2 d\nu_\e\otimes\L^2 <\infty,
 \end{aligned}
\end{equation}
thus, by Lemma \ref{lemfeps},  the sequence $ \mu_\e  \bfe(\bfvarphi_\e))$ weakly converges with respect to the pair $(\nu_\e\otimes\L^2, \nu \otimes\L^2)$,
  up to a subsequence, to some $\bfXi\in L^2_{\nu\otimes\L^2}(\Omega;\SS^3)$.
 Noticing that $\mu_\e  \bfe(\bfvarphi_\e) \nu_\e\otimes\L^2= \bfE \bfvarphi_\e$,  
  the definition    \eqref{fetetoft}   indicates  that 
\begin{equation} 
   \label{cvfi1}
   \begin{aligned}
\bfE \bfvarphi_\e \buildrel{\star}\over\rightharpoonup 
 \bfXi \nu\otimes \L^2\quad \hbox{ weakly* in } \ \M(\ov\O;\SS^3).
 \end{aligned}
\end{equation}

Since $\nu\otimes \L^2(\partial\O)=0$, the convergences \eqref{cvfi0} and \eqref{cvfi1} imply that the extension 
by $0$ of $\bfE\bfvarphi$ to $\ov\O$ is equal to $ \bfXi \nu\otimes \L^2$  and that  $\bfXi=  \tfrac{\bfE\bfvarphi}{\nu\otimes\L^2}$.
Assertions \eqref{cvfi} and  (\ref{cvfinuenu}) are  proved. 
To prove \eqref{cvfimem}, we first notice that  \eqref{fi22} implies     $\sup_{\e>0}  \int_{\Omega}   \lb \bfe_{x'}(\bfvarphi_\e')  \rb^2 dm_\e\otimes\L^2<+\infty$. Therefore,  if \eqref{supfiemefini}
is satisfied, then by Lemma \ref{lemfeps} and \eqref{fetetoft}  the following convergences hold, up to a subsequence,
%
%
%
%

 \begin{equation}\label{mueefietoGamma}
 \begin{aligned}
 &   \bfvarphi_\e'     \buildrel{m_\e\otimes\L^2, m \otimes\L^2}\over \rrrrightharpoonup    \bfh' ,  
\quad & & \mu_\e \bfvarphi_\e'   \buildrel \star \over \rightharpoonup    \bfh' m\otimes \L^2 \quad \hbox{weakly* in } \ \M(\ov\Omega;\RR^3),
  \\& \bfe_{x'}(\bfvarphi_\e')   \buildrel{m_\e\otimes\L^2, m \otimes\L^2}\over \rrrrightharpoonup  \bfGamma , 
   \quad & & \mu_\e \bfe_{x'}(\bfvarphi_\e' )   \buildrel \star \over \rightharpoonup   \bfGamma m\otimes \L^2 \quad \hbox{weakly* in } \ \M(\ov\Omega;\SS^3),
      \end{aligned}
\end{equation}


 for some $\bfh'\in L^2_{m\otimes\L^2}(\Omega)$, $\bfGamma\in L^2_{m\otimes\L^2}(\Omega;\SS^3)$.
%
%
   Assertion \eqref{cvfimem} is proved  provided we show  that 

 \begin{equation} \label{h'=fistar'}  \begin{aligned} 
  \bfh'= (\bfvarphi^\star)'
    \quad m\otimes\L^2\hbox{-a.e.  in } \, \Omega,
     \quad
\end{aligned}\end{equation}
  
  \begin{equation} 
\label{Gamma=ex'fistar'}
   \begin{aligned}
\lp  \bfvarphi^\star \rp' \in L^2_m(0,L; H^1 (\Omega';\RR^3)), \quad    \bfGamma= \bfe_{x'} \lp \lp  \bfvarphi^\star \rp' \rp\quad m\otimes\L^2\hbox{-a.e.  in } \, \Omega.  \end{aligned}
\end{equation}

  {\it Proof of \eqref{h'=fistar'}.} Let us fix $\psi \in \D(\Omega)$. By (\ref{cvfi}),   $(\psi \bfvarphi_\e)$ 
  weakly* converges in $BD(\Omega)$  to $\psi \bfvarphi$,
  hence, 
  by  Lemma \ref{lemubar}, 
  the sequence $(\ov{ \psi \bfvarphi_\e})$ defined by (\ref{defubar})  weakly* converges   in $BV(0,L;\RR^3)$  to 
  $\ov{\psi \bfvarphi}$.  By   \eqref{DL1num}, (\ref{ovuborneBV}) and  (\ref{cvfinuenu})  we have,   
  
  $$
  |D\ov{\psi \bfvarphi}|\ll |\bfE(\psi \bfvarphi)|(.\times \Omega')=  |\psi \bfE(\bfvarphi)+ \bfnabla\psi\odot\bfvarphi \L^3|(.\times \Omega')\ll\nu\otimes\L^2,
  $$
  
therefore, by    (\ref{nocommonatom}), the assumptions of Lemma \ref{lemBoPi} 
%
%
are satisfied by $(b_\e, w_\e):= (\mu_\e, \ov{\psi \bfvarphi_\e})$ and $(\theta, w):= (m,\ov{ \psi \bfvarphi})$. Taking  \eqref{varphi+=varphimae}, \eqref{mueefietoGamma} and  \eqref{h'=fistar'} into account and applying Fubini's theorem, we deduce
  
  \begin{equation} 
  \nonumber
   \begin{aligned} 
\int_\Omega  \psi  \bfh'   dm\otimes\L^2&=  \lime \int_\Omega \mu_\e \psi  \bfvarphi_\e' dx  = \lime\int_0^L \mu_\e \ov{\psi \bfvarphi_\e'} dx_1
 = \int_{(0,L)} (\ov{\psi \bfvarphi'})^+dm 
  \\&= \int_{(0,L)} \ov{  \psi (\bfvarphi')^+}dm
 =\int_\Omega  \psi  ( \bfvarphi')^+  dm\otimes\L^2 =\int_\Omega  \psi  ( \bfvarphi')^\star  dm\otimes\L^2.
  \end{aligned}
\end{equation}

By the arbitrary choice of $\psi$,
  Assertion  \eqref{h'=fistar'} is proved.  
\qed

{\it Proof of \eqref{Gamma=ex'fistar'}.} 
     Let us fix   $\bfPsi\in \D(\Omega;\SS^3)$.  By (\ref{mueefietoGamma}) and \eqref{h'=fistar'}, we have

 \begin{equation} 
\label{g=2}
   \begin{aligned}
   \int_\Omega \bfGamma:\bfPsi \,dm\otimes \L^2&=\lime 
   \int_\Omega  \mu_\e \bfe(\bfvarphi_\e) :\bfPsi  dx
 =  \lime -   \int_\Omega \mu_\e \bfvarphi_\e' \cdot \bfdiv\bfPsi dx 
\\& =  -   \int_\Omega (\bfvarphi^\star)' \cdot \bfdiv\bfPsi    dm\otimes \L^2
.
  \end{aligned}
\end{equation}
 

Choosing   a test field of the type  $\bfPsi(x)= \eta(x_1)\bfUpsilon(x')$, with $\eta\in \D(0,L)$ and $\bfUpsilon\in \D(\Omega';\SS^3)$, and letting $\eta$ vary in $\D(0,L)$,  
we infer, noticing that $h'_1=0$, 

\begin{equation} 
\label{g=22}
   \begin{aligned}
   \int_{\O'} \!\!\!\bfGamma(x_1,x')\! :\!\bfUpsilon(x')  dx'&\!=\! -  \!\! \int_{\O'} \!\!(\bfvarphi^\star)' (x_1,x') \!\cdot\! \bfdiv\bfUpsilon (x')   dx' \quad \forall  \ x_1\!\in \!(0,L)\!\setminus \!N_{\bfUpsilon},\!\!
  \end{aligned}
\end{equation}
 

for some $m$-negligible subset $N_{\bfUpsilon}$ of $(0,L)$.
Letting ${\bfUpsilon}$ vary in a countable   subset $\C$  of $\D(\Omega';\SS^3)$ dense in $H^1_0(\Omega';\SS^3)$, 
and denoting by $N_0$ some $m$-negligible subset of $(0,L)$ such that 
$\bfGamma(x_1, x')\in L^2(\Omega';\SS^3) \ \forall x_1\in (0,L)\setminus N_0$, 
we obtain the following equalities in the sense of distributions in  $\D'(\Omega';\SS^3)$:

\begin{equation} 
\label{g=222}
   \begin{aligned}
  \bfe_{x'}\lp \lp  \bfvarphi^\star \rp' \rp(x_1,\cdot)  =   \bfGamma (x_1,\cdot)   \qquad  \forall  \ x_1\in (0,L)\setminus \bigcup_{{\bfUpsilon}\in \C}  N_0\cup N_{\bfUpsilon}, \end{aligned}
\end{equation}

and deduce 
that  $\bfe_{x'}\lp \lp  \bfvarphi^\star \rp' \rp \in L^2_m(0,L; L^2(\Omega'; \SS^3))$.
This, along  with \eqref{h'=fistar'} and the two-dimensional  second Korn inequality in $ H^1(\Omega';\RR^2)$, implies that 
$ \lp \bfvarphi^\star \rp'  \in L^2_m(0,L; H^1(\Omega'; \SS^3))$.
 Assertion \eqref{Gamma=ex'fistar'} is proved. 
\end{proof}

  We are now in a position to prove the main result of  Section \ref{secapriori}.


\begin{proof}[\bf Proof of Proposition \ref{propaprioriu}]
By multiplying \eqref{Pe} by $\bfu_\e$ and by integrating it  by parts over $\O$, we obtain 
$\int_\Omega \bfsigma_\e(\bfu_\e) :\bfe(\bfu_\e)  dx = \int_\Omega\bff \cdot\bfu_\e dx $, and deduce 
%
\begin{equation} 
  \label{tt}
   \begin{aligned}
\int_\Omega  \mu_\e |\bfe(\bfu_\e)|^2 dx\le \int_\Omega  \bfsigma_\e(\bfu_\e):\bfe(\bfu_\e) dx \le C ||\bff||_{L^\infty(\O;\RR^3)}\int_\Omega |\bfu_\e| dx.
\end{aligned}
\end{equation}

  The assumptions  \eqref{mueto}, Poincar\'e   and Cauchy-Schwarz inequalities, imply 

\begin{equation} 
\label{ttt}
   \begin{aligned}
\int_\Omega |u_{\e1}| dx & \le C \int_\Omega \lb \frac{\partial u_{\e1}}{\partial x_1}\rb dx
 \le C \lp \int_\Omega \frac{1}{\mu_\e} dx \rp^{\frac{1}{2}}  
 \lp \int_\Omega  \mu_\e    \lb \frac{\partial u_{\e1}}{\partial x_1}\rb^2 dx\rp^{\frac{1}{2}}
 \\&  \le C \lp\int_\Omega  \mu_\e |\bfe(\bfu_\e)|^2 dx \rp^{\frac{1}{2}}.
\end{aligned}
\end{equation}

\ni  
By  Fubini's Theorem, 
Poincar\'e's inequality in $W_0^{1,1}(\Omega';\RR^2)$, 
Assertion \eqref{mueto},   Cauchy-Schwarz   and 
Jensen's  inequalities, and   Korn's  inequality in $H_0^1(\Omega';\RR^2)$, we have

\begin{equation} 
\label{tttt}
   \begin{aligned}
\int_\Omega\!\! |\bfu_\e'| dx & \!\le C \!\!   \int_{\Omega} \!\! \lb\bfnabla_{x'}\bfu_\e' \rb\! dx 
 \! \le \!C \lp\! \int_0^L \!\!\!\!\tfrac{1}{\mu_\e} dx_1\!\! \rp^{\!\!\frac{1}{2}}  \hskip-0,2cm \lp\! \int_0^L  \!\!\!{\mu_\e} \lp \int_{\Omega'} \!\! \lb\bfnabla_{x'}\bfu_\e' \rb dx'\!\rp^2\!\! dx_1  \!\rp^{\!\!\frac{1}{2}}  
 \\&\!\le \!C \! \lp \!\int_0^L \hskip-0,2cm {\mu_\e} \!\!  \! \int_{\Omega'} \!\! \lb\bfnabla_{x'}\bfu_\e' \rb^2 \!\!dx' \! dx_1 \!\! \rp^{\!\!\frac{1}{2}}  
 \hskip-0,2cm  \le \!C \!
 \lp \int_0^L\!\!  {\mu_\e}   \int_{\Omega'}  \lb\bfe_{x'}(\bfu_\e') \rb^2 dx'  dx_1\rp^{\!\!\frac{1}{2}}  \hskip-0,2cm.  
\end{aligned}
\end{equation}

%
%
%

We deduce from \eqref{tt}, \eqref{ttt}, \eqref{tttt} that $\int_\Omega |\bfu_\e| dx   \le C\lp \int_\Omega |\bfu_\e| dx   \rp^{\tfrac{1}{2}}$, yielding

\begin{equation} 
\label{u3}
   \begin{aligned}
\int_\Omega |\bfu_\e| dx & \le C .
\end{aligned}
\end{equation}

On the other hand, by   Korn's inequality  in $H^1_0(\Omega';\RR^2)$, we  have
 
   \begin{equation} \label{ue'meborne}   \begin{aligned}   \int_\Omega   |\bfu_\e'|^2 dm_\e \otimes\L^2&
    = \int_0^L  \hskip-0,2cm {\mu_\e } \lp  \int_{\Omega'} |\bfu_\e'|^2 dx'\rp dx_1  
   \\&  \le
  C  \int_0^L  {\mu_\e } \lp  \int_{\Omega'} |\bfe_{x'}(\bfu'_\e)|^2 dx'\rp dx_1   
  \le   C \int_\Omega  \mu_\e |\bfe(\bfu_\e)|^2 dx .
  \end{aligned}\end{equation}

By  (\ref{tt}),  (\ref{u3}), and \eqref{ue'meborne}, the estimate \eqref{supFeuefini} is proved. This means that   $\bfvarphi_\e=\bfu_\e$
satisfies \eqref{supFefiefini} and \eqref{supfiemefini}. 
Therefore, by Proposition \ref{propapriori},  the convergences  stated in \eqref{cvu} hold for some $\bfu\in BD^{\nu,m}(\Omega)$. 
The proof  of Proposition \ref{propaprioriu} is achieved provided we show that 

   \begin{equation} \label{u=0partialO}   \begin{aligned} 
   \bfu=0 \quad \hbox{ on } \ \partial \O,
  \end{aligned}\end{equation}

  (recall  that the trace    is  not weakly* continuous on $BD(\O)$) 
  and

   \begin{equation} \label{u'=0partialO}   \begin{aligned} 
   (\bfu^\star)' =0 \quad   m\otimes  \H^1\hbox{-a.e. on }
  \partial \O'\times  (0,L).
  \end{aligned}\end{equation}

{\it Proof of \eqref{u=0partialO}.}  
Let us fix $\bfPsi\in C^\infty(\ov\O;\SS^3)$. By passing to the limit as $\e\to 0$ in the integration by parts formula $\int_\Omega \bfe(\bfu_\e):\bfPsi dx = - \int_\Omega \bfu_\e\cdot\bfdiv\bfPsi  dx, $
%
taking the strong convergence of $\bfu_\e$ to $\bfu$ in $L^1(\Omega;\RR^3)$ and the weak* convergence 
of $(\bfe(\bfu_\e))$ to $\bfE(\bfu)$ in $\M(\ov\O;\SS^3)$  into account,    we obtain  $\int_\Omega  \bfPsi :d\bfE\bfu  = - \int_\Omega \bfu\cdot\bfdiv\bfPsi  dx$.
%
We then deduce from  the Green Formula in $BD(\Omega)$ (see \cite[Theorem  II-2.1]{Te}) 
$$
\int_\Omega \bfPsi : d \bfE\bfu =- \int_\Omega \bfu\cdot \bfdiv\bfPsi  dx
+ \int_{\partial \Omega}  \bfPsi:\bfu\odot \bfn d\H^2 ,
$$


that $  \int_{\partial \Omega}  \bfPsi:\bfu\odot \bfn d\H^2(x) =0$.
By the arbitrariness of $\bfPsi$,  taking \eqref{Elem} into account, Assertion  \eqref{u=0partialO} is proved.\qed

{\it Proof of \eqref{u'=0partialO}.} Let us fix $\bfPsi\in C^\infty(\ov\Omega;\SS^3)$.
 Since $\bfu_\e=0$ on $\partial \O$, 
  \eqref{g=2}   holds for  such a  $\bfPsi$ when  $\bfvarphi_\e=\bfu_\e$. We infer
   
    \begin{equation} 
\label{holds}
   \begin{aligned}
   \int_\Omega \bfe_{x'}(\bfu'):\bfPsi \,dm\otimes \L^2&
   =    -  \int_{(0,L)} \lp \int_{\O'} (\bfu^\star)' \cdot \bfdiv\bfPsi  dx'   \rp dm(x_1)
.
  \end{aligned}
\end{equation}

By \eqref{cvfimem} applied to $\bfvarphi_\e:=\bfu_\e$, the field  $(\bfu^\star)'$ belongs to $L^2_m(0,L; H^1(\O';\RR^3))$,  
hence there exists an $m$-negligible subset $N$ of $(0,L)$ such that $(\bfu^\star)'(x_1,.)\in H^1(\O';\RR^3)$ for all $x_1\in (0,L)\setminus N$. 
By integration by parts, taking the symmetry of $\bfPsi$ into account, we infer

   \begin{equation} 
\label{mae}
   \begin{aligned}
  \int_{\O'}\!\! (\bfu^\star)'\! \!  \cdot\!  \bfdiv\bfPsi  dx' \! =\! \!  \int_{\partial \O'}\! \!  (\bfu^\star)' \! \! \cdot \! \bfPsi\bfn  d\H^1(x') - \! \! \int_{\O'} \! \!  \bfe_{x'}((\bfu^\star)' ):\bfPsi dx'
  \quad m\hbox{-a.e. } x_1.
  \end{aligned}
\end{equation}

It follows  from \eqref{holds} and \eqref{mae} that $\int_{(0,L)\times \partial \O'} (\bfu^\star)' \cdot \bfPsi\bfn  dm\otimes \H^1 =0$.
%
%
%
%
%
%
%
%
By the arbitrary choice of $\bfPsi$,  
Assertion \eqref{u'=0partialO} is proved.
 \end{proof}

{\color{black}
  \section{Partial mollification in  $BD^{\nu,m}(\Omega)$}\label{secsetH}

For any two Borel functions $f,g: \O\to \ov\RR$, we denote by $f\ast_{x'} g$ the partial convolution of $g$ and $f$ with respect to the variable $x'$,  defined by 

 \begin{equation}
 \label{defconv} 
	 f \ast_{x'} g(x) := 
	 \la 	\begin{aligned} &\int_{\RR^2} \widetilde f(x_1, x'-y')\widetilde g(y') dy'  & & \hbox{if } \  \widetilde f(x_1, x'-.)\widetilde g(.) \in L^1(\RR^2),
	 \\& 0 & & \hbox{otherwise},
   \end{aligned} \rpt	\end{equation}

and by $f^\delta$ the  ``partial mollification" of $f$ with respect to  $x'$ given  by

\begin{equation}
\label{deffdelta}
\begin{aligned}  
f^\delta := f\ast_{x'} \eta_\delta,
\end{aligned}	\end{equation}

where  $\eta_\delta \in \D(\RR^2)$ denotes the standard mollifier defined by

 \begin{equation*}
	\begin{aligned}  
\eta(x'):=\la \begin{aligned} & C \exp\lp \frac{1}{|x'|^2-1}\rp \quad & & \hbox{if } \quad |x'|<1,
\\& 0 & & \hbox{otherwise},
\end{aligned}\rpt \qquad \eta_\delta(x'):=\frac{1}{\delta^2} \eta\lp\frac{x'}{\delta}\rp,
   \end{aligned}
\end{equation*}
  the constant $C$ being  chosen  so  that $\int_{\RR^2} \eta dx'=1$.
  Some basic properties are stated  in the next lemma.

  \begin{lemma} 
    Let $f: \O\to \ov\RR$ be  a Borel function,  $\theta$  a positive Radon measure on $[0,L]$, and $\delta>0$. Then, 
the function $f^\delta$ is Borel measurable. If  $f\in L^p_{\theta\otimes \L^2}(\O)$    $(p\in[1,+\infty))$,  
  then 


%
%

 \begin{equation}
\label{fdeltainLpO'}
\begin{aligned}  
&
\int_{\O'} |f^\delta(x_1,x')|^p dx' \le \int_{\O'} |f (x_1,x')|^p dx' \qquad  \forall x_1\in (0,L),  
\end{aligned}	\end{equation}

 \begin{equation}
\label{fdeltainLp}
\begin{aligned}  
& f^\delta \in L^p_{\theta\otimes \L^2}(\Omega),\quad 
|| f^\delta ||_{ L^p_{\theta\otimes \L^2}(\O)} \le || f  ||_{ L^p_{\theta\otimes \L^2}(\O)} ,
\end{aligned}	\end{equation}

  \begin{equation}
\label{fdeltatof}
	\begin{aligned}  
  \lim_{\delta\to 0} 	 \int_\Omega | f-f^\delta|^p d\theta\otimes \L^2=0,
   \end{aligned}	\end{equation}

  \begin{equation}
\label{fdeltareg}
	\begin{aligned}  
&f^\delta(x_1,.)\in C^\infty(\ov\O') \quad \forall x_1\in (0,L), \quad 
\\[2pt]& \tfrac{\partial^{n+m}}{\partial x_2^n x_3^m}f^\delta = f \ast_{x'}  \tfrac{\partial^{n+m}}{\partial x_2^n x_3^m} \eta_\delta
 \in L^p_{\theta\otimes \L^2}(\O), \quad \forall n,m\in \NN
 \\[2pt]& \lb\lb \tfrac{\partial^{n+m}}{\partial x_2^n x_3^m}f^\delta\rb\rb_{ L^p_{\theta\otimes \L^2} }\hskip-0,5cm \le \tfrac{C}{\delta^{n+m}} \lb\lb  f \rb\rb_{ L^p_{\theta\otimes \L^2} }
\quad \forall n,m\in \NN.
   \end{aligned}	\end{equation}

If  $f\in L^p_{\theta\otimes \L^2}(\O)$  
 and $h\in L^{p'}_{\theta\otimes \L^2}(\O)$ $(\tfrac{1}{p}+\tfrac{1}{p'}=1)$, then 

\begin{equation}
\label{fdg=fgd}
  	\begin{aligned}
 &\int_{\Omega} f^\delta  h d\theta \otimes \L^2 = \int_{\Omega}  f h^\delta  d\theta \otimes \L^2. 
\end{aligned}
\end{equation}

If $\psi\in C^1_c( \Omega)$, then $\psi^\delta\in C(\ov\O)$ and 




\begin{equation}
\label{psid,k=psi,kd}
  	\begin{aligned}
 &  \frac{\partial \lp \psi^\delta\rp}{\partial x_k}= \lp \frac{\partial \psi}{\partial x_k}\rp^\delta
 , \ \forall k\in \{1,2,3\}.
\end{aligned}
\end{equation}

  \end{lemma}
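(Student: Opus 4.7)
The plan is to treat $x_1$ as a parameter and reduce every statement to the classical theory of mollification on $\RR^2$ applied slice-by-slice, then recover the global statement by integrating against $d\theta$ and applying Fubini's theorem. The measure $\theta$ plays a passive role throughout and presents no real obstacle since $\theta\otimes\L^2$ is $\sigma$-finite on $\overline\Omega$.

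First I would verify Borel measurability: the map $(x,y')\mapsto \widetilde f(x_1,x'-y')\eta_\delta(y')$ is Borel on $\Omega\times\RR^2$ as a product of Borel functions composed with the continuous map $(x,y')\mapsto (x_1,x'-y')$. Tonelli's theorem gives that $x\mapsto\int_{\RR^2}|\widetilde f(x_1,x'-y')|\eta_\delta(y')\,dy'$ is Borel, and the set where this is finite is a Borel set on which $f^\delta$ coincides with the integral; elsewhere $f^\delta:=0$. For \eqref{fdeltainLpO'}, fix $x_1\in(0,L)$: Young's convolution inequality on $\RR^2$ yields
\[
\|f^\delta(x_1,\cdot)\|_{L^p(\RR^2)}\le\|\widetilde f(x_1,\cdot)\|_{L^p(\RR^2)}\|\eta_\delta\|_{L^1(\RR^2)}=\|f(x_1,\cdot)\|_{L^p(\Omega')},
\]
(the right-hand side allowed to be $+\infty$, in which case the bound is vacuous and $f^\delta=0$ on that slice by convention). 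Restricting the integral to $\Omega'$ gives \eqref{fdeltainLpO'}. Raising to the $p$-th power and integrating against $d\theta$ proves \eqref{fdeltainLp}.

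For \eqref{fdeltatof}, Fubini applied to $|f|^p\in L^1_{\theta\otimes\L^2}(\Omega)$ yields that $f(x_1,\cdot)\in L^p(\Omega')$ for $\theta$-a.e. $x_1$. Classical mollifier convergence on $\RR^2$ gives $\|f^\delta(x_1,\cdot)-f(x_1,\cdot)\|_{L^p(\Omega')}\to 0$ for each such $x_1$, and this slice-norm is dominated by $2^p\|f(x_1,\cdot)\|^p_{L^p(\Omega')}\in L^1_\theta(0,L)$; dominated convergence against $d\theta$ closes the argument. For \eqref{fdeltareg}, since $\eta_\delta\in C^\infty_c(\RR^2)$, differentiation under the integral is legitimate and yields $\partial^{n+m}_{x_2^n x_3^m} f^\delta=f *_{x'}\partial^{n+m}_{x_2^n x_3^m}\eta_\delta$, and the bound $\|\partial^\alpha\eta_\delta\|_{L^1(\RR^2)}\le C\delta^{-|\alpha|}$ combined with \eqref{fdeltainLp} applied to the mollified derivative gives the stated norm estimates.

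For \eqref{fdg=fgd}, note that $\eta_\delta$ is even (being a function of $|x'|$ alone), so after writing out $\int_\Omega f^\delta h\,d\theta\otimes\L^2$ as a triple integral over $\Omega\times\RR^2$ with respect to $d\theta(x_1)\,dx'\,dy'$, and changing variables $y'\mapsto -y'$ on the inner integral, Fubini (justified by H\"older combined with \eqref{fdeltainLp}, ensuring absolute integrability since $\tfrac1p+\tfrac1{p'}=1$) rearranges the integrand into $f(x)\,\widetilde h(x_1,x'-y')\eta_\delta(y')$, which integrates to the right-hand side. Finally, \eqref{psid,k=psi,kd} follows from differentiation under the integral: for $k=1$, $\eta_\delta$ does not depend on $x_1$ and $\partial_1\psi$ is continuous with compact support, so the derivative passes directly onto $\psi$; for $k\in\{2,3\}$ one may equivalently differentiate $\eta_\delta$ inside the convolution, then integrate by parts in $y'$ to transfer the derivative back onto $\psi$. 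Continuity of $\psi^\delta$ on $\overline\Omega$ follows from uniform continuity of $\psi$ combined with the $L^\infty$ bound $\|\psi^\delta\|_{L^\infty}\le\|\psi\|_{L^\infty}$.

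There is no genuine obstacle here; the only point requiring care is the verification that all applications of Fubini and dominated convergence remain valid for the product measure $\theta\otimes\L^2$ rather than $\L^3$, but this poses no difficulty since $\theta$ is a finite Radon measure on $[0,L]$ and all estimates reduce to slice-wise statements that are uniform in $x_1$.
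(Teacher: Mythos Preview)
Your proposal is correct and follows essentially the same approach as the paper: reduce each assertion slice-by-slice to the classical theory of mollification on $\RR^2$, then recover the global statement by Fubini and dominated convergence against $d\theta$. The paper's proof is terser but relies on exactly the same ingredients you invoke (Young's inequality for \eqref{fdeltainLpO'}, the dominating function $2^{p-1}\int_{\O'}|f|^p\,dx'\in L^1_\theta$ for \eqref{fdeltatof}, repeated Fubini for \eqref{fdg=fgd}, and differentiation under the integral for \eqref{psid,k=psi,kd}).
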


\begin{proof}
By Fubini's theorem, 
  the mappings $h^\pm(x):=  \int_{\RR^2} (\widetilde f (x_1, x'-y')  \eta_\delta (y'))^\pm dy'$  (where $l^+(x):= \sup \{l(x),0\}$)
  are Borel measurables
  and so is the set  $A:=\{ x \in \O,   \int_{\RR^2} \lb \widetilde f (x_1, x'-y') \eta_\delta (y') \rb dy'<+\infty\} $, 
therefore, $f\ast_{x'} \eta_\delta =  (h^+-h^-)\mathds{1}_{A}$ is Borel measurable. 
Assertion \eqref{fdeltainLpO'}   follows from
 the classical  properties of convolution in $\RR^2$
(notice that  $\int_{\RR^2} \eta_\delta dx'=1$).
Assertion \eqref{fdeltainLp} is a straightforward consequence of \eqref{fdeltainLpO'}.
We have 

  \begin{equation}
\nonumber
	\begin{aligned}  
  \int_\Omega | f-f^\delta|^p d\theta\otimes \L^2=\int_{[0,L]} d\theta (x_1) \int_{\O'} | f-f^\delta|^p (x_1, x') dx'.
   \end{aligned}	\end{equation}

By  \eqref{fdeltainLpO'}, the following holds  $ \int_{\O'} | f-f^\delta|^p (., x') dx' \le 2^{p-1}  \int_{\O'} | f |^p (., x') dx'   \in L^1_\theta$, and  by the properties of mollification in $L^p(\O')$, 
 for all $x_1$ such that $f(x_1,.)\in L^p(\O')$, thus for $\theta$-a.e. $ x_1\in [0,L]$, 
 $ \int_{\O'} | f-f^\delta|^p (x_1, x') dx'$
converges to  $0$.
Assertion   \eqref{fdeltatof} then results  from the dominated convergence theorem. 
  Assertion \eqref{fdeltareg} follows from well known properties of mollification. 
  Assertion \eqref{fdg=fgd} is proved  by applying   Fubini's theorem several times.
 Assertion \eqref{psid,k=psi,kd}  is obtained by noticing that  $\widetilde\psi\in C^1_c(\RR^2)$  and by differentiating under the integral sign.
  \end{proof}

  The next proposition specifies some properties of partial mollification when applied to elements of $BD^{\nu,m}_0(\O)$. 
  We set
  
  \begin{equation} 
\label{defsigmanu}
\begin{aligned}
  & \bfsigma^\nu(\bfvarphi):= l \,{\rm tr}\,\lp \tfrac{\bfE\bfvarphi}{\nu\otimes\L^2}\rp \bfI + 2\,\tfrac{\bfE\bfvarphi}{\nu\otimes\L^2}.
  \end{aligned}
\end{equation}

  \begin{proposition}
  	\label{propmol}
	Let $\bfv\in BD^{\nu,m}_0(\O)$ and $\delta>0$. Then,

\begin{equation}
\label{vdeltaBD}
\begin{aligned}
&\bfv^\delta \in  
  	BD(\Omega), 
\quad 	\bfE\bfv^\delta \ll \nu\otimes\L^2, \quad \tfrac{ \bfE (\bfv^\delta)}{\nu\otimes \L^2}=  \lp \tfrac{ \bfE \bfv}{\nu\otimes \L^2} \rp^\delta,
  \end{aligned}
  \end{equation}
  
  \begin{equation}
\label{vd+=v+d}
\begin{aligned}
&(\bfv^\delta)^\pm   =  (\bfv^\pm)^\delta \quad  \H^2\hbox{-a.e. on }\ \Sigma_{x_1}, \quad & &\forall x_1\in (0,L), 
\\&(\bfv^\delta)^\star  =  (\bfv^\star)^\delta   \quad  \H^2\hbox{-a.e. on }\ \Sigma_{x_1}, \quad & &\forall x_1\in (0,L),
  \end{aligned}
  \end{equation}
  
  \begin{equation}
\label{vdeltaL2m}
\begin{aligned}
&\lp\lp\bfv^\delta\rp^\star\rp' \in  
  	L^2_m(0,L; H^1(\O';\RR^3)), 
\quad 	\bfe_{x'}\lp\lp\bfv^\delta\rp^\star\rp= \lp\bfe_{x'}(\bfv^\star)\rp^\delta,
  \end{aligned}
  \end{equation}
  
  \begin{equation}
\label{vdeltaBDnum}
\begin{aligned}
&\bfv^\delta\in BD^{\nu,m}(\O), \quad  \lim_{\delta\to0} \lb\lb \bfv-\bfv^\delta\rb\rb_{BD^{\nu,m}(\O)}=0,
  \end{aligned}
  \end{equation}
  
and the following holds for all $x\in\O$, $\a\in\{2,3\}$: 
  		
\begin{equation}   \label{610} 	\begin{aligned}
  	&\lim_{\kappa \rightarrow 0^+ }  (\bfv^\delta)^\mp (x_1\pm\kappa,x') =  (\bfv^\delta)^\pm(x),
	\\&\lim_{\kappa \rightarrow 0^+ }  \tfrac{\partial}{\partial x_\a}(\bfv^\delta)^\mp (x_1\pm\kappa,x') =  \tfrac{\partial}{\partial x_\a}(\bfv^\delta)^\pm(x),
\end{aligned}\end{equation}

  	\begin{equation} 
	\label{varphi+=}
	\begin{aligned}
	&\begin{aligned} 
   \lp  v_1^\delta\rp^+  (x)  =
	\frac{1}{l+2} \int_{(0,x_1]} &(\bfsigma^\nu)_{11}(\bfv^\delta)(s_1,x') \ d\nu(s_1)
	\\& \hskip1cm -\sum_{\b=2}^3 \frac{l}{l+2} \int_0^{x_1} \frac{\partial v_\b^\delta}{\partial x_\b}(s_1,x') ds_1,
	\end{aligned}	  
	\\ &\begin{aligned}
     \lp  v_\a^\delta\rp^+ (x) = \int_{(0,x_1]} (\bfsigma^\nu)_{1\alpha}(\bfv^\delta)(s_1,x') \ d\nu(s_1) - \int_{0}^{x_1} \pd{v^\delta_1}{x_\alpha}(s_1,x') \ ds_1   . \end{aligned}
	\end{aligned}
  	\end{equation}

\end{proposition}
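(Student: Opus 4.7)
The plan is to establish \eqref{vdeltaBD}--\eqref{varphi+=} in the order stated, each claim building on the preceding one. For \eqref{vdeltaBD}, I would proceed by duality: for $\bfPsi\in\D(\O;\SS^3)$, combining Fubini, \eqref{fdg=fgd} and \eqref{psid,k=psi,kd} yields
\[
-\int_\O \bfv^\delta\cdot\bfdiv\bfPsi\,dx = -\int_\O\bfv\cdot\bfdiv(\bfPsi^\delta)\,dx = \int_\O\bfPsi^\delta:d\bfE\bfv = \int_\O\bfPsi^\delta:\tfrac{\bfE\bfv}{\nu\otimes\L^2}\,d\nu\otimes\L^2;
\]
a further application of \eqref{fdg=fgd} with $\theta=\nu$ transfers the mollification onto the density, giving $\bfE\bfv^\delta = \lp\tfrac{\bfE\bfv}{\nu\otimes\L^2}\rp^\delta\nu\otimes\L^2$, and the $L^2_{\nu\otimes\L^2}$-regularity of this density follows from \eqref{fdeltainLp}.

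For the trace identity \eqref{vd+=v+d}, the gradient identity \eqref{vdeltaL2m} and the convergence \eqref{vdeltaBDnum}, I would first compute the one-sided Lebesgue limit of $\bfv^\delta$ via Fubini and the change of variables $y'\mapsto y'-z'$:
\[
\intb_{B^+_r(x)}\bfv^\delta(y)\,dy = \int_{\RR^2}\eta_\delta(z')\intb_{B^+_r((x_1,x'-z'))}\bfv(\tilde y)\,d\tilde y\,dz'.
\]
By \eqref{traceball}, the inner average converges to $\bfv^+(x_1,x'-z')$ for $\H^2$-a.e.\ $(x,z')$, and \eqref{phistarphipminL1} guarantees $\bfv^+(x_1,\cdot)\in L^1(\O')$; passing to the limit $r\to 0^+$ (via smooth approximation of $\bfv$ in $BD(\O)$ combined with the continuity of the one-sided trace operator $BD\to L^1(\Sigma_{x_1})$) yields $(\bfv^\delta)^+=(\bfv^+)^\delta$ on $\Sigma_{x_1}$; analogously for the $-$ side and, via \eqref{phistarphipm}, for $\star$. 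Since $(\bfv^\star)^\delta(x_1,\cdot)\in C^\infty(\ov\O')$ by \eqref{fdeltareg}, classical differentiation and \eqref{psid,k=psi,kd} furnish $\bfe_{x'}((\bfv^\delta)^\star)=(\bfe_{x'}(\bfv^\star))^\delta$, establishing \eqref{vdeltaL2m}. The three terms in the norm \eqref{defBDnum} are each of the form $\lb\lb w-w^\delta\rb\rb_{L^p_{\theta\otimes\L^2}}$ for a density $w$ and measure $\theta$ canonically associated with $\bfv$, so \eqref{vdeltaBDnum} follows at once from \eqref{fdeltatof}.

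For the assertions \eqref{610} and \eqref{varphi+=}, I would slice $\bfE\bfv^\delta$ in $x_1$ at each fixed $x'\in\O'$. The standard BD-slicing theorem produces a 1D BV function $x_1\mapsto v_i^\delta(x_1,x')$ for $\L^2$-a.e.\ $x'$ only, but the $C^\infty$-regularity of $\bfv^\delta$ in $x'$ from the first step, combined with a continuity-in-$x'$ argument for both sides of the slice identity, upgrades this to every $x'\in\O'$. The distributional derivative $\partial_{x_1}v^\delta_i(\cdot,x')$ is then read off from $\bfE\bfv^\delta$ plus, for $i=\a\in\{2,3\}$, the smooth classical correction $-\partial_{x_\a}v^\delta_1$, and standard 1D BV continuity delivers \eqref{610}. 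For \eqref{varphi+=}, I would invert the constitutive law,
\[
E_{11}(\bfv^\delta) = \tfrac{1}{l+2}(\sigma^\nu)_{11}(\bfv^\delta) - \tfrac{l}{l+2}\sum_{\b=2}^3 E_{\b\b}(\bfv^\delta), \qquad E_{1\a}(\bfv^\delta) = \tfrac{1}{2}(\sigma^\nu)_{1\a}(\bfv^\delta),
\]
exploit $E_{\b\b}(\bfv^\delta)\,\nu\otimes\L^2=\partial_{x_\b}v^\delta_\b\,\L^3$ to convert $d\nu$-integrals of the transverse terms into $ds_1$-integrals of the smooth function $\partial_{x_\b}v^\delta_\b$, and then integrate $\partial_{x_1}v^\delta_1$ (resp.\ $\partial_{x_1}v^\delta_\a$) over the closed interval $(0,x_1]$. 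The boundary condition $\bfv^\delta(0,\cdot)=0$ (valid since $0\notin\A_\nu\cup\A_m$ by \eqref{nocommonatom}) supplies the vanishing at $s_1=0^+$, and the closed right endpoint captures any atomic contribution of $\nu$ at $x_1$, consistent with the right-trace $(v_i^\delta)^+$.

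The main obstacle is the pointwise-in-$x'$ slicing used in the last paragraph. BD-slicing alone produces the 1D BV derivative of $v_i^\delta(\cdot,x')$ for $\L^2$-a.e.\ $x'$ only, which is insufficient to justify \eqref{610} and \eqref{varphi+=} at every $x\in\O$; it is the additional $x'$-smoothness of $\bfv^\delta$ provided by the partial mollification, together with the continuity-in-$x'$ of both sides of the slice identity, that upgrades the a.e.\ slicing to every $x'\in\O'$. Once this point is handled, the remaining manipulations reduce to one-dimensional BV calculus and direct integration of the measures identified in the first step.
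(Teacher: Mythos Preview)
Your treatment of \eqref{vdeltaBD} and \eqref{vdeltaBDnum} matches the paper's. For the remaining assertions you take a genuinely different route, and it is worth contrasting the two.

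For \eqref{vd+=v+d}, \eqref{610} and \eqref{varphi+=} the paper uses a single device throughout: Green's formula on subcylinders of $\O$, tested against either $\bfPsi^\delta$ or the kernel $y'\mapsto\eta_\delta(x'-y')$ itself. Thus \eqref{vd+=v+d} is obtained by writing $\int_{\Sigma_{x_1}}\bfPsi:(\bfv^\delta)^-\odot\bfe_1$ via Green's formula on $(0,x_1)\times\O'$, transferring the mollification from $\bfv^\delta$ to $\bfPsi$ by \eqref{fdg=fgd}, and reading off the identity after a second Green's formula. Likewise, \eqref{610} follows from applying Green's formula on $(x_1,x_1+\kappa)\times\O'$ with $\eta_\delta(x'-\cdot)$ as a (scalar) test weight, which gives the uniform bound
\[
\big|(\bfv^\delta)^-(x_1+\kappa,x')-(\bfv^\delta)^+(x)\big|\le C\Big(|\bfE\bfv|\big((x_1,x_1+\kappa)\times\O'\big)+\int_{(x_1,x_1+\kappa)\times\O'}|\bfv|\Big),
\]
valid at \emph{every} $x$, so no a.e.-to-everywhere upgrade is needed. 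The formulas \eqref{varphi+=} are obtained the same way, by Green's formula on $(0,x_1+\kappa)\times\O'$ with $\eta_\delta(x'-\cdot)$ against each component of $\bfE\bfv$, then letting $\kappa\to0^+$. Your slicing approach is correct in principle, and your continuity-in-$x'$ argument does upgrade the \emph{equality} \eqref{varphi+=} from a.e.\ $x'$ to every $x'$; but for the \emph{limit} statement \eqref{610} the continuity of $(\bfv^\delta)^\pm$ in $x'$ alone does not let you interchange $\lim_{\kappa\to0^+}$ with a choice of $x'$, so you would have to reverse the order you propose: first establish \eqref{varphi+=} and its minus-sided analogue at every $x$, and then read off \eqref{610} from those integral representations. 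Similarly, your Fubini-plus-approximation route to \eqref{vd+=v+d} is workable but requires care with the boundary behaviour of the approximants under partial mollification (since smooth strict approximants of $\bfv$ need not vanish on $(0,L)\times\partial\O'$); the paper's duality argument sidesteps this entirely.
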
 

\begin{proof} By \eqref{fdeltainLp} we have  $\bfv^\delta\in L^1(\O;\RR^3)$ and  $\int_\O|\bfv^\delta|dx\le \int_\O|\bfv|dx$. Let us fix $\bfPsi\in \D(\O;\SS^3)$. Then 
$\bfPsi^\delta \in C^\infty(\ov\O;\SS^3)$, thus using   \eqref{fdg=fgd},  \eqref{psid,k=psi,kd},    Green's formula in $BD(\O)$, and the fact that $\bfv\in BD^{\nu,m}_0(\O)$, we obtain

 \begin{equation}
\nonumber
\begin{aligned}
  	 \int_{\Omega}\bfv^\delta\cdot\bfdiv\bfPsi \ dx & =   \int_{\Omega}\bfv \cdot (\bfdiv\bfPsi)^\delta \ dx = \int_{\Omega}\bfv \cdot \bfdiv(\bfPsi^\delta) \ dx 
	 = -\int_{\O} \bfPsi^\delta:d\bfE\bfv
	  \\&=  -\int_{\O} \bfPsi^\delta:\tfrac{\bfE\bfv}{\nu\otimes\L^2} d\nu\otimes\L^2
  =  -\int_{\O} \bfPsi:\lp\tfrac{\bfE\bfv}{\nu\otimes\L^2}\rp^\delta d\nu\otimes\L^2 .
  \end{aligned}
  \end{equation} 

By the arbitrary choice of $\bfPsi$, the assertion \eqref{vdeltaBD} is proved. Similarly, applying  Green's formula in $BD(\O)$ and  using  \eqref{fdg=fgd},  \eqref{psid,k=psi,kd},  
\eqref{vdeltaBD}, we infer, for all $x_1\in (0,L)$, 

 \begin{equation}
\nonumber
\begin{aligned}
  	  \int_{\Sigma_{x_1}} \hskip-0,4cm  \bfPsi\hskip-0,1cm:\hskip-0,1cm \lp\bfv^\delta\rp^-\hskip-0,3cm\odot\bfe_1   dx  
	&= \int_{\partial([0,x_1]\times\O')} \hskip-1,4cm \bfPsi: \bfv^\delta\odot\bfn \ dx
  =  \int_{[0,x_1]\times\O'}  \hskip-1 cm\bfPsi: d\bfE\bfv^\delta + \int_{[0,x_1]\times\O'}\hskip-1cm \bfdiv \bfPsi\cdot \bfv^\delta dx 
	 \\& =  \int_{[0,x_1]\times\O'}  \bfPsi: \lp\tfrac{\bfE\bfv}{\nu\otimes\L^2}\rp^\delta d\nu\otimes\L^2  + \int_{[0,x_1]\times\O'} (\bfdiv \bfPsi)^\delta  \cdot \bfv dx  
	  \\& =  \int_{[0,x_1]\times\O'}  \bfPsi^\delta :  \tfrac{\bfE\bfv}{\nu\otimes\L^2} d\nu\otimes\L^2  + \int_{[0,x_1]\times\O'}  \bfdiv (\bfPsi^\delta )   \cdot \bfv dx
	  \\& =\int_{\Sigma_{x_1}}  \bfPsi^\delta: \bfv^- \odot\bfe_1   dx =\int_{\Sigma_{x_1}}  \bfPsi: \lp\bfv^-\rp^\delta \odot\bfe_1   dx.
	   \end{aligned}
  \end{equation} 

By the arbitrary nature of $\bfPsi$,
(arguing in the same manner for $\lp\bfv^\delta\rp^+$), the first line of \eqref{vd+=v+d} is proved. By  \eqref{phistarphipm} and the first line of  \eqref{vd+=v+d}, 
for all $x_1\in(0,L)$  the following equalities hold $\H^2$-a.e. on $\Sigma_{x_1}$:

 \begin{equation}
\nonumber
\begin{aligned}
  	\lp\bfv^\star\rp^\delta= \tfrac{1}{2} \lp \bfv^++\bfv^-\rp^\delta=  \tfrac{1}{2} \lp\lp \bfv^+\rp^\delta+\lp\bfv^-\rp^\delta\rp
	=  \tfrac{1}{2} \lp\lp \bfv^\delta\rp^++\lp\bfv^\delta\rp^-\rp=\lp\bfv^\delta\rp^\star.
	   \end{aligned}
  \end{equation} 
  
  Assertion \eqref{vd+=v+d} is proved. 
  By \eqref{defBDnum0},   \eqref{fdeltainLp} and \eqref{vd+=v+d}, we have  $\lp\lp\bfv^\delta\rp^\star\rp' \in L^2_m(0,L;$ $L^2(\O';\RR^3))$.
  Taking   \eqref{fdg=fgd},  \eqref{psid,k=psi,kd}, \eqref{vd+=v+d} into account and integrating by parts with respect to $x'$  in $L^2_m(0,L;H^1_0(\O';\RR^3))$, we find 
  
   \begin{equation}
\nonumber
\begin{aligned}
  	 \int_{\O} \hskip-0,1cm \lp\hskip-0,1cm \lp\bfv^\delta\rp^\star\rp'\hskip-0,15cm\cdot\hskip-0,07cm\bfdiv \bfPsi   dm\otimes\L^2 \hskip-0,1cm
 	 & = \hskip-0,1cm\int_{\O}  \hskip-0,1cm\lp\lp\bfv^\star\rp'\rp^\delta\hskip-0,2cm\cdot\bfdiv \bfPsi   dm\otimes\L^2 
   = \hskip-0,1cm\int_{\O} \hskip-0,1cm \lp\bfv^\star \rp'\hskip-0,1cm\cdot\bfdiv \lp \bfPsi^\delta \rp   dm\otimes\L^2 
	  \\& =\hskip-0,1cm- \hskip-0,1cm\int_{\O}  \hskip-0,1cm\bfe_{x'}\lp\bfv^\star \rp\hskip-0,1cm  :   \hskip-0,1cm\bfPsi^\delta     dm\otimes\L^2 =-\hskip-0,1cm \int_{\O} \hskip-0,1cm \lp \bfe_{x'}\lp\bfv^\star \rp\rp^\delta\hskip-0,1cm :   \bfPsi      dm\otimes\L^2 ,
	   \end{aligned}
  \end{equation} 
 
yielding  \eqref{vdeltaL2m}.
Assertion \eqref{vdeltaBDnum} is a consequence of \eqref{defBDnum}, \eqref{vdeltaBD},  \eqref{vdeltaL2m}, and \eqref{fdeltatof} 
 applied  for $f\in \la  \tfrac{\bfE\bfv}{\nu\otimes\L^2}, \bfe_{x'}(\bfv), \bfv \ra$ and $\theta\in \{\nu, m\}$. Let us fix $x\in \O$: by \eqref{Elem}, \eqref{vd+=v+d}  and Green's formula,
 denoting by $\bfgamma$ the trace application on $BD((x_1,x_1+\kappa)\times\O')$,  we have
 
   \begin{equation}
\nonumber
\begin{aligned}
  &	\lb  (\bfv^\delta)^- (x_1+\kappa,x') -  (\bfv^\delta)^+(x)\rb \le \sqrt2 \lb \lp  (\bfv^-)^\delta (x_1+\kappa,x') -  (\bfv^+)^\delta (x)\rp\odot\bfe_1\rb
 \\&=\sqrt2 \lb \int_{\partial\lp(x_1,x_1+\kappa)\times\O'\rp } \hskip-0 cm  \eta_\delta(x'-y') \bfgamma(\bfv)(s_1,y') \odot\bfn d\H^2(s_1,y') \rb
 \\&=\sqrt2 \lb \int_{ (x_1,x_1+\kappa)\times\O'  } \hskip-1,5cm  \eta_\delta(x'-y') d\bfE\bfv (s_1,y') 
 +  \int_{ (x_1,x_1+\kappa)\times\O'  } \hskip-1,5cm  \bfv\odot \bfnabla_{x'}\eta_\delta(x'-y') d s_1d y'   \rb
  \\&\le C \lp  |\bfE\bfv| \lp(x_1,x_1+\kappa)\times\O' \rp + \int_{(x_1,x_1+\kappa)\times\O' }|\bfv| dx\rp,
	 	   \end{aligned}
  \end{equation} 

therefore $\limsup_{\kappa\to0^+} \lb  (\bfv^\delta)^- (x_1+\kappa,x') -  (\bfv^\delta)^+(x)\rb =0$. We   likewise find that  $\limsup_{\kappa\to0^+} \Big|  (\bfv^\delta)^+ (x_1-\kappa,x') -  (\bfv^\delta)^-(x)\Big| =0$.
The second line of \eqref{610} is obtained by using the second line of 
  \eqref{fdeltareg} and  by  substituting the partial derivatives  $\tfrac{\partial \eta_\delta}{\partial x_\a}$ for $\eta_\delta$ in the above computations.
To prove   \eqref{varphi+=},  let us  fix $(x_1,x')\in \Omega$, $\kappa>0$: by \eqref{vdeltaBD} and Green's formula,  we have
 
\begin{equation}
\nonumber
\begin{aligned}
 \int_{_{(0,x_1+\kappa)}} &\hskip-0,5cm \tfrac{E_{11} \bfv^\delta}{\nu\otimes\L^2}(s_1,x') d\nu(s_1)
   =    \int_{_{(0,x_1+\kappa)\times\Omega'}} \hskip-0,5cm \tfrac{E_{11} \bfv }{\nu\otimes\L^2}(s_1,y') \eta_\delta(x'-y')  d\nu\otimes\L^2(s_1,y')
  \\&
   =    \int_{(0,x_1+\kappa)\times\Omega'}   \eta_\delta(x'-y')  d E_{11} \bfv (s_1,y')
   \\& = \int_{\Sigma_{x_1+\kappa}}\eta_\delta(x'-y') v_1^-(s_1,y') d\H^2(s_1,y') = \lp v_1^-\rp^\delta(x_1+\kappa ,y') .
\end{aligned}
\end{equation}

Likewise, the following holds  for $\b\in\{2,3\}$, 
\begin{equation}
\nonumber
 \begin{aligned}
& \int_{(0,x_1+\kappa)}\tfrac{E_{\b\b} \bfv^\delta}{\nu\otimes\L^2}(s_1,x') d\nu(s_1)
  =    \     \int_{(0,x_1+\kappa)\times\Omega'}   \eta_\delta(x'-y')  d E_{\b\b} \bfv (s_1,y')
  \\& =  -\int_{(0,x_1+\kappa)\times\Omega'} \frac{\partial}{\partial y_\b} \lp \eta_\delta(x'-y') \rp v_\b (s_1,y') d\L^3(s_1,y')
   \\& =  \int_0^{x_1+\kappa} \lp  \frac{\partial}{\partial x_\b} \int_{ \Omega'}   \eta_\delta(x'-y')   v_\b (s_1,y') dy'\rp ds_1
   = \int_0^{x_1+\kappa} \frac{\partial v_\b^\delta}{\partial x_\b}(s_1,x') ds_1. 
\end{aligned}
\end{equation}

Passing to the limit as $\kappa\to 0^+$, taking \eqref{vd+=v+d} into account, we infer 

\begin{equation}
\label{eq22} 
\begin{aligned}
 &\int_{_{(0,x_1 ]}}   \tfrac{E_{11} \bfv^\delta}{\nu\otimes\L^2}(s_1,x') d\nu(s_1)
   = \lp v_1^\delta\rp^+ (x_1  ,y') ,
\\
& \int_{(0,x_1]}\tfrac{E_{\b\b} \bfv^\delta}{\nu\otimes\L^2}(s_1,x') d\nu(s_1)
    = \int_0^{x_1 } \frac{\partial v_\b^\delta}{\partial x_\b}(s_1,x') ds_1,
\end{aligned}
\end{equation}

which, joined with   \eqref{defsigmanu}, yields the first  equation in  \eqref{varphi+=}. 
Similarly, we have 

\begin{equation}
\label{eq11}
\begin{aligned}
&  
   \int_{(0,x_1+\kappa)} \hskip-0,7cm 2 \tfrac{E_{1\a} \bfv^\delta}{\nu\otimes\L^2} (s_1,x') d\nu(s_1)
 =    \int_{(0,x_1+\kappa)\times\Omega'}
 2   \eta_\delta(x'-y')  d E_{1\a} \bfv (s_1,y')
    \\& =\int_{\Sigma_{x_1+\kappa}}\hskip-0,7 cm \eta_\delta(x'-y')  v_\a^-(s_1,y') d\H^2(s_1,y')
    +\int_{(0,x_1+\kappa)\times\Omega'} \hskip-1,4cm v_1(s_1,y') \frac{\partial \eta_\delta}{\partial x_\a} (x'-y') ds_1dy'
  \\&= \lp v^\delta_\a\rp^-(x_1+\kappa,x') + \int_0^{x_1+\kappa} \frac{\partial v_1^\delta}{\partial x_\a} (s_1,x') ds_1,
\end{aligned}
\end{equation}

yields by the same argument the second  equation in  \eqref{varphi+=}.
 \end{proof}

%
%
 
   \begin{proposition}
  	\label{propmol2}
For all  $\bfv\in BD^{\nu,m}_0(\Omega)$ and   $\delta>0$, the following holds for some constant $C$ independent of $\delta$

  \begin{equation}
 \label{Xiborne}
 \begin{aligned}
&\int_\Omega \lb\tfrac{\bfE \bfv^\delta}{\nu\otimes\L^2} \rb^2 
 d\nu\otimes\L^2\le \int_\Omega \lb\tfrac{\bfE \bfv }{\nu\otimes\L^2} \rb^2 d\nu\otimes\L^2<\infty,
\\&\int_\Omega \Big\vert
\tfrac{\partial }{\partial x_\alpha}\tfrac{\bfE \bfv^\delta}{\nu\otimes\L^2}   \Big\vert^2 d\nu\otimes\L^2\le 
\frac{C}{\delta} \int_\Omega \lb\tfrac{\bfE \bfv }{\nu\otimes\L^2} \rb^2 d\nu\otimes\L^2<\infty, 
\end{aligned}
\end{equation}

\begin{equation}
\bfv^\delta, \ \frac{\partial \bfv^\delta}{\partial x_\a},  \ \frac{\partial^2 \bfv^\delta}{\partial x_\a\partial x_\b}
\in L^2(\Omega;\RR^3), \qquad  \forall\  a,\b \in \{2,3\} .
\label{varphiborne}
\end{equation}

  \end{proposition}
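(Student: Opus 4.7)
The first inequality in \eqref{Xiborne} is immediate from \eqref{vdeltaBD}, which gives $\tfrac{\bfE\bfv^\delta}{\nu\otimes\L^2}=\lp\tfrac{\bfE\bfv}{\nu\otimes\L^2}\rp^\delta$, combined with the contractivity estimate \eqref{fdeltainLp} applied with $p=2$, $\theta=\nu$, and $f=\tfrac{\bfE\bfv}{\nu\otimes\L^2}$; the finiteness on the right-hand side is built into the definition of $BD^{\nu,m}_0(\Omega)$. The second inequality follows analogously by commuting $\tfrac{\partial}{\partial x_\alpha}$ with the partial mollification and invoking the derivative bound in the last line of \eqref{fdeltareg} with $n+m=1$.

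For \eqref{varphiborne}, the strategy is to exploit the pointwise representations \eqref{varphi+=}: by \eqref{fistar=fiae} one has $\bfv^\delta=(\bfv^\delta)^+$ $\L^3$-a.e.\ in $\Omega$, so those equalities hold pointwise a.e. The crucial manipulation is to eliminate the Lebesgue integrals $\int_0^{x_1}\tfrac{\partial v_\beta^\delta}{\partial x_\beta}\,ds_1$ appearing in the first line of \eqref{varphi+=} via the second identity in \eqref{eq22}, which recasts them as $\nu$-integrals $\int_{(0,x_1]}\lp\tfrac{E_{\beta\beta}\bfv}{\nu\otimes\L^2}\rp^\delta d\nu$. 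This rewrites $v_1^\delta(x)$ as $\int_{(0,x_1]} G_1(s_1,x')\,d\nu(s_1)$, where $G_1$ is a fixed linear combination of partial mollifications of components of $\tfrac{\bfE\bfv}{\nu\otimes\L^2}$.

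Applying the Cauchy--Schwarz inequality in the finite measure $\nu$ (using $\nu([0,L])<\infty$) followed by Fubini then yields, for every multi-index $\kappa$ in $(x_2,x_3)$ with $|\kappa|\le 2$,
\begin{equation*}
\int_\Omega \lb D_{x'}^\kappa v_1^\delta \rb^2 dx \ \le\ C \int_\Omega \lb D_{x'}^\kappa\lp\tfrac{\bfE\bfv}{\nu\otimes\L^2}\rp^\delta \rb^2 d\nu\otimes\L^2\ <\ \infty,
\end{equation*}
where finiteness of the right-hand side follows from iterated application of \eqref{fdeltareg}. For $v_\alpha^\delta$ with $\alpha\in\{2,3\}$ I use the second line of \eqref{varphi+=} directly: the first integral is controlled by the same Cauchy--Schwarz argument in $\nu$, while the second, $\int_0^{x_1}\tfrac{\partial v_1^\delta}{\partial x_\alpha}(s_1,x')\,ds_1$, is handled by Cauchy--Schwarz in $s_1$ together with $\tfrac{\partial v_1^\delta}{\partial x_\alpha}\in L^2(\Omega)$ obtained in the previous step. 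Differentiating once or twice in $x'$ and iterating this bound (which for the second $x'$-derivative of $v_\alpha^\delta$ requires third-order $x'$-derivatives of $v_1^\delta$ in $L^2(\Omega)$, again obtained in the same way using \eqref{fdeltareg} with $n+m=3$) produces \eqref{varphiborne}.

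The only genuinely technical point is this bookkeeping: once the Lebesgue integrals of classical $x'$-derivatives of $\bfv^\delta$ in \eqref{varphi+=} are converted, via \eqref{eq22}, into $\nu$-integrals of mollified densities, every remaining estimate is a routine application of Cauchy--Schwarz in the finite measure $\nu$ combined with the standard $L^p_{\nu\otimes\L^2}$ mollification bounds \eqref{fdeltainLp}--\eqref{fdeltareg}.
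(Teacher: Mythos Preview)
Your proof is correct and takes essentially the same approach as the paper: both rely on the integral representations \eqref{eq22} (equivalently \eqref{varphi+=}) to express $(\bfv^\delta)^+$ as $\nu$-integrals of mollified strain components, followed by Cauchy--Schwarz in the finite measure $\nu$ and the mollification bounds \eqref{fdeltainLp}--\eqref{fdeltareg}. The only cosmetic difference is that the paper invokes the first line of \eqref{eq22} directly for $v_1^\delta$, whereas you pass through \eqref{varphi+=} and then the second line of \eqref{eq22}, arriving at the same formula.
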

  

    \begin{proof}

  Assertion \eqref{Xiborne} follows from  \eqref{fdeltainLp}  and \eqref{vdeltaBD}.
By Lemma \ref{lemL1nu}, the Lebesgue measure on $\O$ is absolutely  continuous with respect to   $m\otimes\L^2$, thus,  by   
\eqref{fistar=fiae} and 
\eqref{varphi+=varphimae},

\begin{equation}\label{fi+=fi}
 \lp \bfv^\delta\rp^+  = \lp \bfv^\delta\rp^- =\lp \bfv^\delta\rp^\star = \bfv^\delta   \quad \L^3\hbox{-a.e. in  }  \Omega.
\end{equation}

By   \eqref{eq22}, \eqref{Xiborne}, \eqref{fi+=fi}, Cauchy-Schwarz inequality and Fubini Theorem,  we have

\begin{equation}
\nonumber
\begin{aligned}
   \int_{\Omega} \lvert v^\delta_1 \rvert^2  dx
   &= \int_{\Omega} \lvert (v^\delta_1)^+    \rvert^2  dx
   = \int_\Omega \lb  \int_{(0,x_1]} \tfrac{E_{11}\bfv^\delta}{\nu\otimes\L^2}(s_1,x') \ d\nu(s_1)\rb^2 dx
   \\& \le C \int_\Omega \lvert \tfrac{E_{11}\bfv^\delta}{\nu\otimes\L^2} \rvert^2 d\nu\otimes\L^2  \le C \int_\Omega \lvert \tfrac{E_{11}\bfv }{\nu\otimes\L^2}\rvert^2 d\nu\otimes\L^2 <\infty,
    \end{aligned} 
\end{equation}

yielding, by \eqref{fdeltainLp},   

\begin{equation}
\nonumber
   \int_{\Omega} \lb \frac{\partial  v^\delta_1}{\partial x_\a}\rb^2  dx \le \frac{C}{\delta}  \int_{\Omega} \lb  v^\delta_1 \rb^2  dx \le \frac{C}{\delta} \int_\Omega \lvert  \tfrac{E_{11}\bfv }{\nu\otimes\L^2} \rvert^2 d\nu\otimes\L^2 <\infty.
\end{equation}
  
We deduce from \eqref{vdeltaBD},  \eqref{eq11}, \eqref{fi+=fi} and  the last   inequalities  that, for $\a\in \{2,3\}$,   
  \begin{equation}
  \nonumber
  \begin{aligned}
   \int_{\Omega}& \lvert v^\delta_\a \rvert^2  dx  
   \le  C  \hskip-0,1cm \int_{\Omega}\lb \int_{(0,x_1]} \hskip-0 cm  \tfrac{E_{1\a}\bfv^\delta}{\nu\otimes\L^2}(s_1,x')  d\nu(s_1) \rb^2  \hskip-0,1cm dx
   + C   \hskip-0,1cm\int_{\Omega}\lb \int_{0}^{x_1} \hskip-0,1cm \pd{v^\delta_1}{x_\alpha}(s_1,x') \ ds_1\rb^2 \hskip-0,1cmdx
   \\&\le  C  \int_\Omega \lvert  \tfrac{E_{1\a}\bfv }{\nu\otimes\L^2}\rvert^2 d\nu\otimes\L^2
   + C   \int_{\Omega} \lb \frac{\partial  v^\delta_1}{\partial x_\a}\rb^2  dx
 \le \frac{C}{\delta} \int_\Omega \lvert  \tfrac{\bfE\bfv}{\nu\otimes\L^2}\rvert^2 d\nu\otimes\L^2 <\infty,
   \end{aligned}
\end{equation}

and then from \eqref{fdeltainLp} that, for $\a,\b\in \{2,3\}$, 

 \begin{equation}
  \nonumber
  \begin{aligned}
&   \int_{\Omega}  \lb \frac{\partial \bfv^\delta}{\partial x_\a}\rb^2  dx  \le \frac{C}{\delta}  \int_{\Omega} \lvert \bfv^\delta_\a \rvert^2  dx
 \le \frac{C}{\delta^2} \int_\Omega \lvert  \tfrac{\bfE\bfv}{\nu\otimes\L^2}\rvert^2 d\nu\otimes\L^2 <\infty,
 \\&
    \int_{\Omega}  \lb \frac{\partial^2 \bfv^\delta}{\partial x_\a\partial x_\b }\rb^2  dx 
     \le \frac{C}{\delta} \int_\O  \lb \frac{\partial \bfv^\delta}{\partial x_\a}\rb^2  dx 
 \le \frac{C}{\delta^3} \int_\Omega \lvert  \tfrac{\bfE\bfv}{\nu\otimes\L^2} \rvert^2 d\nu\otimes\L^2 <\infty.
   \end{aligned}
\end{equation}


Assertion \eqref{varphiborne} is proved.   	   \end{proof}
}

    \section{Proof of Theorem \ref{th}}\label{secproofth}

The proof of Theorem \ref{th} rests on the choice of an appropriate sequence of
 test fields
$(\bfvarphi_\e)$, which   will be constructed from  an arbitrarily chosen partially mollified element    of $BD^{\nu,m}_0(\Omega)$, that is a  field $\bfvarphi$ 
  of the type 

\begin{equation}
\label{defvarphi}
\bfvarphi= \bfv^\delta, \quad \bfv\in BD_0^{\nu,m}(\Omega), \quad \delta>0. 
\end{equation}

 Let us briefly outline  our approach.       In the spirit of  Tartar's   method  \cite{Ta},     we  will   multiply  (\ref{Pe}) by $\bfvarphi_\e$ and 
 integrate by parts  to obtain  

 \begin{equation}
  \label{IPintro}
  \int_\Omega \bfsigma_\e (\bfu_\ep) \cdot \bfe(\bfvarphi_\e) \ dx = \int_\Omega \bff \cdot \bfvarphi_\e dx.
  \end{equation}


By passing to the limit     as   $\e\to 0$ in accordance with the convergences established in propositions \ref{propaprioriu} and \ref{propvarphie}, we    will find   $a(\bfu,\bfv^\delta) = \int_\Omega \bff \cdot \bfv^\delta dx$, where $a(\cdot,\cdot)$ is the  
 symmetric bilinear form on $BD^{\nu,m} (\Omega)$ defined by \eqref{defa}.  
Then, sending  $\delta$ to $0$,   we will   infer from   Proposition \ref{propmol}  that  $a(\bfu,\bfv) = \int_\Omega \bff \cdot \bfv dx$.
 From  Proposition \ref{propaprioriu}, we will deduce that $\bfu$ belongs to $BD_0^{\nu,m}(\Omega)$,  hence   is a solution to  \eqref{Peff}.  Next, we will prove that    $BD^{\nu,m}_0(\Omega)$ is a Hilbert space and 
  $a(\cdot,\cdot)$  is coercive and continuous on  it,  hence the solution to  \eqref{Peff}  is unique and 
 the convergences established in  Proposition \ref{propaprioriu}  for subsequences,  hold for the complete sequences. 
 
\quad The sequence  $(\bfvarphi_\e)$  will be deduced from a family of sequences $((\bfvarphi_\e^k)_\e)_{k\in \NN}$  by    a diagonalization argument. 
 Given $k\in \NN$, the construction of   $(\bfvarphi_\e^k)_\e$ is based on the choice of an appropriate 
finite partition $( I_j^k)_{ j\in \{1,\ldots,n_k\}}$
  of $(0,L]$ defined as follows: 
  since    the set of the atoms of the measures $\nu$ and $m$ is  at most countable, we   can   fix  a sequence 
  $(A_k)_{k\in \NN }$   of finite subsets  of $[0,L]$  satisfying 
  
  \begin{equation}
  \label{defAk}
  \la \begin{aligned}
  & A_k= \la t^k_0, \, t^k_1,\ldots, t^k_{n_k}\ra, \quad A_k\subset A_{k+1} \quad  \forall k\in \NN, 
  \\& 0  = t^k_0  < t^k_1 < t^k_2 < \ldots <t^k_{n_{k-1} }<  t^k_{n_k} = L,
  \\& \nu\lp\la t_j^k\ra\rp= m\lp\la t_j^k\ra\rp=0 \quad \forall  k\in \NN, \quad \forall j \in \{0,\ldots,n_k\},  
  \\& \lim_{k\to\infty}\sup_{j\in \{1,\ldots,n_k\}}\lb t^k_j-t^k_{j-1}\rb =0.
  \end{aligned}\rpt
  \end{equation}

 Setting 
    \begin{equation}
  \label{defIjk}
  \begin{aligned}
  \\& I^k_j:=\lp t^k_{j -1}, t^k_{j }\right] & & \forall \ k\in \NN, \ \forall j\in \{1,\ldots,n_k\},
  \end{aligned} 
  \end{equation}

 we  introduce the function $\phi^k_\ep:(0,L) \to \RR$ defined by 

  
  \begin{equation}
  \label{defphiek}
  \begin{aligned}
 \phi^k_\ep(x_1) : =   \sum_{j=1}^{n_k} \frac{\nu_\e((t_{j-1}^k, x_1))}{\nu_\e(I^k_{j})} \mathds{1}_{I_j^k}(x_1).
  \end{aligned}
  \end{equation} 
  

Note that the restriction of   $\phi^k_\ep$   to each $I_j^k$ is absolutely continuous, and 
 
 \begin{equation}
\label{phidphi}
 \begin{aligned}
 &\frac{d \phi^k_\e}{d x_1}(x_1)  =\frac{\mu^{-1}_\e(x_1)}{\nu_\ep(I^k_j)}\quad \text{ in }\ \mathring{I_j^k}; \quad  0\le  \phi^k_\e    \le 1\ \hbox{ in } \  (0,L),
 \\&  \phi^k_\e((t_j^k)^-)=1\ \hbox{ and }  \  \phi^k_\e((t_{j-1}^k)^+)=0\quad \forall j\in \{1,\ldots,n_k\}.
  \end{aligned}\end{equation}

   For all $j\in \{0,\ldots,n_k\}$, 
$x\in I_j^k\times \Omega'$, $ \alpha \in \{2,3\}$,  we set (see \eqref{defsigmanu})
 
   \begin{equation}
  \label{defvarphiek}
  \begin{aligned}
 \hspace{-0,1cm} \varphi^k_{\ep 1} (x) &:= \frac{\phi^k_\ep(x_1)}{l+2}   \int_{I^k_{j}} 
 \sigma^\nu_{11}(\bfvarphi) (s_1,x')
d\nu(s_1) 
 \\
  & \hspace{3cm}   - \frac{l}{l+2}\sum_{\a=2}^3  \int_{t^k_{j-1}}^{x_1} \pd{\varphi_\a}{x_\a}(s_1,x')   ds_1  + \varphi^+_1 (t^k_{j-1},x'), 
  \\\hspace{-0,1cm}
  \varphi^k_{\ep \alpha} (x) &\! : =   \!  \phi^k_\ep(x_1) \! \hspace{-0,1cm}\int_{I^k_{j}} \! \hspace{-0,1cm}
    \sigma^\nu_{1\a}(\bfvarphi) (s_1,x')
 d\nu(s_1)    
  - \hspace{-0,1cm}\int_{t^k_{j-1}}^{x_1} \hspace{-0,1cm}\pd{\varphi_1}{x_\alpha}(s_1,x')   ds_1 \!  +\!  \varphi^+_\alpha (t^k_{j-1},x'). 
  \end{aligned}
  \end{equation}
   
%
%

 The  sequence of test fields    $(\bfvarphi_\e)$  is  determined by  the next proposition.


\begin{proposition}\label{propvarphie}  Let $\bfv\in BD_0^{\nu,m}(\Omega)$, $\delta>0$, and $\bfvarphi$, $\bfvarphi_\e^k$   respectively given by \eqref{defvarphi}, \eqref{defvarphiek}.
There exists an increasing sequence $(k_\e)$  of  positive integers  converging to $\infty$  such 
that  $\bfvarphi_\e$ defined by 

\begin{equation} 
  	\label{defvarphie}
  	\begin{aligned}
  	& \bfvarphi_\e:= \bfvarphi_\e^{k_\e},
  	\end{aligned}
  	\end{equation}

 strongly converges to $\bfvarphi$ in $L^1(\O;\RR^3)$ and satisfies the assumptions \eqref{supFefiefini} and \eqref{supfiemefini} of Proposition \ref{propapriori}. 
In particular, the convergences  and relations \eqref{cvfi},  \eqref{cvfinuenu} and \eqref{cvfimem} are satisfied.
 In addition, the following strong convergences in the sense of  \eqref{fetetoftstrong} hold:

		\begin{align}
 	\label{cvfistrong} \bfsigma_{\e}(\bfvarphi_\e)\bfe_1
    \buildrel {\nu_\e\otimes\L^2, \nu\otimes\L^2}\over \rrrrightarrow \bfsigma^\nu(\bfvarphi)\bfe_1,
\quad
   \bfe_{x'}(\bfvarphi_\e)  
    \buildrel {m_\e\otimes\L^2, m\otimes\L^2}\over \rrrrightarrow    \bfe_{x'}((\bfvarphi^\star)'),
  	\end{align}
      
   where $\bfsigma^\nu$ is given by \eqref{defsigmanu}.

 \end{proposition}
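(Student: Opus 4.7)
The plan is to analyze $(\bfvarphi_\e^k)_\e$ at fixed $k$, then study how the resulting $\e\to 0$ limits behave as $k\to\infty$, and finally extract a diagonal subsequence $k_\e$. All three target convergences, $\bfvarphi_\e\to\bfvarphi$ in $L^1$, the strong convergence of $\bfsigma_\e(\bfvarphi_\e)\bfe_1$ against $(\nu_\e\otimes\L^2,\nu\otimes\L^2)$, and the strong convergence of $\bfe_{x'}(\bfvarphi_\e)$ against $(m_\e\otimes\L^2,m\otimes\L^2)$, will be established for $(\bfvarphi_\e^k)_\e$ with a $k$-dependent error vanishing as $k\to\infty$; since all relevant topologies are metrizable on bounded subsets of $\M(\ov\Omega)$, a standard diagonal extraction yields the sequence $(\bfvarphi_\e)$.

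First I would verify that each $\bfvarphi_\e^k$ lies in $W^{1,1}(\Omega;\RR^3)$. Proposition \ref{propmol2} provides enough $x'$-regularity of $\bfvarphi=\bfv^\delta$ so that every integrand in \eqref{defvarphiek} is meaningful, and Lemma \ref{lemhalf} ensures that the traces $\bfvarphi^\pm(t_{j-1}^k,\cdot)$ exist. Combining the boundary conditions \eqref{phidphi} on $\phi_\e^k$ with the representation \eqref{varphi+=} and the choice $\nu(\{t_j^k\})=0$ from \eqref{defAk}, a direct telescoping shows that $\bfvarphi_\e^k$ is continuous across each $\Sigma_{t_j^k}$ and takes the value $\bfvarphi^+(t_j^k,x')$ there. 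The $L^1$-convergence $\bfvarphi_\e^k\to\bfvarphi$ in the limit $\e\to 0$ followed by $k\to\infty$ then follows from the pointwise convergence $\phi_\e^k(x_1)\to\phi^k(x_1):=\nu((t_{j-1}^k,x_1))/\nu(I_j^k)$ on $I_j^k$, Lebesgue dominated convergence, a second application of \eqref{varphi+=}, and the identity $\bfvarphi^+=\bfvarphi$ a.e.\ given by \eqref{fi+=fi}.

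The key computation is that on $I_j^k\times\Omega'$,
\begin{equation*}
(l+2)\mu_\e e_{11}(\bfvarphi_\e^k) = \tfrac{1}{\nu_\e(I_j^k)}\int_{I_j^k}\sigma^\nu_{11}(\bfvarphi)(s_1,x')\,d\nu(s_1) - l\mu_\e\sum_{\a=2}^3\pd{\varphi_\a}{x_\a},
\end{equation*}
with analogous formulas for $2\mu_\e e_{1\a}(\bfvarphi_\e^k)$; this hinges on the identity $\mu_\e\,\frac{d\phi_\e^k}{dx_1}=1/\nu_\e(I_j^k)$ from \eqref{phidphi}. Testing against $\psi\in C(\ov\Omega)$ with respect to $(\nu_\e\otimes\L^2,\nu\otimes\L^2)$, the leading term converges (as $\e\to 0$) to $\sum_j \frac{\mathds{1}_{I_j^k}\,d\nu}{\nu(I_j^k)}\otimes\bigl(\int_{I_j^k}\sigma^\nu_{1i}(\bfvarphi)\,d\nu\bigr)dx'$ and (as $k\to\infty$) to $\sigma^\nu_{1i}(\bfvarphi)\,d\nu\otimes dx'$ by Lebesgue differentiation for $\nu$. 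The $\mu_\e$-bearing remainder in $\bfsigma_\e(\bfvarphi_\e^k)\bfe_1\,d\nu_\e\otimes\L^2$ reduces, via $\mu_\e\,d\nu_\e=d\L^1$, to $l\bigl(\sum_\b\pd{\varphi_{\e\b}^k}{x_\b}-\sum_\a\pd{\varphi_\a}{x_\a}\bigr)d\L^3$, and a parallel telescoping of $\pd{\varphi_{\e\b}^k}{x_\b}$ back to $\pd{\varphi_\b^+}{x_\b}=\pd{\varphi_\b}{x_\b}$ forces cancellation in the joint limit. The analogous analysis for $\bfe_{x'}(\bfvarphi_\e^k)$ against $(m_\e\otimes\L^2,m\otimes\L^2)$ transfers limits across measures via Lemma \ref{lemBoPi}, the no-common-atom assumption \eqref{nocommonatom} playing the decisive role.

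For the strong convergences in \eqref{cvfistrong}, I would apply Cauchy-Schwarz to the leading terms,
\begin{equation*}
\lp\tfrac{1}{\nu_\e(I_j^k)}\int_{I_j^k}\sigma^\nu_{1i}(\bfvarphi)\,d\nu\rp^{\!2}\!\le\tfrac{\nu(I_j^k)}{\nu_\e(I_j^k)^2}\int_{I_j^k}|\sigma^\nu_{1i}(\bfvarphi)|^2\,d\nu,
\end{equation*}
integrate against $d\nu_\e\otimes dx'$, sum over $j$ and let $\e\to 0$ (using $\nu_\e(I_j^k)\to\nu(I_j^k)$ thanks to $\nu(\{t_{j}^k\})=0$) then $k\to\infty$, obtaining the matching upper bound $\int|\bfsigma^\nu(\bfvarphi)\bfe_1|^2\,d\nu\otimes\L^2$ which, combined with the lower-semicontinuity of Lemma \ref{lemfeps}(ii), upgrades weak convergence to strong convergence for the diagonal sequence. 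The main obstacle will be handling the $\mu_\e$-bearing remainder terms: individually they are only bounded (not small) in $L^2_{\nu_\e\otimes\L^2}$, so one must verify that the cancellation between $l\sum_\b\pd{\varphi_{\e\b}^k}{x_\b}$ and $l\sum_\a\pd{\varphi_\a}{x_\a}$ is exact up to an error which is truly $o(1)$ under the joint limit $\e\to 0$, $k\to\infty$, both for the weak-limit identification and for the quadratic norm bound; this relies on a second, uniform-in-$\e$ use of Lebesgue differentiation for $\nu$ applied slab-by-slab.
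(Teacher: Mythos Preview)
Your overall architecture is the paper's: compute $\bfsigma_\e(\bfvarphi_\e^k)\bfe_1$ explicitly, isolate a leading $\nu$-average term plus a remainder $\bfr_\e^k$, bound the leading part by Cauchy--Schwarz exactly as you write, and extract a diagonal. Two points, one stylistic and one substantive.

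First, you propose to establish the weak convergences directly by testing against $\psi\in C(\ov\Omega)$ and invoking Lebesgue differentiation for $\nu$. The paper avoids this entirely: once the diagonal $\bfvarphi_\e$ is constructed, the strong $L^1$ convergence together with the \emph{a priori} bound $\sup_\e\int_\O\mu_\e|\bfe(\bfvarphi_\e)|^2dx<\infty$ (which your Cauchy--Schwarz bound on the leading term and the remainder estimate already give) feeds directly into Proposition~\ref{propapriori}, and \eqref{cvfinuenu}, \eqref{cvfimem} deliver the weak convergences for free. So the weak-limit identification is automatic, and only the $\limsup$ norm bounds need a direct argument. This is a genuine economy you should adopt.

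Second, the mechanism you invoke for the remainder $\bfr_\e^k$ is not the right one. You write that the quadratic smallness of $\bfr_\e^k$ in $L^2_{\nu_\e\otimes\L^2}$ ``relies on a second, uniform-in-$\e$ use of Lebesgue differentiation for $\nu$''. But $\nu_\e$ is not uniformly comparable to $\nu$, so no Lebesgue-differentiation argument in $\nu$ can be made uniform in $\e$. The key identity is rather $\mu_\e^2\,d\nu_\e=dm_\e$, so that
\[
\int_\O|\bfr_\e^k|^2\,d\nu_\e\otimes\L^2=\int_\O\lb\tfrac{\bfr_\e^k}{\mu_\e}\rb^2\,dm_\e\otimes\L^2,
\]
and $\bfr_\e^k/\mu_\e$, read off from your formula, is independent of $\mu_\e$ except through the bounded factor $\phi_\e^k$. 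One then uses the representation \eqref{varphi+=} to bound $\int_{\O'}|\bfr_\e^k/\mu_\e|^2\,dx'$ by $C\nu(I_{j_{x_1}}^k)+C\sup_j\L^1(I_j^k)$, and integrates against $m_\e$. The second term is harmless; for the first, $\int\nu(I_{j_{x_1}}^k)\,dm_\e\to\int\nu(I_{j_{x_1}}^k)\,dm$ as $\e\to0$ (the integrand is a step function on intervals with $m$-null endpoints), and then $\int\nu(I_{j_{x_1}}^k)\,dm\to\int_{\A_\nu}\nu(\{x_1\})\,dm=0$ as $k\to\infty$ precisely because of the no-common-atom hypothesis \eqref{nocommonatom}. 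So the assumption you flagged as decisive only for the $\bfe_{x'}$ part is in fact what makes the $\bfsigma_\e\bfe_1$ remainder vanish as well; Lemma~\ref{lemBoPi} plays no role here.
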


\mn Proposition \ref{propvarphie} will  be proved   in Section \ref{secproofpropvarphike}. 
The next  step 
consists in passing to the limit as $\e\to 0$ in \eqref{IPintro}. 
  Expressing in  \eqref{IPintro}, for $\bfg\in \{\bfu_\e, \bfvarphi_\e\}$,     the scalar fields   $e_{11}(\bfg)$, $\sigma_{\e22}(\bfg)$,  $\sigma_{\e33}(\bfg)$      in terms of  the components of 
 $\bfsigma_\e(\bfg)\bfe_1$   and $\bfe_{x'}(\bfg)$    (the details of this computation are situated at the end of the section), leads to the following equation:

  \begin{equation}
  \label{rearrangement}
  \begin{aligned} 
&
  \int_{\Omega} \tfrac{1}{l + 2}\sigma_{\e11}(\bfu_\e)\sigma_{\e11}(\bfvarphi_\e)
  + \sum_{\a=2}^3   \sigma_{\e1\a}(\bfu_\e)\sigma_{\e1\a}(\bfvarphi_\e)
 \ d\nu_\e\otimes\L^2  
\\&   + \int_\Omega  4 e_{23}(\bfu_\e) e_{23}(\bfvarphi_\e) +  \tfrac{4(l+1)}{l+2} \sum_{\a=2}^{3} e_{\a\a}(\bfu_\e) e_{\a\a}(\bfvarphi_\e)   \, dm_\e\otimes\L^2
\\& \quad +\int_\Omega \tfrac{2l}{l+2}  \Big( e_{22}(\bfu_\e) e_{33}(\bfvarphi_\e) + e_{33}(\bfu_\e) e_{22}(\bfvarphi_\e)   \Big) dm_\e\otimes\L^2=\int_\Omega \bff\cdot \bfvarphi_\e dx.
 \end{aligned} \end{equation}

By \eqref{cvu}, the {\color{black} next}   weak convergences    in the sense of   \eqref{fetetoft}  hold 

	\begin{equation}
  	\label{cvuweak}
	\hskip-0,5cm \begin{aligned}
  &   	\bfsigma_{\e}(\bfu_\e)\bfe_1
 \buildrel {\nu_\e\otimes\L^2, \nu\otimes\L^2}\over \rrrrightharpoonup
  \bfsigma^\nu(\bfu)\bfe_1,
\quad
   \bfe_{x'}(\bfu_\e')  
    \buildrel {m_\e\otimes\L^2, m\otimes\L^2}\over  \rrrrightharpoonup     \bfe_{x'}((\bfu^\star)').
       \end{aligned}	\end{equation}
 
 By passing to the limit as $\e\to 0$ in \eqref{rearrangement}, 
 by virtue of \eqref{cvfistrong} ,  \eqref{cvuweak} and  Lemma \ref{lemfeps} (iii), 
 we obtain

 \begin{equation}
\label{abrut}
\begin{aligned}
&\int_{\Omega} \tfrac{1}{l+2} \sigma^\nu_{11}(\bfu)  \sigma_{11}^\nu(\bfvarphi) 
+   \sum_{\a=2}^{3} 
\sigma^\nu_{1\a}(\bfu)    \sigma_{1\a}^\nu(\bfvarphi)  \   d\nu\otimes\L^2 
\\&
+  \int_{\Omega} 4\, e_{23}(\bfu^\star) e_{23}(\bfvarphi^\star)  +  \tfrac{4(l+1)}{l+2} \sum_{\a=2}^3 e_{\a\a}(\bfu^\star) e_{\a\a}(\bfvarphi^\star)   dm\otimes\L^2
\\&+\int_{\Omega}   \tfrac{2l}{l+2}\big( e_{22}(\bfu^\star)e_{33}(\bfvarphi^\star)+ e_{33}(\bfu^\star)e_{22}(\bfvarphi^\star) \big)  dm\otimes\L^2
=\int_\Omega \bff\cdot\bfvarphi \ dx.
 \end{aligned}
\end{equation}

An elementary computation yields 

 \begin{equation}
\label{reorg}
\begin{aligned}
&\int_{\Omega}\hskip-0,1cm  \tfrac{1}{l+2} \sigma^\nu_{11}(\bfu)  \sigma_{11}^\nu(\bfvarphi) 
\hskip-0,1cm +\hskip-0,1cm    \sum_{\a=2}^{3} 
\sigma^\nu_{1\a}(\bfu)    \sigma_{1\a}^\nu(\bfvarphi)     d\nu\hskip-0,1cm \otimes\hskip-0,1cm \L^2 \hskip-0,1cm 
=\hskip-0,1cm  \int_\O\hskip-0,1cm  \bfa^\bot \tfrac{\bfE\bfu}{\nu\otimes\L^2}: \tfrac{\bfE\bfvarphi}{\nu\otimes\L^2} d\nu\hskip-0,1cm \otimes\hskip-0,1cm \L^2,
\\
&  \int_{\Omega}\hskip-0,1cm  4\, e_{23}(\bfu^\star) e_{23}(\bfvarphi^\star)  +\hskip-0,1cm \int_{\Omega} \hskip-0,1cm   \tfrac{2l}{l+2}\big( e_{22}(\bfu^\star)e_{33}(\bfvarphi^\star)\hskip-0,1cm + e_{33}(\bfu^\star)e_{22}(\bfvarphi^\star) \big)  dm\hskip-0,1cm \otimes\hskip-0,1cm \L^2
\\&\hskip 1cm +  \tfrac{4(l+1)}{l+2}\hskip-0,1cm  \sum_{\a=2}^3 e_{\a\a}(\bfu^\star) e_{\a\a}(\bfvarphi^\star)   dm\otimes\L^2\hskip-0,1cm 
=\hskip-0,1cm \int_\Omega\hskip-0,1cm  \bfa^\Vert\bfe_{x'}(\bfu^\star): \bfe_{x'}(\bfvarphi^\star) \ dm\otimes\L^2,
 \end{aligned}
\end{equation}

where $\bfa^\bot$ and $\bfa^\Vert$ are given by \eqref{defabotaVert}. We infer  from \eqref{abrut} and \eqref{reorg} that 

\begin{equation}
\nonumber
a(\bfu,\bfvarphi)=\int_\Omega \bff\cdot\bfvarphi \ dx, 
\end{equation}

where $a(\cdot,\cdot)$ is the  continuous   symmetric  bilinear  form on $BD^{\nu,m}(\Omega)$ defined  by  \eqref{defa}. 
Substituting $\bfv^\delta$ for $\bfvarphi$ (see  \eqref{defvarphi}) and letting $\delta$ converge to $0$,  we deduce  from the strong convergence  in $BD^{\nu,m}(\O)$ of 
$\bfv^\delta$ to $\bfv$   stated in   \eqref{vdeltaBDnum}  that 
 
\begin{equation}
\nonumber
a(\bfu,\bfv )=\int_\Omega \bff\cdot\bfv \ dx \quad \forall \bfv\in BD^{\nu,m}_0(\Omega).  
\end{equation}

Since, by Proposition \ref{propaprioriu}, the field $\bfu$ belongs to $BD^{\nu,m}_0(\Omega)$, we conclude that $\bfu$ is a solution to \eqref{Peff}.

  \quad 
 Let us prove that $BD_0^{\nu,m}(\Omega)$  is a Hilbert space.
By   the  Poincar\'e inequality in $\la \bfv\in BD(\Omega), \ \bfv=0 \ \hbox{ on } \ \partial \Omega\ra$ (see \cite[Remark 2.5 (ii) p. 156]{Te}),  we have
 
\begin{equation}
\label{norm11}
\begin{aligned}
\int_\Omega |\bfv| dx &\le C \int_\Omega d |\bfE\bfv|=  C \int_\Omega |\tfrac{\bfE\bfv}{\nu\otimes\L^2}| d\nu\otimes\L^2
\\&\le C
\lp  \int_{\Omega}  | \tfrac{\bfE\bfv}{\nu\otimes\L^2}   |^2 \ d\nu\otimes\L^2\rp^{\frac{1}{2}} \le C||\bfv||_{BD_0^{\nu,m}(\Omega)}\quad \forall\ \bfv\in BD_0^{\nu,m}(\Omega),
\end{aligned}
\end{equation}

hence   the semi-norm $||.||_{BD_0^{\nu,m}(\Omega)}$ defined by \eqref{defnormBDnum0}  is a norm on $BD^{\nu,m}_0(\Omega)$.  On the other hand, 
Fubini's Theorem and 
 Korn's inequality in $H^1_0(\Omega';\RR^2)$ imply

\begin{equation}
\label{norm22}
\begin{aligned}
\int_\Omega |(\bfv')^\star |^2 dm\otimes \L^2 &
= \int_0^L dm(x_1) \int_{\Omega'}  |(\bfv')^\star |^2 dx' 
\\&\hskip-2,5cm  \le C \int_0^L dm(x_1) \int_{\Omega'}  |\bfe_{x'}(\bfv^\star) |^2 dx'   \le C ||\bfv||_{BD_0^{\nu,m}(\Omega)}^2\quad \forall\ \bfv\in BD_0^{\nu,m}(\Omega).
\end{aligned}
\end{equation}

 Let  $(\bfv_n)$ be a Cauchy sequence in $BD_0^{\nu,m}(\Omega)$. By  \eqref{norm11} and \eqref{norm22},  the sequences
$(\bfv_n)$,  $((\bfv_n')^\star)$, $(\tfrac{\bfE\bfv_n}{\nu\otimes\L^2})$
are Cauchy sequences in $BD(\Omega)$,  $L^2_{m}(0,L; H^1_0(\Omega;\RR^3)),$
 $L^2_{\nu\otimes\L^2}(\Omega;\SS^3)$  respectively,
hence the following convergences hold

 \begin{equation}
\label{cvCauchy}
\begin{aligned}
&\bfv_n   \to \bfv & & \hbox{strongly in}  \quad BD(\O), 
\\& (\bfv_n')^\star \to \bfw'  & & \hbox{strongly in}  \quad L^2_{m}(0,L; H^1_0(\Omega';\RR^3)),
\\&    \tfrac{\bfE\bfv_n}{\nu\otimes\L^2}    \to \bfXi    & & \hbox{strongly in}  \quad  L^2_{\nu\otimes\L^2}(\Omega;\SS^3),
\end{aligned}
\end{equation}

 for some 
$\bfv$,  $\bfw'$, $\bfXi$. We prove below that 
 
\begin{align}
 &\bfE(\bfv) \ll \nu\otimes \L^2, \quad   \bfXi= \tfrac{\bfE\bfv}{\nu\otimes\L^2}, \quad \bfv=0 \ \hbox{ on } \ \partial \O, 
 \label{vBDnu0}
\\& \bfw'=  (\bfv')^\star  \quad
 m\otimes\L^2 
 \hbox{-a.e..} 
 \label{vstarH1}
\end{align}

It follows from \eqref{cvCauchy}-\eqref{vstarH1} that   $\bfv\in BD^{\nu,m}_0(\Omega)$ and  $(\bfv_n)$ strongly converges to 
$\bfv$ in $BD^{\nu,m}_0(\Omega)$, hence 
$BD^{\nu,m}_0(\Omega)$
 is a Hilbert space. 
   The proof of Theorem \ref{th} is achieved provided we establish that  
 the  form $a(\cdot,\cdot)$ is continous and coercive on $BD^{\nu,m}_0(\Omega)$. The  continuity is straightforward. The coercivity of $a(\cdot,\cdot)$ 
results from Lemma \ref{lemacoercive} stated below.  \qed 

 {\bf Proof of \eqref{vBDnu0}.}   
As $\bfv_n=0$ on $\partial\O$, 
by 
   \eqref{cvCauchy} and Green's formula  we have,   for $\bfPsi \in C^1(\ov\Omega;\SS^3)$,

 \begin{equation} 
\nonumber 
   \begin{aligned}
    \int_\O \bfv \cdot\bfdiv\bfPsi dx & = \lim_{n\to\infty} \int_\O \bfv_n\cdot\bfdiv\bfPsi dx = - \lim_{n\to\infty} \int_\O  \bfPsi d \bfE\bfv_n
 \\&   =  -\lim_{n\to\infty}  \int_\O \tfrac{\bfE\bfv_n}{\nu\otimes\L^2}:\bfpsi d\nu\otimes\L^2= -\int_\O \bfXi :\bfpsi d\nu\otimes\L^2. 
\end{aligned}
\end{equation}

We deduce from Green's formula that 
 \begin{equation} 
\nonumber 
   \begin{aligned}
   - \int_\O \bfPsi : d\bfE(\bfv)  + \int_{\partial \O} \bfv\odot\bfn:\bfPsi d\H^2 = -\int_\O \bfXi :\bfpsi d\nu\otimes\L^2. 
\end{aligned}
\end{equation}

By the arbitrary choice of $\bfpsi$, we infer \eqref{vBDnu0}. 
 \qed


 {\bf Proof of \eqref{vstarH1}.} 
  By \eqref{cvCauchy}, $\lim_{n\to+\infty} \int_\O |(\bfv_n')^\star-\bfw'|^2 dm\otimes \L^2=0$,  hence there exists a $m$-negligible subset 
$N$ of $(0,L)$  such that 

\begin{align}
&  \lim_{n\to+\infty} \int_{\Sigma_{x_1}} |(\bfv_n')^\star-\bfw'|^2 d\H^2=0
\qquad \forall x_1\in (0,L)\setminus N.
\label{Fatou}
\end{align}

  On the other hand, since $(\bfv_n)$ strongly converges to $\bfv$ in $BD(\O)$, 
the traces $\bfgamma_{\Sigma_{x_1}}^\pm(\bfv_n)$
 on both side of  $\Sigma_{x_1}$ strongly converges to  $\bfgamma_{\Sigma_{x_1}}^\pm(\bfv)$  in $L^1_{\H^2}(\Sigma_{x_1})$ for all $x_1\in (0,L)$. 
By  \eqref{traceball}, \eqref{varphi+=varphimae}, and  \eqref{vBDnu0},
 $\bfv^\star(x_1,.)=\bfgamma_{\Sigma_{x_1}}^+(\bfv)=\bfgamma_{\Sigma_{x_1}}^-(\bfv)$ $\H^2$-a.e. on $\Sigma_{x_1}$ for $m$-a.e. $x_1\in(0,L)$. 
Accordingly, there exists a $m$-negligible subset 
$N_1$ of $(0,L)$  such that 
  
   \begin{align}
&   \lim_{n\to+\infty} \int_{\Sigma_{x_1}} |(\bfv_n)^\star-\bfv^\star|  d\H^2 =0 \qquad \forall x_1\in (0,L)\setminus N_1.
\label{Fatou2}
\end{align}

Let us fix $x_1\in (0,L)\setminus (N\cup N_1)$.  By \eqref{Fatou} there exists a subsequence of  $(\bfv_n')^\star$  converging $\H^2$-a.e. on $\Sigma_{x_1}$
to $\bfw' $.  By \eqref{Fatou2}, there exists a further subsequence  converging $\H^2$-a.e. on $\Sigma_{x_1}$
to $(\bfv')^\star$.  Hence $\bfw'=(\bfv')^\star$ $\H^2$-a.e. on $\Sigma_{x_1}$ for  $m$-a.e. $x_1\in (0,L)$. 
Setting  $A:= \{x\in\O, \ \bfw'(x)\not =(\bfv')^\star(x)\}$, $A_{x_1}:= A\cap \Sigma_{x_1}$,   we infer that $\H^2(A_{x_1})=0$ for all $x_1\in (0,L)\setminus (N\cup N_1)$.
It then follows from Fubini's theorem that  $ m\otimes\L^2(A)= \int_{(0,L)} \H^2(A_{x_1}) dm(x_1)=0$.
  \qed


\begin{lemma}\label{lemacoercive} For all $\bfv \in BD^{\nu,m}_0(\Omega)$, $ \a,\b\in \{2,3\}$, we have 

\begin{equation}
  	\label{ubyxi}
	\begin{aligned}
  	\int_{\Omega} \lb \tfrac{E_{\a\b}\bfv}{\nu\otimes\L^2} \rb^2  \ d\nu\otimes\L^2 \le \int_{\Omega} &\lb e_{\a\b}((\bfv^\star)') \rb^2  \ dm\otimes\L^2.
  \end{aligned} 	\end{equation}
 
 \end{lemma}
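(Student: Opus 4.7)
The plan is to prove the inequality \eqref{ubyxi} first for the partial mollification $\bfv^\delta$ provided by Propositions \ref{propmol} and \ref{propmol2}, and then pass to the limit $\delta \to 0^+$ using the strong convergence \eqref{fdeltatof}. The central ingredient is to identify $\left(\tfrac{E_{\a\b}\bfv}{\nu\otimes\L^2}\right)^\delta$ with the classical tangential strain $e_{\a\b}(\bfv^\delta)$ up to the Radon--Nikod\'ym density $\tfrac{\L^1}{\nu}$, and then to compare the measures $\nu\otimes\L^2$ and $m\otimes\L^2$.

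By \eqref{varphiborne}, the classical tangential derivatives $\partial_\a v_\b^\delta$ and $\partial_\b v_\a^\delta$ belong to $L^2(\O)$, so integration by parts against any $\Psi \in \D(\O)$ yields $E_{\a\b}\bfv^\delta = e_{\a\b}(\bfv^\delta)\,\L^3$ as a Radon measure on $\O$ (no jump or Cantor contribution appears in the tangential components, since $\bfv^\delta$ is smooth in $x'$ and any $x_1$-jumps of $\bfv$ have normal $\bfe_1$, contributing zero to the $\a\b$-component). On the other hand, \eqref{vdeltaBD} gives $\bfE\bfv^\delta = \left(\tfrac{\bfE\bfv}{\nu\otimes\L^2}\right)^\delta \nu\otimes\L^2$. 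Equating the $\a\b$-components and invoking Radon--Nikod\'ym---using $\L^1\ll\nu$ from Lemma \ref{lemL1nu}, so that $\tfrac{\L^3}{\nu\otimes\L^2}(x) = \tfrac{\L^1}{\nu}(x_1)$ $\nu\otimes\L^2$-a.e. (and vanishes on the singular part of $\nu$ with respect to $\L^1$)---I deduce
\begin{equation*}
\left(\tfrac{E_{\a\b}\bfv}{\nu\otimes\L^2}\right)^\delta(x) = e_{\a\b}(\bfv^\delta)(x)\,\tfrac{\L^1}{\nu}(x_1) \quad \nu\otimes\L^2\text{-a.e. in } \O,
\end{equation*}
whence
\begin{equation*}
\int_\O \left\vert\left(\tfrac{E_{\a\b}\bfv}{\nu\otimes\L^2}\right)^\delta\right\vert^2 d\nu\otimes\L^2 = \int_\O |e_{\a\b}(\bfv^\delta)|^2 \left(\tfrac{\L^1}{\nu}\right)^2 d\nu\otimes\L^2.
\end{equation*}

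The main obstacle is the measure-theoretic inequality $\left(\tfrac{\L^1}{\nu}\right)^2 \nu \le m$ on $[0,L]$. To prove it, I localize the argument of Lemma \ref{lemL1nu}: for any interval $K=[a,b]$ with $a,b \notin A_\nu \cup A_m \cup \{0,L\}$, the identity $\mu_\e^2 \nu_\e = m_\e$, the weak convergence $\mu_\e \rightharpoonup \tfrac{\L^1}{\nu}$ with respect to $(\nu_\e,\nu)$ restricted to $K$ (obtained as in the proof of Lemma \ref{lemL1nu}), and the lower semicontinuity of Lemma \ref{lemfeps}(ii) give
\begin{equation*}
\int_K \left(\tfrac{\L^1}{\nu}\right)^2 d\nu \le \liminf_{\e\to 0} \int_K \mu_\e^2 d\nu_\e = \liminf_{\e\to 0} m_\e(K) \le m(K),
\end{equation*}
which extends to all Borel subsets of $[0,L]$ by a standard regularity argument (the countably many atomic points of $\nu \cup m$, where $\tfrac{\L^1}{\nu}$ vanishes $\nu$-a.e., cause no obstruction). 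Multiplying this inequality by $\L^2$ on $\O'$ and integrating the nonnegative integrand $|e_{\a\b}(\bfv^\delta)|^2$ then yields
\begin{equation*}
\int_\O |e_{\a\b}(\bfv^\delta)|^2 \left(\tfrac{\L^1}{\nu}\right)^2 d\nu\otimes\L^2 \le \int_\O |e_{\a\b}(\bfv^\delta)|^2 dm\otimes\L^2 = \int_\O \left\vert (e_{\a\b}(\bfv^\star))^\delta\right\vert^2 dm\otimes\L^2,
\end{equation*}
where the last equality uses \eqref{vd+=v+d}--\eqref{vdeltaL2m}, which give $e_{\a\b}(\bfv^\delta) = e_{\a\b}((\bfv^\delta)^\star) = (e_{\a\b}(\bfv^\star))^\delta$ $m\otimes\L^2$-a.e.

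Finally, sending $\delta \to 0^+$ and applying \eqref{fdeltatof} to $\tfrac{E_{\a\b}\bfv}{\nu\otimes\L^2} \in L^2_{\nu\otimes\L^2}(\O)$ on the left and to $e_{\a\b}(\bfv^\star) \in L^2_{m\otimes\L^2}(\O)$ on the right yields the desired inequality \eqref{ubyxi}.
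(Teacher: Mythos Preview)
Your proof is correct and takes a genuinely different route from the paper's. The paper proves the lemma by exploiting the test-field sequence $\bfvarphi_\e$ of Proposition~\ref{propvarphie}: since $\int_\O \mu_\e |e_{\a\b}(\bfvarphi_\e)|^2\,dx$ can be read simultaneously as $\int_\O |\mu_\e e_{\a\b}(\bfvarphi_\e)|^2\,d\nu_\e\otimes\L^2$ and as $\int_\O |e_{\a\b}(\bfvarphi_\e)|^2\,dm_\e\otimes\L^2$, the weak convergence \eqref{cvfinuenu} gives lower semicontinuity on the $\nu$-side while the strong convergence \eqref{cvfistrong} gives convergence on the $m$-side, yielding the inequality for $\bfvarphi=\bfv^\delta$; one then sends $\delta\to0$. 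Your argument bypasses Proposition~\ref{propvarphie} entirely: you instead isolate the one-dimensional measure inequality $\left(\tfrac{\L^1}{\nu}\right)^2\nu\le m$ (obtained by localizing the lower-semicontinuity step of Lemma~\ref{lemL1nu} to intervals with non-atomic endpoints and then extending by outer regularity), combine it with the identity $\left(\tfrac{E_{\a\b}\bfv}{\nu\otimes\L^2}\right)^\delta=e_{\a\b}(\bfv^\delta)\,\tfrac{\L^1}{\nu}$ coming from \eqref{vdeltaBD} and \eqref{varphiborne}, and pass to the limit via \eqref{fdeltatof}. This is more self-contained and exposes the underlying measure-theoretic mechanism clearly; the paper's approach, on the other hand, costs nothing extra since Proposition~\ref{propvarphie} is required anyway for the main theorem. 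One minor point worth making explicit in your write-up: when you compare the integrals against $\left(\tfrac{\L^1}{\nu}\right)^2\nu\otimes\L^2$ and $m\otimes\L^2$, you need a single Borel representative of $e_{\a\b}(\bfv^\delta)$ valid for both measures; the function $e_{\a\b}\bigl((\bfv^\star)^\delta\bigr)$, defined pointwise via the convolution formula in \eqref{fdeltareg}, serves this purpose, agreeing with the weak derivative $\L^3$-a.e.\ and with $(e_{\a\b}(\bfv^\star))^\delta$ $m\otimes\L^2$-a.e.\ by \eqref{vd+=v+d}--\eqref{vdeltaL2m}.
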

 
 \begin{proof}
Let  $\bfv\in BD^{\nu,m}_0(\Omega)$, 
$\delta>0$, and $ \bfvarphi_\e$  defined by \eqref{defvarphi},  \eqref{defvarphie}. 
By Proposition \ref{propvarphie}, the convergence \eqref{cvfinuenu} holds, hence 
by  Lemma \ref{lemfeps} (ii), we have 
   for  $\a,\b \in \{2,3\}$,  
  	$$
   \int_{\Omega}  \lb  \tfrac{E_{\a\b}\bfvarphi}{\nu\otimes\L^2}\rb^2 \ d\nu\otimes\L^2 \le \liminf_{\ep\rightarrow 0}\int_{\Omega} \mu_\ep \lb \bfe_{\a\b}(\bfvarphi_\ep) \rb^2 \ dx .
  	$$ 
As, on the other hand, by \eqref{fetetoftstrong} and \eqref{cvfistrong}, the following holds 
  	$$
  	\lim_{\ep\rightarrow 0}\int_{\Omega} \mu_\ep \lb \bfe_{\a\b}(\bfvarphi_\ep) \rb^2 \ dx = \int_{\Omega} \lb \bfe_{\a\b}((\bfvarphi^\star)') \rb^2 \ dm\otimes\L^2,
  	$$
we deduce that 
$$
\int_{\Omega}  \lb  \tfrac{E_{\a\b}\bfvarphi}{\nu\otimes\L^2} \rb^2 \ d\nu\otimes\L^2 \le \int_{\Omega} \lb \bfe_{\a\b}((\bfvarphi^\star)') \rb^2 \ dm\otimes\L^2 .
$$
Substituting $\bfv^\delta$ for $\bfvarphi$ and 
passing  to the limit  as $\delta \rightarrow 0$,  taking  \eqref{defBDnum}, \eqref{vdeltaBDnum} into account, we obtain   \eqref{ubyxi}. 
\end{proof}

  {\it Justification of \eqref{rearrangement}.} We fix $\bfe, \widetilde\bfe\in \SS^3$ and set $\bfsigma:= l(\tr\bfe) \bfI + 2\bfe$, $\widetilde\bfsigma:= l(\tr\widetilde\bfe) \bfI+ 2\widetilde\bfe$. 
  We have 
  
  \begin{equation}
\label{j1}	\begin{aligned}
  \bfsigma:\widetilde \bfe&
= \sum_{i=1}^3\sigma_{ii}\widetilde e_{ii}+ \sigma_{12}\widetilde \sigma_{12}+  \sigma_{13}\widetilde\sigma_{13} + 4e_{23}\widetilde e_{23}. 
  \end{aligned} 	\end{equation}

Noticing that

   \begin{equation}
  	\nonumber	\begin{aligned}
  &e_{11}=\tfrac{1}{l+2} (\sigma_{11}-l e_{22}-l e_{33}), \quad \widetilde e_{11}=\tfrac{1}{l+2} (\widetilde \sigma_{11}-l \widetilde e_{22}-l\widetilde  e_{33}),
  \\& \sigma_{22}= l e_{11}+ (l+2) e_{22}+ l e_{33}=   \tfrac{l}{l+2} (\sigma_{11}-l e_{22}-l e_{33})+ (l+2) e_{22}+ l e_{33},
    \\& \sigma_{33}= l e_{11}+  l  e_{22}+ (l+2) e_{33}=  \tfrac{l}{l+2} (\sigma_{11}-l e_{22}-l e_{33})+  l  e_{22}+ (l+2) e_{33},
  \end{aligned} 	\end{equation}

we obtain, by substitution,

  \begin{equation}
  	\nonumber	\begin{aligned}
  \sum_{i=1}^3\!\!\sigma_{ii}\widetilde e_{ii}\!&= \!
  \sigma_{11}\tfrac{1}{l+2}(\widetilde\sigma_{11}\!-\!l\widetilde e_{22}\!-\!l\widetilde e_{33}) 
  \!+\! \lp\tfrac{l}{l+2}(\sigma_{11}\!-\!l e_{22}\!-\!l e_{33}) \!+\! (l\!+\!2) e_{22}\! +\! l e_{33}\! \rp \! \widetilde e_{22} 
  \\&\hskip 3cm + \lp \tfrac{l}{l+2}(\sigma_{11}-l e_{22}-l e_{33}) + l e_{22} + (l+2)  e_{33}\rp \widetilde e_{33}
  \\&= \tfrac{1}{l+2} \sigma_{11}\widetilde\sigma_{11} + \tfrac{4(l+1)}{l+2}(e_{22} \widetilde e_{22}+ e_{33} \widetilde e_{33}) + \tfrac{2l}{l+2} (e_{22}\widetilde e_{33}+ e_{33} \widetilde e_{22}),
  \end{aligned} 	\end{equation}

yielding, by \eqref{j1}, 
  
 \begin{equation}
  	\nonumber	\begin{aligned}
  \bfsigma:\widetilde \bfe = \tfrac{1}{l+2}& \sigma_{11}\widetilde\sigma_{11} + + 2\sigma_{12}\widetilde e_{12}+ 2\sigma_{13}\widetilde e_{13}
  \\& + 4e_{23}\widetilde e_{23}   
 + \tfrac{4(l+1)}{l+2}(e_{22} \widetilde e_{22}+ e_{33} \widetilde e_{33}) + \tfrac{2l}{l+2} (e_{22}\widetilde e_{33}+ e_{33} \widetilde e_{22}).
  \end{aligned} 	\end{equation}
  
Substituting   $\bfe(\bfu_\e)$, $\bfe(\bfvarphi_\e)$, $ \tfrac{1}{\mu_\e} \bfsigma_\e(\bfu_\e)$, $ \tfrac{1}{\mu_\e} \bfsigma_\e(\bfvarphi_\e)$, respectively, for 
$\bfe$, $\widetilde \bfe $,   $  \bfsigma$, $  \widetilde\bfsigma$, we infer \eqref{rearrangement}. \qed

  \subsection{Proof of Proposition \ref{propvarphie}}\label{secproofpropvarphike}

The  proof  of Proposition \ref{propvarphie} lies in the  asymptotic analysis of the family of sequences $\lp\lp\bfvarphi_\e^k\rp_\e\rp_{k\in \NN}$,
 the results of which are presented in the next proposition whose proof is located
   in Section \ref{subsecprop5}.


\begin{proposition}\label{propvarphike} 
Let  $\bfv\in BD^{\nu,m}_0(\Omega)$, $\delta>0$,  $\bfsigma^\nu$ defined  by \eqref{defsigmanu}, and  $ \bfvarphi$, $\bfvarphi_\e^k$ 
respectively given   by \eqref{defvarphi}, \eqref{defvarphiek}. 
 Then   $\bfvarphi_\e^k$ belongs to $ H^1(\Omega;\RR^3)$ and satisfies 
 
		\begin{equation}
		\label{supkefikefini}	
		\sup_{k\in \NN; \ \e>0}  \int_\Omega | \bfvarphi^k_\e |^2 dm_\e\otimes \L^2  <\infty,
		\end{equation}  
\begin{equation}
  	\label{varphiektovarphi}
 \lim_{k\to\infty}	\sup_{\e>0}\int_{\Omega} \vert \bfvarphi_\e^k- \bfvarphi \vert  \ dx =0, 
  	\end{equation}
 
\begin{equation}
  	\label{lse11varphiek}
  	\begin{aligned}
  \limsup_{k\to\infty}	 \limsup_{\ep \rightarrow 0}  
	 \int_\O  \lb \bfsigma_\e(\bfvarphi_\e^k)\bfe_1\rb^2 d\nu_\e\otimes\L^2 
	 \le \int_\O \lb \bfsigma^\nu(\bfvarphi)\bfe_1 \rb^2 d\nu\otimes\L^2,
  	  	\end{aligned}
  	\end{equation}
 
\begin{equation}
  	\label{lseabvarphiek}
   \limsup_{k\to\infty} 	 \limsup_{\ep \rightarrow 0} \int_{\Omega}  \left\vert \bfe_{x'}(\lp \bfvarphi_\e^k\rp') \right\vert^2   dm_\e\otimes\L^2 \le	\int_{\Omega} \left\vert \bfe_{x'} ((\bfvarphi^\star)') \right\vert^2  dm\otimes\L^2.
  	\end{equation}

\end{proposition}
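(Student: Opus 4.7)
The construction is tuned so that $\tfrac{d\phi_\e^k}{dx_1}=\mu_\e^{-1}/\nu_\e(I_j^k)$ absorbs the factor $\mu_\e$ appearing in $\bfsigma_\e$, producing averages of the target stress $\bfsigma^\nu(\bfvarphi)$ against $\nu$ over each interval $I_j^k$. First, I would verify regularity: on each slab $I_j^k\times\Omega'$, the function $\bfvarphi_\e^k$ is a sum of products of a $W^{1,\infty}(I_j^k)$-function of $x_1$ (exploiting $\mu_\e^{\pm 1}\in L^\infty(0,L)$) with a $C^\infty(\overline{\Omega'})$-function of $x'$ (since $\bfvarphi=\bfv^\delta$ is partially mollified), the $L^2$-bounds on derivatives being supplied by Proposition \ref{propmol2}. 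Continuity across each interface $\Sigma_{t_j^k}$ is the decisive point: when $x_1\to(t_j^k)^\pm$, $\phi_\e^k$ tends to $1$ or $0$, and in both cases the explicit formula \eqref{defvarphiek} reduces to $\varphi_i^+(t_j^k,x')$ by virtue of the integral identity \eqref{varphi+=} satisfied by $(\bfv^\delta)^+$. Hence $\bfvarphi_\e^k\in H^1(\Omega;\RR^3)$.

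A direct computation from \eqref{defvarphiek} and \eqref{phidphi} then yields the key decomposition
\begin{equation*}
\sigma_{\e 1i}(\bfvarphi_\e^k)(x)=\tfrac{1}{\nu_\e(I_j^k)}\int_{I_j^k}\sigma^\nu_{1i}(\bfvarphi)(s_1,x')\,d\nu(s_1)+\mu_\e(x_1)\,R_{\e,k}^i(x),\qquad x\in I_j^k\times\Omega',
\end{equation*}
where the remainder $R_{\e,k}^i$ involves only derivatives of $\bfv^\delta$ and the partition (no further factor of $\mu_\e$). A parallel decomposition applies to the in-plane strain $e_{\a\b}((\bfvarphi_\e^k)')$: using \eqref{varphi+=} to eliminate the integral $\int_{I_j^k}\sigma^\nu_{1\a}d\nu$, the component $\varphi_{\e\a}^k$ rewrites as a $\phi_\e^k$-interpolation between $\varphi_\a^+(t_{j-1}^k,x')$ and $\varphi_\a^+(t_j^k,x')$ plus $O(|I_j^k|)$ corrections built from $\pd{\varphi_1}{x_\a}$. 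The strong $L^1$-convergence \eqref{varphiektovarphi} then follows from the piecewise-constant-in-$x_1$ approximation of $\bfv^\delta\in L^1(\Omega)$, uniform in $\e$, since the correction terms are bounded in $L^1$ by $\max_j|I_j^k|$ times $L^1$-norms of derivatives of $\bfv^\delta$.

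For the upper bounds, the identity $\int_{I_j^k}\mu_\e^{-1}dx_1=\nu_\e(I_j^k)$ applied to the leading term, combined with Cauchy-Schwarz $(\int_{I_j^k}\sigma^\nu_{1i}d\nu)^2\le\nu(I_j^k)\int_{I_j^k}|\sigma^\nu_{1i}|^2 d\nu$, gives
\begin{equation*}
\int_\Omega\Big|\tfrac{1}{\nu_\e(I_j^k)}\int_{I_j^k}\sigma^\nu_{1i}(\bfvarphi)\,d\nu\Big|^2 d\nu_\e\otimes\L^2\le\sum_{j=1}^{n_k}\tfrac{\nu(I_j^k)}{\nu_\e(I_j^k)}\int_{I_j^k\times\Omega'}|\sigma^\nu_{1i}(\bfvarphi)|^2 d\nu\otimes\L^2.
\end{equation*}
Since $\nu_\e\buildrel\star\over\rightharpoonup\nu$ in $\M([0,L])$ and $\nu(\{t_j^k\})=0$, passing $\e\to 0$ yields the bound $\int_\Omega|\sigma^\nu_{1i}(\bfvarphi)|^2 d\nu\otimes\L^2$, giving \eqref{lse11varphiek}. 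The remainder contributes $\int_\Omega|\mu_\e R_{\e,k}^i|^2 d\nu_\e\otimes\L^2=\int_\Omega|R_{\e,k}^i|^2 dm_\e\otimes\L^2$, which vanishes as $\e\to 0$ then $k\to\infty$ by Lemma \ref{lemBoPi} applied to the step-in-$x_1$, smooth-in-$x'$ structure of $R_{\e,k}^i$ (valid because $\nu$ and $m$ vanish on $\{t_j^k\}$ by \eqref{defAk}). The analogous analysis under $dm_\e\otimes\L^2$, using $\int_{I_j^k}\mu_\e dx_1=m_\e(I_j^k)$ and $|\phi_\e^k|\le 1$, delivers \eqref{lseabvarphiek}; the sup bound \eqref{supkefikefini} follows from $\sup_j m_\e(I_j^k)\le m_\e([0,L])\le C$ combined with the same estimates. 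A diagonal extraction then selects $k_\e$.

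The principal obstacle will be the uniform control of the remainders $\mu_\e R_{\e,k}^i$ in the $L^2_{\nu_\e\otimes\L^2}$-norm. The identity $|\mu_\e R|^2\mu_\e^{-1}=\mu_\e|R|^2$ recasts the problem as an $L^2_{m_\e\otimes\L^2}$-estimate for $R_{\e,k}^i$, after which Lemma \ref{lemBoPi}, whose no-common-atom hypothesis \eqref{nocommonatom} is critically invoked, combined with the partition refinement as $k\to\infty$, makes the remainders vanish in the correct iterated-limit order.
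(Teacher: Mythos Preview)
Your architecture matches the paper's proof closely: the trace-matching via \eqref{varphi+=} for $H^1$-regularity, the decomposition $\bfsigma_\e(\bfvarphi_\e^k)\bfe_1 = (\text{$\nu$-average of }\bfsigma^\nu(\bfvarphi)\bfe_1\text{ over }I_j^k) + \bfr_\e^k$, the Cauchy--Schwarz bound on the leading term, and the conversion $\int |\bfr_\e^k|^2 d\nu_\e\otimes\L^2 = \int |\mu_\e^{-1}\bfr_\e^k|^2 dm_\e\otimes\L^2$ are all exactly what the paper does.

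Two points need correction. First, Lemma \ref{lemBoPi} is not the tool the paper uses for the remainder, and it does not apply cleanly: that lemma requires $w_\e$ bounded in $W^{1,1}(0,L)$, while the squared remainder is built from $\phi_\e^k$ and from piecewise-defined quantities with jumps at the $t_j^k$. The paper instead bounds $\int_{\O'} |\mu_\e^{-1}\bfr_\e^k(x_1,\cdot)|^2\,dx'$ pointwise in $x_1$ by $C\,\nu(I_{j_{x_1}}^k) + C\sup_j|I_j^k|$ --- the factor $\nu(I_{j_{x_1}}^k)$ arises by applying \eqref{varphi+=} to differences such as $\partial_{x_\a}\varphi^+(x_1,x')-\partial_{x_\a}\varphi^+(t_{j-1}^k,x')$ --- and then computes $\int \nu(I_{j_{x_1}}^k)\,dm_\e = \sum_j \nu(I_j^k)\,m_\e(I_j^k)$ directly (Lemma \ref{lemIxx1}). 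Passing $\e\to 0$ uses $m(\{t_j^k\})=0$; passing $k\to\infty$ uses dominated convergence together with $\nu(I_{j_{x_1}}^k)\downarrow\nu(\{x_1\})$ and $m(\A_\nu)=0$, which is where \eqref{nocommonatom} actually enters.

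Second, your treatment of \eqref{lseabvarphiek} as ``analogous'' to \eqref{lse11varphiek} misses a substantive difference. The leading term in $\bfe_{x'}((\bfvarphi_\e^k)')$ is the \emph{pointwise} value $\bfe_{x'}(\bfvarphi^+)(t_{j_{x_1}-1}^k,x')$, not an average, so there is no Cauchy--Schwarz step and no refinement gain from the partition alone. After passing $\e\to 0$ via $m_\e(I_j^k)\to m(I_j^k)$, the $k\to\infty$ limit requires the one-sided $x_1$-continuity of $\bfe_{x'}(\bfvarphi^\pm)$ stated in \eqref{610} --- this is precisely why the mollification $\bfvarphi=\bfv^\delta$ is indispensable --- followed by dominated convergence and the identification $\bfe_{x'}(\bfvarphi^-)=\bfe_{x'}(\bfvarphi^\star)$ $m\otimes\L^2$-a.e.\ via \eqref{phipm=phistar} and \eqref{nocommonatom}.
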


Let us  fix 
a  decreasing sequence of positive reals $(\a_k)_{k\in \NN}$  converging to $0$. 
By Proposition \ref{propvarphike}, there exists  of  a decreasing sequence of positive reals $(\e_k)_{k\in \NN}$ converging to $0$ as $k\to\infty$  and such that, for all $\e<\e_k$, 

\begin{equation}
\label{upstep2}
\hspace{-12 pt}\begin{aligned}
&\int_{\Omega} \vert \bfvarphi_\e^k- \bfvarphi \vert  \ dx \le  \a_k,
 \\&  \int_\O  \lb \bfsigma_\e(\bfvarphi_\e^k)\bfe_1\rb^2 d\nu_\e\otimes\L^2 
	 \le \int_\O \lb \bfsigma^\nu(\bfvarphi)\bfe_1 \rb^2 d\nu\otimes\L^2+  \a_k,
  \\&  \int_{\Omega}  \left\vert \bfe_{x'}( \bfvarphi_\e^k) \right\vert^2   dm_\e\otimes\L^2 \le	\int_{\Omega} \left\vert \bfe_{x'} (\bfvarphi^\star) \right\vert^2  dm\otimes\L^2+  \a_k.
\end{aligned}\end{equation}

%
  	
 Let 
  	$k_\e$ be   the unique integer such that $\e_{k_\e +1}\le \e<\e_{k_\e}$ (notice that $k_\e\to\infty$). We  set

\begin{equation} 
  	\label{defvarphie2}
	\bfvarphi_\e = \bfvarphi^{k_\ep}_\ep.
	\end{equation}
	
	By \eqref{Pe}, \eqref{defmenue}, \eqref{supkefikefini}, 
  \eqref{upstep2} and \eqref{defvarphie2},   the sequence $(\bfvarphi_\e)$ 
 %
%
strongly converges to $\bfvarphi$ in $L^1(\Omega;\RR^3)$ and satisfies the assumptions   \eqref{supFefiefini} and  \eqref{supfiemefini} of  Proposition \ref{propapriori}. Therefore,   the convergences (\ref{cvfinuenu}) and \eqref{cvfimem} hold. We deduce that 
 
  \begin{equation}
\nonumber
	\begin{aligned}
		\bfsigma_\e(\bfvarphi_\e)\bfe_1   \buildrel {\nu_\e\otimes\L^2, \nu\otimes\L^2}\over \rrrrightharpoonup \bfsigma^\nu(\bfvarphi)\bfe_1, 
	\qquad \bfe_{x'} ((\bfvarphi_\e)')  
   \buildrel {m_\e\otimes\L^2, m\otimes\L^2}\over \rrrrightharpoonup   \bfe_{x'}((\bfvarphi^\star )'). 
      \end{aligned}	\end{equation}

On the other hand,   \eqref{upstep2} and   \eqref{defvarphie2}  imply (since $k_\e\to\infty$)

 \begin{equation}
\nonumber
\hspace{-12 pt}\begin{aligned}
& \limsup_{\e\to0}  \int_\O  \lb \bfsigma_\e(\bfvarphi_\e)\bfe_1\rb^2 d\nu_\e\otimes\L^2 
	 \le \int_\O \lb \bfsigma^\nu(\bfvarphi)\bfe_1 \rb^2 d\nu\otimes\L^2,
  \\& \limsup_{\e\to0} \int_{\Omega}  \left\vert \bfe_{x'}\lp\lp \bfvarphi_\e\rp'\rp \right\vert^2   dm_\e\otimes\L^2 \le	\int_{\Omega} \left\vert \bfe_{x'} ((\bfvarphi^\star)') \right\vert^2  dm\otimes\L^2,
\end{aligned}\end{equation}

yielding   \eqref{cvfistrong}. 
 Proposition \ref{propvarphie} is proved provided we establish Proposition  \ref{propvarphike}.  \qed

\mn

 \subsection{Proof of Proposition \ref{propvarphike} } 
 \label{subsecprop5}

 The field $\bfvarphi_\e^k$ belongs to  $H^1(I_j^k\times\O';\RR^3)$  for all $j\in \{1,\ldots,n_k-1\}$,
 hence 
to prove that  $\bfvarphi_\e^k\in H^1(\Omega;\RR^3)$, it suffices to show that 
the traces of  $\bfvarphi_\e^k$ coincide on the common boundaries of 
$I_j^k\times\O'$ and $I_{j+1}^k\times\O'$, that is

\begin{equation}
\label{tracesegales}
\bfgamma_{\Sigma_{t_j^k}}^-(\bfvarphi_\e^k)= \bfgamma_{\Sigma_{t_j^k}}^+(\bfvarphi_\e^k)\quad 
\forall j\in \{1,\ldots,n_k-1\}.
\end{equation}

 By \eqref{varphi+=}, \eqref{defvarphi}, \eqref{defIjk}, the following holds
 
   \begin{equation}
\nonumber
  \begin{aligned}
   \varphi^+_1\! (t^k_{j},x')\!- \! \varphi^+_1 \!(t^k_{j \!-\!1},x')\!=\!
\tfrac{1}{l+2} \hskip-0,1cm\int_{I_j^k} \hskip-0,1cm (\bfsigma^\nu)_{11}(\bfvarphi)(s_1,x')    d\nu&(s_1)
  -\!\sum_{\a=2}^3 \tfrac{l}{l+2} \hskip-0,1cm\int_{I_j^k} \frac{\partial \varphi_\a}{\partial x_\a}(s_1,x') ds_1.
  \end{aligned}
  \end{equation}
 
On the other hand, noticing that  $\phi_\e^k( (t_j^k)^+)=0$ and $\phi_\e^k( (t_j^k)^-)=1$, 
we deduce from  \eqref{defvarphiek}  that, for all $x=(t_j^k,x') \in \Sigma_{t_j^k}$,  
  $(\bfgamma_{\Sigma_{t_j^k}}^+(\bfvarphi_\e^k))_1  (x)
=    \varphi^+_1 (t^k_j,x') $ and 

   \begin{equation}
\nonumber
  \begin{aligned}
&  
(\bfgamma_{\Sigma_{t_j^k}}^-(\bfvarphi_\e^k))_1  (x) =  \tfrac{1}{l+2}   \int_{I^k_{j}} 
 (\bfsigma^\nu)_{11}(\bfvarphi) (s_1,x')
  \ d\nu(s_1)   \\& \hspace{4.8cm} -\sum_{\a=2}^3  \tfrac{l}{l+2} \int_{t^k_{j-1}}^{t_j^k} \pd{\varphi_\a}{x_\a}(s_1,x')   ds_1  + \varphi^+_1 (t^k_{j-1},x').
  \end{aligned}
  \end{equation}
  
  The last  equations imply $(\bfgamma_{\Sigma_{t_j^k}}^+(\bfvarphi_\e^k))_1 = (\bfgamma_{\Sigma_{t_j^k}}^-(\bfvarphi_\e^k))_1$.
Similarly, by \eqref{varphi+=} and  \eqref{defIjk}  we have,   for $\a\in\{2,3\}$, 
 
   \begin{equation}
\nonumber
  \begin{aligned}
   \varphi^+_\a (t^k_{j},x')-  \varphi^+_\a (t^k_{j -1},x')=
 \int_{I_j^k} (\bfsigma^\nu)_{1\alpha}(\bfvarphi )(s_1,x') \ d\nu(s_1) - \int_{I_j^k} \pd{\varphi_1}{x_\alpha}(s_1,x') \ ds_1,
  \end{aligned}
  \end{equation}
  
  and,  by \eqref{defvarphiek}, $(\bfgamma_{\Sigma_{t_j^k}}^+(\bfvarphi_\e^k))_\a
 (t_{j}^k,x')  =    \varphi^+_\a (t^k_j,x')$ and 
    
     \begin{equation}
\nonumber
  \begin{aligned}
&  
(\bfgamma_{\Sigma_{t_j^k}}^-\!(\bfvarphi_\e^k)\!)_\a (t_{j}^k\!,\!x') \!=   \!\!  \int_{I^k_{j}} \!\!\!
 (\bfsigma^\nu( \bfvarphi ) )_{1\a}  (s_1\!,\!x' )
 d\nu(s_1)  \! -\!\! \int_{I_j^k} \! \pd{\varphi_1}{x_\alpha} ( s_1 , x' )  ds_1 \! + \!\varphi^+_\alpha  ( t^k_{j-1}\!,\!x' ),    \end{aligned}
  \end{equation}
  
yielding  $(\bfgamma_{\Sigma_{t_j^k}}^+(\bfvarphi_\e^k))_\a =(\bfgamma_{\Sigma_{t_j^k}}^-(\bfvarphi_\e^k))_\a $. Assertion \eqref{tracesegales} 
is proved.

In what follows, for all $x_1\in(0,L)$, we denote   by $j_{x_1}$ the unique integer satisfying 
  
  \begin{equation}
  \label{defjx1}
  \begin{aligned}
& x_1\in \left( t^k_{j_{x_1}-1}, t^k_{j_{x_1}}\right].
  \end{aligned} 
  \end{equation} 
  
  The next lemma plays a  crucial  role in the  proof of Proposition \ref{propvarphike}.

    \begin{lemma}\label{lemIxx1}
 We have 
  
  \begin{equation}
  \label{nueItonuI}
  \begin{aligned}
  &\lime \nu_\e(  I^k_j)=\nu(  I^k_j)   \hbox{ and }  
  \lim_{\ep \rightarrow 0} m_\ep (I^k_j)  = m(I^k_j) & &\forall k\!\in\! \NN, \  \forall j\!\in\! \{1,\ldots,n_k\}
  .
  \end{aligned}
  \end{equation}

 For all $k\in\NN$, the mapping $ x_1 \in (0,L] \to \nu(I^k_{j_{x_1}})$ defined by \eqref{defIjk}, \eqref{defjx1}  is Borel measurable and satisfies, for all $p\in (0,\infty)$,

  	\begin{equation}
  	\label{limintnuIk=0}
  	\begin{aligned}
	& \lime\int  \nu(I_{j_{x_1}}^k)  dm_\e(x_1) =\int  \nu(I_{j_{x_1}}^k)  dm (x_1),
	\\
  	&  \lim_{k\to \infty}\int_{0}^L \nu(I^k_{j_{x_1}})^p \ d\L^1(x_1)=  0,
  \quad  \lim_{k\to \infty}\int_{[0,L]} \nu(I^k_{j_{x_1}})^p \ dm(x_1)=  0 .
  	\end{aligned}
  	\end{equation}  	
  \end{lemma}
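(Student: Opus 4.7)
My plan is to reduce the lemma to four elementary observations. First, for \eqref{nueItonuI}: by \eqref{defAk} the partition points $t_j^k$ satisfy $\nu(\{t_j^k\}) = m(\{t_j^k\}) = 0$, so neither $\nu$ nor $m$ charges $\partial I_j^k = \{t_{j-1}^k, t_j^k\}$. Combined with the weak* convergences $\nu_\e \weakstarconverge \nu$ and $m_\e \weakstarconverge m$ from \eqref{mueto}, the standard Portmanteau-style characterisation of weak* convergence of Radon measures yields $\nu_\e(I_j^k) \to \nu(I_j^k)$ and $m_\e(I_j^k) \to m(I_j^k)$.

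For the second part, I will first note that $x_1 \mapsto \nu(I_{j_{x_1}}^k)$ is a simple function taking the constant value $\nu(I_j^k)$ on $I_j^k$, hence Borel measurable. The first limit in \eqref{limintnuIk=0} then reduces to
\[
\int \nu(I_{j_{x_1}}^k) \, dm_\e(x_1) = \sum_{j=1}^{n_k} \nu(I_j^k) \, m_\e(I_j^k) \longrightarrow \sum_{j=1}^{n_k} \nu(I_j^k) \, m(I_j^k) = \int \nu(I_{j_{x_1}}^k) \, dm(x_1),
\]
where the passage to the limit uses that this is a finite sum together with the first step.

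The two limits in $k$ are the real content of the lemma. The nested inclusion $A_k \subset A_{k+1}$ from \eqref{defAk} ensures $I_{j_{x_1}}^{k+1} \subseteq I_{j_{x_1}}^k$ for every $x_1 \in (0,L]$, and combined with the mesh condition $\sup_j |I_j^k| \to 0$, this forces $\bigcap_k I_{j_{x_1}}^k = \{x_1\}$. Continuity from above of the finite measure $\nu$ then yields the pointwise convergence $\nu(I_{j_{x_1}}^k) \to \nu(\{x_1\})$ on $(0,L]$. Since the set of atoms $\A_\nu$ of $\nu$ is at most countable, it is $\L^1$-null; moreover, the compatibility condition \eqref{nocommonatom} gives $m(\A_\nu) = \sum_{t \in \A_\nu} m(\{t\}) = 0$. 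Hence $\nu(\{x_1\}) = 0$ both $\L^1$- and $m$-a.e., and thus $\nu(I_{j_{x_1}}^k)^p \to 0$ both $\L^1$- and $m$-a.e. The uniform bound $\nu(I_{j_{x_1}}^k)^p \le \nu([0,L])^p$ together with the finiteness of $\L^1$ and $m$ on $[0,L]$ then allow the dominated convergence theorem to close the argument.

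The only subtle point is the use of \eqref{nocommonatom} in the last step: without this no-common-atoms hypothesis, the atomic part of $\nu$ could contribute positively to $m$, and the second integral in \eqref{limintnuIk=0} would converge to $\sum_{t \in \A_\nu \cap \A_m} \nu(\{t\})^p m(\{t\})$ rather than to zero. All the other steps are routine applications of weak* convergence, continuity from above, and the dominated convergence theorem.
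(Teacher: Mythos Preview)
Your proof is correct and follows essentially the same approach as the paper's: both arguments use that $\nu$ and $m$ do not charge $\partial I_j^k$ to get \eqref{nueItonuI} from weak* convergence, represent $x_1\mapsto\nu(I_{j_{x_1}}^k)$ as a simple function to handle the first line of \eqref{limintnuIk=0} via finite sums, and then combine the monotone intersection $\bigcap_k I_{j_{x_1}}^k=\{x_1\}$ with dominated convergence and the fact that $\L^1(\A_\nu)=m(\A_\nu)=0$ to obtain the second line. Your write-up is slightly more explicit than the paper's in justifying the dominating constant $\nu([0,L])^p$ and the identity $m(\A_\nu)=\sum_{t\in\A_\nu}m(\{t\})$ via countability of $\A_\nu$.
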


  \begin{proof}
  Since  
  $\nu(\partial I^k_j)=m(\partial I^k_j)=0$    for all $k\in \NN$,   $j\in \{1,\ldots,n_k\}$ (see \eqref{defAk}),
  the convergences   \eqref{nueItonuI} result   from (\ref{mueto}).
	By  \eqref{defAk} and (\ref{defjx1}), we have
  	
  	\begin{equation}
  	\label{Ikx1step}
  	\begin{aligned}
  	&  \nu(I^k_{j_{x_1}}) = \sum_{j=1}^{n_k} \nu \lp I^k_{j}\rp
  	\mathds{1}_{I^k_j} (x_1),
  	\end{aligned} 
  	\end{equation}
  	
  	hence the mapping $ x_1 \in (0,L] \to \nu(I^k_{j_{x_1}})$   is Borel-measurable and, by \eqref{nueItonuI},  
	
	\begin{equation}
  	\nonumber
  	\begin{aligned}
	 \lime\int  \nu(I_{j_{x_1}}^k)  dm_\e(x_1) &=\lime \sum_{j=1}^{n_k}  \nu(I_{j }^k)   m_\e (I_{j }^k)= \sum_{j=1}^{n_k}  \nu(I_{j }^k)   m (I_{j }^k)
	\\&=\int  \nu(I_{j_{x_1}}^k)  dm (x_1)
	 .
  	\end{aligned}
  	\end{equation}

%
%
 The measure $\nu$ is bounded and by \eqref{defAk}, for each fixed  $x_1\in (0,L]$, the sequence of sets    $(I^k_{j_{x_1}})_{k \in \NN}$  
is decreasing and 
satisfies 
$\bigcap_{k\in \NN} \downarrow I^k_{j_{x_1}} = \{{x_1}\},
$
therefore   $\lim_{k\to\infty}\nu(I^k_{j_{x_1}})  =\nu(\{{x_1}\})$. 
  Applying the Dominated Convergence Theorem, 
noticing that, by  (\ref{nocommonatom}),  $\L^1(\A_\nu)=m(\A_\nu)=0$  (see \eqref{defAnu}),  
we infer 
 	\begin{equation}
  	\nonumber
  	\begin{aligned}
  	&\lim_{k\to \infty}\int_{0}^L \nu(I^k_{j_{x_1}})^p \ d\L^1(x_1) 
  =  \int_{\A_\nu} \nu(\{x_1\})^p d\L^1(x_1)=0, 
   \\&\lim_{k\to \infty}\int_{[0,L]} \nu(I^k_{j_{x_1}})^p \ dm(x_1) 
	=   \int_{\A_\nu} \nu(\{x_1\})^p dm(x_1)
=0. \end{aligned}
  	\end{equation}
   \end{proof}


  {\bf Proof of \eqref{supkefikefini}.}
By  \eqref{defsigmanu}, \eqref{varphi+=},   \eqref{varphiborne},  \eqref{defvarphi}, 
we have, forall $x_1\in (0,L)$,  
\begin{flalign*}
\int_{\Omega'} \lb\bfvarphi^+ (t^k_{j_{x_1}-1},x') \rb^2 \ dx' & \le C   \int_{\Omega}  \lb \bfsigma^\nu (\bfvarphi) \rb^2 \ d\nu\otimes\L^2  +  C\int_{\Omega} \lb \pd{\bfvarphi }{x_\alpha} \rb^2 \ dx \le C ,
\end{flalign*}
therefore, by   \eqref{phidphi},   \eqref{defvarphiek}, and \eqref{defjx1}, 

\begin{equation}
\nonumber 
  	\begin{aligned}
&\sup_{x_1\in(0,L)}  \int_{\Omega'}  | \bfvarphi^k_{\ep}|^2 (x_1,x')  dx'   
\\&\quad \le  C \lp  \int_\O \!  \lb \bfsigma^\nu (\bfvarphi)\rb^2   \!d\nu\otimes\L^2  \!  +   \! \int_\O\! |  \pd{\bfvarphi }{x_\alpha}|^2  dx +   \!  \int_{\Omega'}    |\bfvarphi^+  (t^k_{j_{x_1}-1},x') |^2 dx'\rp
 \!\le\! C.
\end{aligned} 
  	\end{equation}
	
By integrating    over $(0,L)$ with respect to   $m_\e$,   we obtain  \eqref{supkefikefini}. \qed
   
  {\bf Proof of \eqref{varphiektovarphi}.} 
By  \eqref{varphi+=},  \eqref{varphiborne},  \eqref{defvarphi},
the following estimate holds for $x_1\in I_j^k $ (or equivalently for $j=j_{x_1}$):
 
  \begin{equation}
\label{varphi(x)-varphi(t)}
  	\begin{aligned}
	 \int_{\O'} \hskip-0,2cm | \bfvarphi^+&(x_1,x') -\bfvarphi^+(t_j,x')| dx'
	 \le C \int_{I_j^k\times\O'} \hskip-0,8cm |\bfsigma^\nu(\bfvarphi)| d\nu\otimes\L^2  + C\sum_{\a=2}^3 \int_{I_j^k\times\O'} \hskip-0,2cm | \pd{\bfvarphi}{x_\a}|  d\L^3
	\\& \le C \nu(I_j^k)^{\frac{1}{2}}  
	||\bfsigma^\nu(\bfvarphi)||_{L^2_{\nu\otimes\L^2}
	}^{\frac{1}{2}}
	 + C \lp  \sup_{j\in\{1,\ldots,n_k\}} \L^1(I_j^k)\rp^{\frac{1}{2}}\sum_{\a=2}^3 || \pd{\bfvarphi}{x_\a} ||_{L^2(\O)}^{\frac{1}{2}}
\\&	\le C \nu(I_j^k)^{\frac{1}{2}}  +  C\lp  \sup_{j\in\{1,\ldots,n_k\}} \L^1(I_j^k)\rp^{\frac{1}{2}}.
 \end{aligned} 
  	\end{equation}

 By integration over $(0,L)$ with respect to $\L^1$, taking  \eqref{defAk}, \eqref{defjx1}, \eqref{limintnuIk=0} into account,
 we infer

 \begin{equation}
\label{est1}
  	\begin{aligned}
\lim_{k\to\infty} 	\int_{\O} | \bfvarphi_\e^+(x_1,x')-\bfvarphi_\e^+(t_{j_{x_1}},x')| dx =0.
 \end{aligned} 
  	\end{equation}

By the same argument, we deduce from \eqref{phidphi},  \eqref{defvarphiek} that 

 \begin{equation}
\label{est2}
  	\begin{aligned}
\lim_{k\to\infty} 	\int_{\O} | \bfvarphi_\e^k(x_1,x')-\bfvarphi_\e^+(t_{j_{x_1}},x')| dx =0.
 \end{aligned} 
  	\end{equation}

Assertion \eqref{varphiektovarphi} results from \eqref{est1} and \eqref{est2}. \qed

  {\bf Proof of \eqref{lse11varphiek}.} Taking  \eqref{Pe}, \eqref{defsigmanu}, \eqref{phidphi} and  \eqref{defvarphiek} into account,  an elementary computation yields, for
  all $j\in \{1,\ldots,n_k\}$ and for $\L^3$-a.e.  $x\in I_j^k\times \O'$,

  \begin{equation}
  \label{sigma1}
\begin{aligned}
&
\begin{aligned}
 \bfsigma_\e(\bfvarphi_\e^k)(x)\bfe_1&=\mu_\e\lp l{\rm tr} (\bfe(\bfvarphi_\e^k)) \bfI+ 2\bfe(\bfvarphi_\e^k)\rp\bfe_1
 \\&= \frac{1}{\nu_\e(I_j^k)} \int_{I_j^k} \bfsigma^\nu(\bfvarphi)(s_1,x')  \bfe_1 d\nu(s_1) + \bfr_\e^k(x),
 \end{aligned}
 \end{aligned}
\end{equation}

where for $\a\in \{2,3\}$, 
  
  \begin{equation}
	\label{defrek}
	\begin{aligned}
\hspace{-0,5cm}	&\begin{aligned}
		&\frac{r_{\e1}^k}{\mu_\e}(x)  : = l \sum_{\a=2}^{3} \lp \pd{\varphi^+_{\a}}{x_\a}(t^k_{j-1},x') - \pd{\varphi^+_{\a}}{x_\a}(x_1,x') \rp  \\ 
		& \hspace{0.3cm}+ 2l \phi^k_\e(x_1)\sum_{\a=2}^{3}   \int_{I^k_{j}}  \pd{(\bfsigma^\nu(\bfvarphi))_{1\a}}{x_\a}(s_1,x') d\nu(s_1)   - l  \sum_{\a=2}^{3}  \int_{t^k_{j-1}}^{x_1}  \pd{^2\varphi_{1}}{x^2_{\a}} (s_1,x') ds_1,
	\end{aligned}
\\&\begin{aligned}
  	&   \frac{ r^k_{\e\a}}{\mu_\e}(x)  : =  \frac{1}{l+2} \phi^k_\ep(x_1) \int_{I^k_{j}}   \pd{(\bfsigma^\nu(\bfvarphi))_{11}}{x_\a} (s_1,x') \ d\nu(s_1) 
  \\& \hspace{0.3cm} - \frac{l}{l+2} \sum_{\b=2}^3\int_{t^k_{j-1}}^{x_1} \!\pd{^2\varphi_\b}{x_\b \partial x_\a}(s_1,x')   ds_1 
	 +  \pd{\varphi_1^+}{x_\a}\big(t^{k}_{j-1}, x' \big)  - \pd{\varphi_1}{x_\a}\big(x_1, x' \big). 
\end{aligned}
\end{aligned}
\end{equation}

We prove below that

\begin{equation}
\label{reto0}
	\begin{aligned}
  \limsup_{k\to\infty}	 \limsup_{\ep \rightarrow 0}   \int_\O   |\bfr_\e^k|^2 d \nu_\e\otimes\L^2=0 .
	\end{aligned}
\end{equation}

By   \eqref{defmenue},   \eqref{defIjk} and \eqref{sigma1}, we have
\begin{equation}
\label{eqe1}
	\begin{aligned}
		  \int_\O  \lb \bfsigma_\e(\bfvarphi_\e^k)\bfe_1-\bfr_\e^k\rb^2 \hskip-1,5cm &\hskip 1,5cm d\nu_\e\otimes\L^2  
		\\&= \sum_{j=1}^{n_k} \frac{  \int_{I^k_j}  \mu^{-1}_\ep(x_1)  dx_1  }{(\nu_\ep(I^k_j))^2}  \int_{\Omega'}   
		 \left\vert \int_{I^k_j}  \bfsigma^\nu(\bfvarphi)\bfe_1 (s_1,x')  d\nu(s_1) \right\vert^2  dx'  
		\\& \le \sum_{j=1}^{n_k}   \frac{\nu(I^k_j)}{\nu_\ep(I^k_j)}  \int_{I^k_j\times \Omega'}  \left\vert   \bfsigma^\nu(\bfvarphi)\bfe_1 \right\vert^2 \  d\nu\otimes\L^2.
	\end{aligned}
\end{equation}

Assertion \eqref{lse11varphiek} follows from \eqref{nueItonuI}, \eqref{reto0},  \eqref{eqe1}. \qed

{\it Proof of \eqref{reto0}.} A computation analogous to \eqref{varphi(x)-varphi(t)} yields for $x_1\in I_j^k$, taking \eqref{varphiborne} into account, 

 \begin{equation}
 \label{uno}
  	\begin{aligned}
	 \int_{\O'} \hskip-0,1cm \lb \pd{\bfvarphi^+}{x_\a} (x_1,x') -\pd{\bfvarphi^+}{x_\a}(t_j,x')\rb^2 dx'
	 \le C \nu(I_j^k)  +  C   \sup_{j\in\{1,\ldots,n_k\}}\hskip-0,3cm  \L^1(I_j^k) .
 \end{aligned} 
  	\end{equation}
	
		Similarly, by \eqref{fdeltainLp}, 
	
 \begin{equation}
 \label{due}
  	\begin{aligned}
 \int_{\O'}\lb 	 \int_{I^k_{j}}  \pd{\bfsigma^\nu }{x_\a}(s_1,x') d\nu(s_1) \rb^2 dx'&\le  C\nu(I_j^k)  \lb\lb\pd{\bfsigma^\nu}{x_\a}\rb\rb^2_{L^2_{\nu\otimes\L^2}(\O)} 	 
 \le C \nu(I_j^k)  ,
 \end{aligned} 
  	\end{equation}
	
	 \begin{equation}
 \label{tre}
  	\begin{aligned}
 \int_{\O'}\lb 	\int_{t^k_{j-1}}^{x_1} \!\pd{^2\varphi_\b}{x_\b \partial x_\a}(s_1,x')   ds_1 \rb^2 dx'&\le  C \sup_{j\in\{1,\ldots,n_k\}} \hskip-0,3cm \L^1(I_j^k)  
 \lb\lb\pd{^2\varphi_\b}{x_\b \partial x_\a}\rb\rb^2_{L^2 (\O)} 	
 \\& \le C\sup_{j\in\{1,\ldots,n_k\}} \hskip-0,3cm \L^1(I_j^k)   .
 \end{aligned} 
  	\end{equation}

Collecting \eqref{phidphi},  \eqref{defrek}, \eqref{uno}, \eqref{due}, \eqref{tre}, noticing that $\mu_\e^2\nu_\e=m_\e$, we infer 

 \begin{equation}
 \label{quatro}
  	\begin{aligned}
 \int_\O |\bfr_\e^k|^2 d\nu_\e\otimes\L^2 \le   C\int  \nu(I_{j_{x_1}}^k)  dm_\e(x_1)+  C\sup_{j\in\{1,\ldots,n_k\}}\hskip-0,3cm  \L^1(I_j^k)  m_\e ((0,L)) .
 \end{aligned} 
  	\end{equation}

Assertion \eqref{reto0} results from 	\eqref{defAk}, \eqref{limintnuIk=0}, \eqref{quatro}. \qed

 {\bf Proof of \eqref{lseabvarphiek}.}
By  (\ref{defvarphiek}) we have, for $x_1\in I_j^k$,

  	\begin{equation}
  	\label{ex'=}
  	\begin{aligned}
  	&\bfe_{x'}(\bfvarphi_\e^k)(x)= \bfe_{x'}(\bfvarphi^+)(t_{j-1}^k,x')+ \bfR_\e^k(x),
	\\& \bfR_\e^k(x):= \phi_\e^k(x_1)\int_{I_j^k} \bfe_{x'}\lp \bfsigma^\nu(\bfvarphi)\bfe_1\rp(s_1,x') d\nu(s_1) 
	\\&\hskip4cm - \sum_{\a,\b=2}^3 \int_{t_{j-1}^k}^{x_1} \tfrac{\partial^2\varphi_1}{\partial x_\a\partial x_\a} (s_1,x')ds_1 \bfe_\a\odot\bfe_\b.
	  	\end{aligned}
  	\end{equation}

We deduce from  \eqref{phidphi}, \eqref{due}, \eqref{tre}, \eqref{ex'=},  that $ \int_\O  \lb\bfR_\e^k\rb^2(x)dm_\e$ is bounded from above by the left-hand side of \eqref{quatro}, hence,
by \eqref{defAk}, \eqref{limintnuIk=0}, 

\begin{equation}
  	\label{Reto0}
  	\begin{aligned}
  	\lim_{k\to\infty}  \sup_{\e>0} \int_\O  \lb\bfR_\e^k\rb^2 dm_\e\otimes \L^2 =0.
	  	\end{aligned}
  	\end{equation}

 By   (\ref{defmenue}) and (\ref{defIjk}) we have 
  	
  	\begin{equation}
  	\nonumber
  	\begin{aligned}
  	\int_{\Omega}  \lb \bfe_{x'} (\bfvarphi^+) \rb^2  (t^k_{j_{x_1}-1},x')   \ dm_\e\otimes\L^2 
  	& = \sum_{j=1}^{n_k}  m_\ep(I^k_j)    \int_{\Omega'}   \lb \bfe_{x'} (\bfvarphi^+) \rb^2(t^k_{j-1},x')  \ dx',
  	\end{aligned}
  	\end{equation}

yielding, by    \eqref{nueItonuI},

  	\begin{equation}
  	\label{Sg}
  	\begin{aligned}
  	\lim_{\ep \rightarrow 0}\!\int_{\Omega} \! \lb \bfe_{x'}\! (\bfvarphi^+\!) \!\rb^2  (t^k_{j_{x_1}\!-1},x') &  dm_\e\otimes\L^2 \!
  	  = \!\int_{\Omega}\!\lb  \bfe_{x'} (\bfvarphi^+) \rb^2\!(t^k_{j_{x_1}\!-1},x')  dm\otimes\L^2\!.
  	\end{aligned}
  	\end{equation}

By (\ref{defAk}) and (\ref{defjx1}), for all $x_1\in (0,L)$,  the sequence $(t_{j_{x_1-1}}^k)_{k\in\NN}$ converges to $x_1$ from below as $k \rightarrow \infty$. Therefore, by \eqref{610},  for each $x \in \Omega$ the following holds
\begin{equation}
\label{ptcv}
 \lim_{k \rightarrow \infty} \lb
\bfe_{x'}(\bfvarphi^+)
 \rb^2(t^k_{j_{x_1}-1},x') =
\lb
 \bfe_{x'}(\bfvarphi^-)
 \rb^2(x).
\end{equation}  	

On the other hand, by   \eqref{varphi+=}, $\big\vert \bfe_{x'}(\bfvarphi^+)  \big\vert^2(t^k_{j_{x_1}-1},x')   \le g(x)$, where

\begin{equation}
\begin{aligned}
g(x)\!:=   \!  \!\int_{(0,L)} \hskip-0,5cm \lb  \bfe_{x'}(\bfsigma^\nu\!(\bfvarphi) \bfe_1)
 \rb^2 (s_1,x')  d\nu(s_1) 
\!+  \!\!\sum_{\a,\b=2}^3 \!\int_{0}^{L} \lb\pd{^2\varphi_1}{x_\alpha\partial x_\b}\rb^2\!(s_1,x') ds_1 .
\end{aligned} 
\nonumber
\end{equation}

We deduce from   \eqref{Xiborne} and \eqref{varphiborne} that 
$g\in L^1_{m\otimes\L^2}(\Omega)$, and then from  \eqref{Sg}, \eqref{ptcv} and 
 the Dominated Convergence Theorem, 
 that 

\begin{equation}
\label{limex'varphi}
  	\begin{aligned}
  	\lim_{k \rightarrow \infty}  \int_{\Omega}\big\vert  \bfe_{x'}(\bfvarphi^+)  \big\vert^2(t^k_{j_{x_1}-1},x')  \ dm\otimes\L^2
 &=   \int_{\Omega} \big\vert   \bfe_{x'}(\bfvarphi^-)   \big\vert^2  \ dm\otimes\L^2.
  	\end{aligned}
  	\end{equation}
	
By \eqref{nocommonatom} and \eqref{decEfi} we have   $|\bfE\bfvarphi|(\Sigma_{x_1})=0$ for $m$-a.e. $x_1\in (0,L)$, 
therefore 	Assertion \eqref{phipm=phistar} implies that 
 $ \bfe_{x'}(\bfvarphi^-)= \bfe_{x'}(\bfvarphi^\star)$ $m\otimes\L^2$-a.e..
 Collecting \eqref{ex'=},  \eqref{Reto0}, \eqref{Sg}, \eqref{limex'varphi}, and the last equation,  the  assertion \eqref{lseabvarphiek}  is proved.\qed

\subsection{Sketch proof of Proposition \ref{thsystem} }\label{secsketchproofthsystem}
Repeating the argument of the proof of  Proposition \ref{propaprioriu}, we establish the apriori estimates 
	\begin{equation*} 
	\begin{aligned}
	&\sup_{\e>0}  \int_\Omega |\bfu_\e|^2 dm_\e\otimes \L^2 + \int_\Omega |\bfu_\e| dx+  \int_\Omega \mu_\ep \left\vert \bfnabla\bfu_\e \right\vert^2 dx
	<\infty, 
	\end{aligned}
	\end{equation*}
	
and deduce, up to a subsequence,  
the following convergences (analogous to \eqref{cvu})
\begin{equation}
\label{cvuweakheat}
\begin{aligned}
 &\bfu_\e \ \buildrel\star\over \rightharpoonup  \bfu  \quad   \hbox{weakly*  in }  BV(\Omega;\RR^n)    \hbox{ for some } 
	\bfu\in BV^{\nu,m}_0(\Omega),
\\& \mu_\e  (\hbox{\rm\bfC}\bfnabla \bfu_\e)\bfe_1
\buildrel {\nu_\e\otimes\L^2, \nu\otimes\L^2}\over \rrrrightharpoonup
 (\hbox{\rm\bfC}\tfrac{\bfD\bfu \ \ }{\nu\otimes\L^{d-1}})\bfe_1,
\quad  \bfnabla_{x'}\bfu_\e  
\buildrel {m_\e\otimes\L^2, m\otimes\L^2}\over \rrrrightharpoonup    \bfnabla_{x'}\bfu^\star,
\end{aligned}	\end{equation}

where $BV^{\nu,m}_0(\Omega)$ and 
$\bfnabla_{x'} \bfv$  are defined by  \eqref{defBVnum0} and  \eqref{defnablax'Xi}.
Fixing  $\bfv \in BV^{\nu,m}_0(\Omega)$, $\delta>0$, $k\in \NN^*$,  we set    $\bfvarphi = \bfv^\delta$ and  

\begin{equation}
\nonumber
\begin{aligned}
\bfvarphi^k_{\ep } (x) &:=  \left(  \int_{I^k_{j_{x_1}}}(\bfT^{-1} \hbox{\rm\bfC\,}\tfrac{\bfD\bfvarphi \ \ }{\nu\otimes\L^{d-1}})\bfe_1 (s_1,x') \ d\nu(s_1) \right)  \phi^k_\ep(x_1) \\
& \hspace{1.8cm} -   \int_{t^k_{j_{x_1}-1}}^{x_1} (\bfT^{-1} \hbox{\rm\bfC\,}\bfnabla_{x'}\bfvarphi)\, \bfe_1 (s_1,x')   ds_1  + \bfvarphi^+ (t^k_{j_{x_1}-1},x'). 
\end{aligned}
\end{equation}

Mimicking   propositions \ref{propvarphie} and \ref{propvarphike}, we exhibit  a sequence   $\bfvarphi_\e(=\bfvarphi_\e^{k_\e})$ 
satisfying 

\begin{equation}
\label{cvfistrongheat}
\begin{aligned}
& \hspace{3.5cm}\lim_{\ep \rightarrow 0} \int_{\Omega} \vert \bfvarphi_\e- \bfvarphi \vert  \ dx=0, \hspace{2.2cm} \\
& \mu_\e  (\hbox{\rm\bfC\,} \bfnabla \bfvarphi_\e )\bfe_1 \buildrel{\nu_\e\otimes\L^2, \nu \otimes\L^2}\over  \rrrrightarrow   (\hbox{\rm\bfC} \tfrac{\bfD\bfvarphi \ \ }{\nu\otimes\L^{d-1}})\bfe_1, \qquad  \bfnabla_{x'} \bfvarphi_\e \buildrel{m_\e\otimes\L^2, m \otimes\L^2}\over  \rrrrightarrow \bfnabla_{x'}\bfvarphi^\star.
\end{aligned}
\end{equation}

Multiplying  \eqref {PeSyst}  by $\bfvarphi_\e$,  integrating  by parts, and applying     the formula 
\begin{equation}
\label{rearrangementsystem}
\begin{aligned}
\hbox{\rm\bfC}\bfnabla\bfu_\e : \bfnabla \bfvarphi_\e \hskip-0,1cm =  \hskip-0,1cm(\bfT^{-1} \hbox{\rm\bfC\,}\bfnabla\bfu_\e) \bfe_1 \! \cdot \!(\hbox{\rm\bfC}\bfnabla\bfvarphi_\e) \bfe_1  - (\bfT^{-1}\hbox{\rm\bfC\,}\bfnabla_{x'}\bfu_\e)\, \bfe_1 \cdot(\hbox{\rm\bfC}\bfnabla_{x'}\bfvarphi_\e)\, \bfe_1  \\
&\hspace{-90pt} +\hbox{\rm\bfC\,}\bfnabla_{x'}\bfu_\e : \bfnabla_{x'}\bfvarphi_\e,
\end{aligned}
\end{equation}

proved below, we obtain 

\begin{equation}
\nonumber
\begin{aligned}
 \hskip-1,5cm \int_\Omega \bff \cdot \bfvarphi_\e dx =   
&\int_\Omega \mu_\e (\bfT^{-1}\hbox{\rm\bfC\,} \bfnabla \bfu_\e) \bfe_1  \cdot \mu_\e (\hbox{\rm\bfC\,}\bfnabla\bfvarphi_\e)\bfe_1 \ d\nu_\e\otimes\L^2
\\&\hskip-1,5cm +  \int_\Omega -(\bfT^{-1}\hbox{\rm\bfC\,} \bfnabla_{x'} \bfu_\e ) \bfe_1 \cdot   (\hbox{\rm\bfC} \bfnabla_{x'} \bfvarphi_\e ) \bfe_1 +\hbox{\rm\bfC\,} \bfnabla_{x'} \bfu_\e  \cdot    \bfnabla_{x'} \bfvarphi_\e \ dm_\e\otimes \L^2.
\end{aligned}
\end{equation}

\ni Passing to the limit as  $\e\to 0$ in accordance with   \eqref{cvuweakheat} and  \eqref{cvfistrongheat}, we find
$$
a( \bfu,\bfvarphi)=\int_\Omega  \bfu \cdot \bfvarphi\ dx,
$$ 
where

 \begin{equation}
\nonumber
 \begin{aligned}
\hskip-0,3cm  a(\bfu,\bfvarphi) : =   
 &  \int_\Omega    ({\bfT^{-1} } \hbox{\rm\bfC\,} \tfrac{\bfD\bfu \ \ }{\nu\otimes\L^{d-1}}) \bfe_1    \cdot (\hbox{\rm\bfC\,}\tfrac{\bfD\bfvarphi) \ \ }{\nu\otimes\L^{d-1}})\bfe_1   \ d\nu \otimes \L^{d-1} 
 \\&-   \int_\Omega      \hskip-0,2cm  ({ \bfT^{-1} } \hbox{\rm\bfC\,} \bfnabla_{x'} \bfu^\star) \bfe_1    \!\cdot\!
(\hbox{\rm\bfC} \bfnabla_{x'} \bfvarphi)^\star ) \bfe_1   + \hbox{\rm\bfC\,} \bfnabla_{x'} \bfu^\star \!   :   \! \bfnabla_{x'} \bfvarphi)^\star \ dm \otimes  \L^{d-1}.
 \end{aligned}
 \end{equation} 

An elementary computation  shows that $a(\cdot,\cdot)$ is also given by \eqref{defaSyst}.
The rest of the proof is similar to that of Theorem \ref{th}.

{\bf Proof of \eqref{rearrangementsystem}.}
\def\tC {\hbox{\rm\bfC}}
 Noticing that  $\bfT$ defined by \eqref{defT} 
  satisfies
$$
(\bfT\bfnabla\bfv)\bfe_1
= 
  (\hbox{\rm\bfC}\bfnabla\bfv )\,\bfe_1 - (\hbox{\rm\bfC}\bfnabla_{x'}\bfv)\,\bfe_1  ,
$$
and taking the invertibility of $\bfT$ and   the symmetry  of $\bfT^{-1}$ and $\tC $ into account,  we obtain  

\begin{equation}
\nonumber
\begin{aligned}
\tC  \bfnabla\bfu \!:\!  \bfnabla \bfv &=  (\tC  \bfnabla \bfu)\bfe_1\!\cdot\!(\bfnabla\bfv)\bfe_1
+ \tC \bfnabla \bfu \!:\! \bfnabla_{x'}\bfv=\!   (\tC  \! \bfnabla \bfu)\bfe_1\!\cdot\! (\bfnabla\bfv)\bfe_1
\! + \!  \bfnabla \bfu \!:\! \tC \! \bfnabla_{x'}\bfv
\\& = 
 (\tC  \bfnabla \bfu)\bfe_1\!\cdot\! (\bfnabla\bfv)\bfe_1
+   (\bfnabla \bfu)\bfe_1  \!\cdot\!  (\tC \bfnabla_{x'}\bfv)\bfe_1 + \bfnabla_{x'}\bfu \!:\! \tC \bfnabla_{x'}\bfv
\\& = 
(\tC  \bfnabla \bfu)\bfe_1\!\cdot\! \bfT^{-1}((\tC \bfnabla\bfv)\bfe_1-(\tC \bfnabla_{x'}\bfv)\bfe_1)
\\&\hskip 1,3cm +  \bfT^{-1}((\tC \bfnabla\bfu)\bfe_1-(\tC \bfnabla_{x'}\bfu)\bfe_1)  \!\cdot\!  (\tC \bfnabla_{x'}\bfv)\bfe_1 + \bfnabla_{x'}\bfu \!:\! \tC \bfnabla_{x'}\bfv
\\& = \! 
(\bfT^{-1}\tC \bfnabla \bfu)\bfe_1\!\cdot\! ( \tC  \bfnabla\bfv)\bfe_1 
 \! -(\bfT^{-1}\!  \tC \bfnabla_{x'}\bfu)\bfe_1  \!\cdot\!  (\tC  \bfnabla_{x'}\bfv)\bfe_1\!  +\!  \bfnabla_{x'}\bfu \!:\! \tC  \bfnabla_{x'}\bfv.
\end{aligned}
\end{equation}
\qed

\section*{Acknowledgements}
 The work of Michel Bellieud and Shane Cooper  was financed by the French National Research Agency (ANR)(Project ``Blanc", ANR-13-BS03-0009-03). The work of Shane Cooper was also financed by the Engineering and Physical Sciences Research Council (EPSRC)(Project ``Operator asymptotics, a new approach to length-scale interactions in metamaterials", grant EP/M017281/1).  

\end{document}